\begin{document}
%
\theoremstyle{plain}
\swapnumbers
    \newtheorem{thm}[figure]{Theorem}
    \newtheorem{prop}[figure]{Proposition}
    \newtheorem{lemma}[figure]{Lemma}
    \newtheorem{keylemma}[figure]{Key Lemma}
    \newtheorem{corollary}[figure]{Corollary}
    \newtheorem{fact}[figure]{Fact}
    \newtheorem{subsec}[figure]{}
    \newtheorem*{propa}{Proposition A}
    \newtheorem*{thma}{Theorem A}
    \newtheorem*{thmb}{Theorem B}
    \newtheorem*{thmc}{Theorem C}
    \newtheorem*{thmd}{Theorem D}
\theoremstyle{definition}
    \newtheorem{defn}[figure]{Definition}
    \newtheorem{examples}[figure]{Examples}
    \newtheorem{notn}[figure]{Notation}
    \newtheorem{summary}[figure]{Summary}
\theoremstyle{remark}
        \newtheorem{remark}[figure]{Remark}
        \newtheorem{remarks}[figure]{Remarks}
        \newtheorem{example}[figure]{Example}
        \newtheorem{warning}[figure]{Warning}
    \newtheorem{assume}[figure]{Assumption}
    \newtheorem{ack}[figure]{Acknowledgements}
\renewcommand{\thefigure}{\arabic{section}.\arabic{figure}}
%
%
%
\newenvironment{myeq}[1][]
{\stepcounter{figure}\begin{equation}\tag{\thefigure}{#1}}
{\end{equation}}
\newcommand{\myeqn}[2][]
{\stepcounter{figure}\begin{equation}
     \tag{\thefigure}{#1}\vcenter{#2}\end{equation}}
\newcommand{\mydiag}[2][]{\myeq[#1]\xymatrix{#2}}
\newcommand{\mydiagram}[2][]
{\stepcounter{figure}\begin{equation}
     \tag{\thefigure}{#1}\vcenter{\xymatrix{#2}}\end{equation}}
\newcommand{\mysdiag}[2][]
{\stepcounter{figure}\begin{equation}
     \tag{\thefigure}{#1}\vcenter{\xymatrix@R=10pt@C=20pt{#2}}\end{equation}}
\newcommand{\mytdiag}[2][]
{\stepcounter{figure}\begin{equation}
     \tag{\thefigure}{#1}\vcenter{\xymatrix@R=15pt@C=25pt{#2}}\end{equation}}
\newcommand{\myudiag}[2][]
{\stepcounter{figure}\begin{equation}
     \tag{\thefigure}{#1}\vcenter{\xymatrix@R=15pt@C=20pt{#2}}\end{equation}}
\newcommand{\myrdiag}[2][]
{\stepcounter{figure}\begin{equation}
     \tag{\thefigure}{#1}\vcenter{\xymatrix@R=20pt@C=15pt{#2}}\end{equation}}
\newcommand{\myqdiag}[2][]
{\stepcounter{figure}\begin{equation}
     \tag{\thefigure}{#1}\vcenter{\xymatrix@R=20pt@C=22pt{#2}}\end{equation}}
\newcommand{\myfigure}[2][]
{\stepcounter{figure}\begin{equation}
     \tag{\thefigure}{#1}\vcenter{#2}\end{equation}}
\newcommand{\mywdiag}[2][]
{\stepcounter{figure}\begin{equation}
     \tag{\thefigure}{#1}\vcenter{\xymatrix@R=20pt@C=12pt{#2}}\end{equation}}
\newcommand{\myzdiag}[2][]
{\stepcounter{figure}\begin{equation}
     \tag{\thethm}{#1}\vcenter{\xymatrix@R=5pt@C=20pt{#2}}\end{equation}}
%
\newenvironment{mysubsection}[2][]
{\begin{subsec}\begin{upshape}\begin{bfseries}{#2.}
\end{bfseries}{#1}}
{\end{upshape}\end{subsec}}
\newenvironment{mysubsect}[2][]
{\begin{subsec}\begin{upshape}\begin{bfseries}{#2\vsn.}
\end{bfseries}{#1}}
{\end{upshape}\end{subsec}}
\newcommand{\supsect}[2]
{\vspace*{-5mm}\quad\\\begin{center}\textbf{{#1}}\vsm.~~~~\textbf{{#2}}\end{center}}
\newcommand{\sect}{\setcounter{figure}{0}\section}
%
%
\newcommand{\wh}{\ -- \ }
\newcommand{\wwh}{-- \ }
\newcommand{\w}[2][ ]{\ \ensuremath{#2}{#1}\ }
\newcommand{\ww}[1]{\ \ensuremath{#1}}
\newcommand{\www}[2][ ]{\ensuremath{#2}{#1}\ }
\newcommand{\wwb}[1]{\ \ensuremath{(#1)}-}
\newcommand{\wb}[2][ ]{\ (\ensuremath{#2}){#1}\ }
\newcommand{\wref}[2][ ]{\ (\ref{#2}){#1}\ }
\newcommand{\wwref}[2]{\ (\ref{#1})-(\ref{#2})\ }
%
%
\newcommand{\hsp}{\hspace*{9 mm}}
\newcommand{\hs}{\hspace*{5 mm}}
\newcommand{\hsn}{\hspace*{1 mm}}
\newcommand{\hsm}{\hspace*{2 mm}}
\newcommand{\vsn}{\vspace{2 mm}}
\newcommand{\vs}{\vspace{5 mm}}
\newcommand{\vsm}{\vspace{3 mm}}
\newcommand{\vsp}{\vspace{9 mm}}
%
%
\newcommand{\hra}{\hookrightarrow}
\newcommand{\xra}[1]{\xrightarrow{#1}}
\newcommand{\xepic}[1]{\xrightarrow{#1}\hspace{-5 mm}\to}
\newcommand{\lora}{\longrightarrow}
\newcommand{\lra}[1]{\langle{#1}\rangle}
\newcommand{\llrra}[1]{\langle\langle{#1}\rangle\rangle}
\newcommand{\llrr}[2]{\llrra{#1}\sb{#2}}
\newcommand{\llrrp}[2]{\llrra{#1}'\sb{#2}}
\newcommand{\lrf}{\langle\langle f\lo{0,1}\rangle\rangle}
\newcommand{\lrfn}[1]{\lrf\sb{#1}}
\newcommand{\lras}[1]{\langle{#1}\rangle\sb{\ast}}
\newcommand{\lrau}[1]{\langle{#1}\rangle\sp{\ast}}
\newcommand{\vlam}{\vec{\lambda}}
\newcommand{\epic}{\to\hspace{-3.5 mm}\to}
\newcommand{\xhra}[1]{\overset{#1}{\hookrightarrow}}
\newcommand{\efp}{\to\hspace{-1.5 mm}\rule{0.1mm}{2.2mm}\hspace{1.2mm}}
\newcommand{\efpic}{\mbox{$\to\hspace{-3.5 mm}\efp$}}
\newcommand{\up}[1]{\sp{(#1)}}
\newcommand{\bup}[1]{\sp{[{#1}]}}
\newcommand{\lo}[1]{\sb{(#1)}}
\newcommand{\lolr}[1]{\sb{\lra{#1}}}
\newcommand{\bp}[1]{\sb{[#1]}}
\newcommand{\hfsm}[2]{{#1}\ltimes{#2}}
\newcommand{\sms}[2]{{#1}\wedge{#2}}
\newcommand{\rest}[1]{\lvert\sb{#1}}
\newcommand{\adj}[2]{\substack{{#1}\\ \rightleftharpoons \\ {#2}}}
%
%
\newcommand{\ab}{\operatorname{ab}}
\newcommand{\Ab}{\operatorname{Ab}}
\newcommand{\Aut}{\operatorname{Aut}}
\newcommand{\Cf}{\C\sb{\operatorname{f}}}
\newcommand{\Cof}[1]{\operatorname{Cof}(#1)}
\newcommand{\Coker}{\operatorname{Coker}}
\newcommand{\colim}{\operatorname{colim}}
\newcommand{\colimit}[1]
{\raisebox{-1.7ex}{$\stackrel{\textstyle\colim}{\scriptstyle{#1}}$}}
\newcommand{\comp}{\mbox{\sf comp}}
\newcommand{\Cone}{\operatorname{Cone}}
\newcommand{\csk}[1]{\operatorname{csk}\sp{#1}}
\newcommand{\diag}{\operatorname{diag}}
\newcommand{\ev}{\operatorname{ev}}
\newcommand{\exc}{\operatorname{ex}}
\newcommand{\Ext}{\operatorname{Ext}}
\newcommand{\Fib}{\operatorname{Fib}}
\newcommand{\fin}{\operatorname{fin}}
\newcommand{\fwd}{\operatorname{fwd}}
\newcommand{\gr}{\operatorname{gr}}
\newcommand{\ho}{\operatorname{ho}}
\newcommand{\hocofib}{\operatorname{hocofib}}
\newcommand{\hocolim}{\operatorname{hocolim}}
\newcommand{\holim}{\operatorname{holim}}
\newcommand{\Hom}{\operatorname{Hom}}
\newcommand{\inc}{\operatorname{inc}}
\newcommand{\Id}{\operatorname{Id}}
\newcommand{\Image}{\operatorname{Im}}
\newcommand{\Ker}{\operatorname{Ker}}
\newcommand{\md}{\operatorname{mid}}
\newcommand{\Ob}{\operatorname{Obj}}
\newcommand{\op}{\sp{\operatorname{op}}}
\newcommand{\pt}{\operatorname{pt}}
\newcommand{\red}{\operatorname{red}}
\newcommand{\sgn}[1]{\operatorname{sgn}(#1)}
\newcommand{\sk}[1]{\operatorname{sk}\sb{#1}}
\newcommand{\SL}[1]{\operatorname{SL}\sb{#1}(\ZZ)}
\newcommand{\Sq}[1]{\operatorname{Sq}\sp{#1}}
\newcommand{\Tot}{\operatorname{Tot}}
\newcommand{\uTot}{\underline{\Tot}}
%
%
\newcommand{\map}{\operatorname{map}}
\newcommand{\mapa}{\map\sb{\ast}}
%
%
\newcommand{\Hu}[3]{H\sp{#1}({#2};{#3})}
\newcommand{\Hus}[2]{\Hu{\ast}{#1}{#2}}
\newcommand{\HuF}[2]{\Hu{#1}{#2}{\Fp}}
\newcommand{\HuFs}[1]{\HuF{\ast}{#1}}
\newcommand{\HuT}[2]{\Hu{#1}{#2}{\Ft}}
\newcommand{\HuTs}[1]{\HuT{\ast}{#1}}
%
%
\newcommand{\Hi}[3]{H\sb{#1}({#2};{#3})}
\newcommand{\His}[2]{\Hi{\ast}{#1}{#2}}
\newcommand{\HiF}[2]{\Hi{#1}{#2}{\Fp}}
\newcommand{\HiZ}[2]{\Hi{#1}{#2}{\ZZ}}
\newcommand{\HiZp}[2]{\Hi{#1}{#2}{\Zp}}
\newcommand{\HiQ}[2]{\Hi{#1}{#2}{\QQ}}
\newcommand{\HiFs}[1]{\His{#1}{\Fp}}
\newcommand{\HiT}[2]{\Hi{#1}{#2}{\Ft}}
\newcommand{\HiTs}[1]{\HiT{\ast}{#1}}
%
%
\newcommand{\Ei}[3]{E\sb{#1}\sp{{#2},{#3}}}
\newcommand{\Eis}[2]{E\sb{#1}(#2)}
\newcommand{\Eu}[3]{E\sp{#1}\sb{{#2},{#3}}}
\newcommand{\Eus}[1]{E\sp{#1}}
\newcommand{\Euz}[1]{E\sp{0}({#1})}
\newcommand{\Eot}[2]{\Eu{1}{#1}{#2}}
\newcommand{\Ett}[2]{\Eu{2}{#1}{#2}}
%
%
\newcommand{\A}{\mathcal{A}}
\newcommand{\tA}{\widetilde{A}}
\newcommand{\wA}{\widehat{A}}
\newcommand{\B}{\mathcal{B}}
\newcommand{\wB}{\widehat{B}}
\newcommand{\C}{\mathcal{C}}
\newcommand{\wC}{\widehat{C}}
\newcommand{\hC}{\hat{\C}}
\newcommand{\D}{\mathcal{D}}
\newcommand{\wD}{\widehat{D}}
\newcommand{\tD}{\widetilde{D}}
\newcommand{\E}{\mathcal{E}}
\newcommand{\wE}{\widehat{E}}
\newcommand{\cF}[1]{\mathcal{F}\sp{#1}}
\newcommand{\Fc}[1]{\mathcal{F}\sb{#1}}
\newcommand{\wf}{\widehat{f}}
\newcommand{\G}{\mathcal{G}}
\newcommand{\cII}{\mathcal{I}}
\newcommand{\cI}[1]{\cII\sp{#1}}
\newcommand{\hI}{\widehat{I}}
\newcommand{\hhI}{\widehat{\hI}}
\newcommand{\uI}{\underline{I}}
\newcommand{\wI}[1]{\hI\sp{#1}}
\newcommand{\hJ}{\widehat{J}}
\newcommand{\hhJ}{\widehat{\hJ}}
\newcommand{\uJ}{\underline{J}}
\newcommand{\hK}{\widehat{K}}
\newcommand{\uK}{\underline{K}}
\newcommand{\hhK}{\widehat{\hK}}
\newcommand{\KK}[1]{{\mathcal{K}}\sb{#1}}
\newcommand{\hL}{\widehat{L}}
\newcommand{\hhL}{\widehat{\hL}}
\newcommand{\LL}{{\mathcal L}}
\newcommand{\M}{\mathcal{M}}
\newcommand{\hM}{\widehat{M}}
\newcommand{\hhM}{\widehat{\hM}}
\newcommand{\Map}{{\EuScript Map}}
\newcommand{\MA}{\Map\sb{\A}}
\newcommand{\MAB}{\Map\sb{\A}\sp{\B}}
\newcommand{\MB}{\Map\sp{\B}}
\newcommand{\hN}{\widehat{N}}
\newcommand{\hhN}{\widehat{\hN}}
\newcommand{\OO}{\mathcal{O}}
\newcommand{\Pc}{\mathcal{P}}
\newcommand{\cP}[1]{\Pc\sp{#1}}
\newcommand{\hP}[2]{\widehat{\mathcal{P}}\sp{#1}\sb{#2}}
\newcommand{\Ss}{\mathcal{S}}
\newcommand{\Sa}{\Ss\sb{\ast}}
\newcommand{\Sr}{\Ss\sp{\red}}
\newcommand{\U}{\mathcal{U}}
\newcommand{\eW}{{\EuScript W}}
\newcommand{\eX}{{\EuScript X}}
\newcommand{\eY}{{\EuScript Y}}
\newcommand{\eZ}{{\EuScript Z}}
%
%
\newcommand{\hy}[2]{{#1}\text{-}{#2}}
\newcommand{\Alg}[1]{{#1}\text{-}{\mbox{\sf Alg}}}
\newcommand{\Pa}[1][ ]{$\Pi$-algebra{#1}}
\newcommand{\PAlg}{\Alg{\Pi}}
\newcommand{\pis}{\pi\sb{\ast}}
\newcommand{\gS}[1]{{\EuScript S}\sp{#1}}
\newcommand{\Mod}[1]{{#1}\text{-}{\mbox{\sf Mod}}}
\newcommand{\us}{u\sp{\ast}}
\newcommand{\Set}{\mbox{\sf Set}}
\newcommand{\Seta}{\Set\sb{\ast}}
\newcommand{\Cat}{\mbox{\sf Cat}}
\newcommand{\Ch}{\mbox{\sf Ch}}
\newcommand{\DK}{\mbox{\sf DK}}
\newcommand{\Grp}{\mbox{\sf Gp}}
\newcommand{\OC}{\hy{\OO}{\Cat}}
\newcommand{\SC}{\hy{\Ss}{\Cat}}
\newcommand{\SaC}{\hy{\Sa}{\Cat}}
\newcommand{\SO}{(\Ss,\OO)}
\newcommand{\SaO}{(\Sa,\OO)}
\newcommand{\SOC}{\hy{\SO}{\Cat}}
\newcommand{\SaOC}{\hy{\SaO}{\Cat}}
\newcommand{\Top}{\mbox{\sf Top}}
\newcommand{\Tz}{\Top\sb{0}}
\newcommand{\Topa}{\Top\sb{\ast}}
%
%
\newcommand{\CC}{\mathbb C}
\newcommand{\CP}[1]{\CC\mathbf{P}\sp{#1}}
\newcommand{\FF}{\mathbb F}
\newcommand{\Fp}{\FF\sb{p}}
\newcommand{\Ft}{\FF\sb{2}}
\newcommand{\Fq}{\FF\sb{q}}
\newcommand{\II}{\mathbb I}
\newcommand{\NN}{\mathbb N}
\newcommand{\QQ}{\mathbb Q}
\newcommand{\RR}{\mathbb R}
\newcommand{\ZZ}{\mathbb Z}
\newcommand{\Zp}{\ZZ\lo{p}}
\newcommand{\Zt}{\ZZ\lo{2}}
%
%
\newcommand{\Del}{\mathbf{\Delta}}
\newcommand{\Deln}[1]{\Del\sp{#1}}
\newcommand{\Delnk}[2]{\Deln{#1}\lo{#2}}
\newcommand{\Dop}{\Delta\op}
\newcommand{\res}{\operatorname{res}}
\newcommand{\Dres}{\Delta\sb{\res}}
\newcommand{\Drop}{\Dres\op}
\newcommand{\Dp}{\Delta\sb{+}}
\newcommand{\Dresp}{\Delta\sb{\res,+}}
\newcommand{\Drn}[1]{\Delta\lolr{#1}}
\newcommand{\Drnop}[1]{\Drn{#1}\op}
\newcommand{\Du}{\Del\sp{\bullet}}
%
%
\newcommand{\bA}{{\mathbf A}}
\newcommand{\bB}{{\mathbf B}}
\newcommand{\bC}{{\mathbf C}}
\newcommand{\bD}{{\mathbf D}}
\newcommand{\cD}{D}
\newcommand{\bE}{{\mathbf E}}
\newcommand{\be}[1]{{\mathbf e}\sp{#1}}
\newcommand{\bF}{{\mathbf F}}
\newcommand{\Fv}[1]{\bF\bp{#1}}
\newcommand{\Fk}[1]{F\sp{#1}}
\newcommand{\Fpp}{\hspace{0.5mm}'\hspace{-0.7mm}F}
\newcommand{\Fpk}[1]{\hspace{0.5mm}'\hspace{-0.7mm}F\sp{#1}}
\newcommand{\Fn}[2]{F\sp{#1}\bp{#2}}
\newcommand{\bG}{{\mathbf G}}
\newcommand{\Gv}[1]{\bG\bp{#1}}
\newcommand{\hGv}[1]{\widehat{\bG}\bp{#1}}
\newcommand{\Gn}[2]{G\sp{#1}\bp{#2}}
\newcommand{\hGn}[2]{\widehat{G}\sp{#1}\bp{#2}}
\newcommand{\tg}[1]{\widetilde{g}\sp{#1}}
\newcommand{\bH}{{\mathbf H}}
\newcommand{\wH}{\widehat{H}}
\newcommand{\Hv}[1]{\bH\bp{#1}}
\newcommand{\hHv}[1]{\wH\bp{#1}}
\newcommand{\Hn}[2]{H\sp{#1}\bp{#2}}
\newcommand{\tH}[1]{\widetilde{H}\sp{#1}}
\newcommand{\tHn}[2]{\tH{#1}\bp{#2}}
\newcommand{\bi}{{\mathbf i}}
\newcommand{\bj}{{\mathbf j}}
\newcommand{\bK}{{\mathbf K}}
\newcommand{\bL}{{\mathbf L}}
\newcommand{\KP}[2]{\bK({#1},{#2})}
\newcommand{\KR}[1]{\KP{R}{#1}}
\newcommand{\KZ}[1]{\KP{\ZZ}{#1}}
\newcommand{\KZp}[1]{\KP{\Zp}{#1}}
\newcommand{\KZt}[1]{\KP{\Zt}{#1}}
\newcommand{\KF}[1]{\KP{\Fp}{#1}}
\newcommand{\bM}[1]{{\mathbf M}\sp{#1}}
\newcommand{\bP}{{\mathbf P}}
\newcommand{\bS}[1]{{\mathbf S}\sp{#1}}
\newcommand{\bT}{\mathbf{\Theta}}
\newcommand{\TA}{\bT\sp{\A}}
\newcommand{\TB}{\bT\sb{\B}}
\newcommand{\ThB}{\Theta\sb{\B}}
\newcommand{\TAB}{\bT\sp{\A}\sb{\B}}
\newcommand{\TR}{\Theta\sb{R}}
\newcommand{\TRl}{\Theta\sb{R}\sp{\lambda}}
\newcommand{\bU}{{\mathbf U}}
\newcommand{\bV}{{\mathbf V}}
\newcommand{\bW}{{\mathbf W}\!}
\newcommand{\bX}{{\mathbf X}}
\newcommand{\hX}{\widehat{\bX}}
\newcommand{\tX}{\widetilde{\bX}}
\newcommand{\bY}{{\mathbf Y}}
\newcommand{\hY}{\widehat{\bY}}
\newcommand{\bZ}{{\mathbf Z}}
%
%
\newcommand{\bdel}{\bar{\delta}}
\newcommand{\eps}[1]{\epsilon\sb{#1}}
\newcommand{\gam}[1]{\gamma\sb{#1}}
\newcommand{\iot}[1]{\iota\sb{#1}}
\newcommand{\vare}{\varepsilon}
\newcommand{\var}[1]{\vare\bup{#1}}
\newcommand{\tvar}[1]{\widetilde{\vare}\bup{#1}}
\newcommand{\ett}[1]{\eta\sb{#1}}
\newcommand{\veta}{\vec{\eta}}
\newcommand{\blam}{\bar{\lambda}}
\newcommand{\wvarp}{\widehat{\varphi}}
\newcommand{\wrho}{\widehat{\rho  }}
\newcommand{\tS}{\widetilde{\Sigma}}
\newcommand{\wsig}{\widehat{\sigma}}
\newcommand{\prn}[1]{\iota\bup{#1}}
\newcommand{\bv}{{\bm \vare}}
\newcommand{\bve}[1]{\bv\bup{#1}}
\newcommand{\hve}[1]{\widehat{\bv}\bup{#1}}
\newcommand{\tve}[1]{\widetilde{\bv}\bup{#1}}
%
%
\newcommand{\bd}{\mathbf{d}\sb{0}}
\newcommand{\co}[1]{c({#1})\sb{\bullet}}
\newcommand{\cod}[1]{c\sb{+}({#1})\sb{\bullet}}
\newcommand{\cu}[1]{c({#1})\sp{\bullet}}
\newcommand{\szero}[1]{\sigma\sp{\ast}(#1)}
\newcommand{\Cd}[1]{\bC[{#1}]\sb{\bullet}}
\newcommand{\Fd}{\mathcal{F}\sb{\bullet}}
\newcommand{\Gd}{G\sb{\bullet}}
\newcommand{\Gu}{G\sp{\bullet}}
\newcommand{\oT}[1]{\overline{T}\sb{#1}}
\newcommand{\Ud}{\bU\sb{\bullet}}
\newcommand{\Vd}{\bV\sb{\bullet}}
\newcommand{\oV}[1]{\overline{V}\sb{#1}}
\newcommand{\Wd}{\bW\sb{\bullet}}
\newcommand{\Wdq}{\bW\sb{\bullet}\sp{\QQ}}
\newcommand{\oW}[1]{\overline{\bW}\sb{#1}}
\newcommand{\Wn}[2]{\bW\sb{#1}\bup{#2}}
\newcommand{\oWn}[2]{\overline{\bW}\sb{#1}\bup{#2}}
\newcommand{\W}[1]{\Wn{\bullet}{#1}}
\newcommand{\Wnk}[3]{\bW\sb{#1}\bup{#2,#3}}
\newcommand{\Wk}[2]{\Wnk{\bullet}{#1}{#2}}
\newcommand{\tW}{\widetilde{\bW}}
\newcommand{\tWn}[2]{\tW\quad\hspace*{-4mm}\sb{#1}\bup{#2}}
\newcommand{\tWd}[1]{\tWn{\bullet}{#1}}
\newcommand{\hW}{\widehat{\bW}}
\newcommand{\hWd}{\hW\sb{\bullet}}
\newcommand{\Xd}{\bX\sb{\bullet}}
\newcommand{\xd}{x\sb{\bullet}}
\newcommand{\hXd}{\hX\sb{\bullet}}
\newcommand{\Zd}{\bZ\sb{\bullet}}
\newcommand{\hZd}{\widehat{\bZ}\sb{\bullet}}
\newcommand{\bbj}{[\mathbf{j}]}
\newcommand{\bk}{[\mathbf{k}]}
\newcommand{\bkm}{[\mathbf{k}-\mathbf{1}]}
\newcommand{\bmm}{[\mathbf{m}]}
\newcommand{\bn}{[\mathbf{n}]}
\newcommand{\bnp}{[\mathbf{n}+\mathbf{1}]}
\newcommand{\bone}{[\mathbf{1}]}
\newcommand{\od}{\overline{\partial}}
\newcommand{\odz}[1]{\od\sb{#1}}
%
%
\newcommand{\cH}[3]{H\sp{#1}({#2};{#3})}
\newcommand{\HR}[2]{\cH{#1}{#2}{R}}
\newcommand{\HsR}[1]{\HR{\ast}{#1}}
\newcommand{\HF}[2]{\cH{#1}{#2}{\Fp}}
\newcommand{\HsF}[1]{\HF{\ast}{#1}}
%
%
\newcommand{\ma}[1][ ]{mapping algebra{#1}}
\newcommand{\Ama}[1][ ]{$\A$-mapping algebra{#1}}
\newcommand{\ABma}[1][ ]{$\A$-$\B$-bimapping algebra{#1}}
\newcommand{\Bma}[1][ ]{$\B$-dual mapping algebra{#1}}
\newcommand{\Tma}[1][ ]{$\bT$-mapping algebra{#1}}
\newcommand{\lin}[1]{\{{#1}\}}
\newcommand{\fff}{\mathfrak{f}}
\newcommand{\fM}{\mathfrak{M}}
\newcommand{\fMA}{\fM\sp{\A}}
\newcommand{\fMB}{\fM\sb{\B}}
\newcommand{\fMAB}{\fM\sp{\A}\sb{\B}}
\newcommand{\fMT}{\fM\sb{\bT}}
\newcommand{\fMR}{\fM\sb{R}}
\newcommand{\fT}{\mathfrak{T}}
\newcommand{\fTd}{\fT\sb{\bullet}}
\newcommand{\fV}{\mathfrak{V}}
\newcommand{\fVd}{\fV\sb{\bullet}}
\newcommand{\fVu}{\fV\sp{\bullet}}
\newcommand{\fW}{\mathfrak{W}}
\newcommand{\fWd}{\fW\sb{\bullet}}
\newcommand{\fX}{\mathfrak{X}}
\newcommand{\fY}{\mathfrak{Y}}
\newcommand{\fZ}{\mathfrak{Z}}
%
%
\newcommand{\PP}[2]{\mathcal{P}\sb{#1}[{#2}]}
\newcommand{\Pd}[1]{\PP{#1}{\delta}}
\newcommand{\Pg}[1]{\PP{#1}{\gamma}}
\newcommand{\Pe}[1]{P\!e\sb{#1}}
%
%
\newcommand{\uM}{\underline{M}}
\newcommand{\uN}{\underline{N}}
\newcommand{\uP}{\underline{P}}
%


\makeatletter
\@namedef{subjclassname@2020}{%
  \textup{2020} Mathematics Subject Classification}
\makeatother

%
\title{The higher structure of unstable homotopy groups}
\author{Samik Basu, David Blanc, and Debasis Sen}
\address{Stat-Math Unit, Indian Statistical Institute, Kolkata 700108, India}
\email{samikbasu@isical.ac.in}
\address{Department of Mathematics\\ University of Haifa\\ 3498838 Haifa\\ Israel}
\email{blanc@math.haifa.ac.il}
\address{Department of Mathematics \& Statistics\\
Indian Institute of Technology, Kanpur\\ Uttar Pradesh 208016\\ India}
\email{debasis@iitk.ac.in}

\date{\today}

\subjclass[2020]{Primary: 55Q35; \ secondary: 55Q40, 55T99}
\keywords{Higher homotopy operations, Toda brackets, unstable homotopy groups of spheres,
homotopy spectral sequence of a simplicial space}

\begin{abstract}
We construct certain unstable higher order homotopy operations indexed by the simplex
categories of \w{\Deln{n}} for \w[,]{n\geq 2} and prove that all elements in the
homotopy groups of a wedge of spheres are generated under such operations by Whitehead
products and the group structure. This provides a stronger unstable analogue of Cohen's
theorem on the decomposition of stable homotopy.
\end{abstract}

\maketitle

\setcounter{section}{-1}

%
%
\sect{Introduction}
\label{cint}

In \cite{JCohDS}, Joel Cohen showed how all elements in the stable homotopy groups
of the sphere spectrum can be generated under composition and higher order homotopy
operations from certain atomic elements: the three Hopf classes and their odd-primary
analogues. The main importance of this result is conceptual, as one of the few known
global facts about the stable homotopy groups as a whole
(see \cite{TLinHD,NisN,SchweSU,SchweSR,BSStapC,BSStapM}).
Our goal here is to show that a similar but stronger result holds for the unstable
homotopy groups of wedges of spheres.

There are a number of competing definitions of higher homotopy operations, particularly
in the unstable setting (see \cite{SpanS,BMarkH,ShipA,SagaU,CFranH,BJTurnHA}),
all of which ultimately involve obstructions to lifting appropriate diagrams from the
homotopy category. For our purposes, the relevant indexing categories turn out to be
finite subcategories of $\Delta$ (see \S \ref{snac}), yielding the notion of a
\emph{higher order simplicial operation}, defined in Section \ref{cshho}.
This construction makes sense in any \wwb{\infty,1}category, is in fact independent of
the specific model of $\infty$-categories we use, and includes as a special case
the long Toda brackets of \cite{GWalkL,BJTurnHI,BBSenT}. Formally, it also includes
the (iterated) suspension as a special case (see \S \ref{ssuspss} below).

We are mainly concerned here with decomposability for the homotopy groups of
spheres, since the decomposition for wedges of spheres follows from
Hilton's Theorem.  This can be thought of as a higher order ``ringoid'' version of
identifying the indecomposables for a graded algebra.  However,
following \cite[Theorem 4.5]{JCohDS}, we first address the ``module'' version: that is,
the question of higher order decomposability for \w{\pi\sb{\ast}\bX} as a \Pa
(a graded group equipped with an action of the primary homotopy operations \wh
see \cite[\S 1]{StoV}), in the following sense:

\begin{defn}\label{dgho}
Denote by \w{\Pi\sb{>1}} the graded set of all classes in \w[,]{\pis\bV}
for all finite wedges $\bV$ of simply-connected spheres.
For a simply-connected space \w[,]{\bX\in\Topa} we say that \w{\pi\sb{\ast}\bX} is
\emph{generated of higher order over} \w{\Pi\sb{>1}} by a graded subset
\w{A\subset\pi\sb{\ast}\bX} if any \w{\gamma\in\pi\sb{N}\bX} is a linear combination
over $\ZZ$  of elements obtained from $A$ recursively
\begin{enumerate}
\renewcommand{\labelenumi}{(\roman{enumi})}
\item By the action of a non-trivial primary operation \wh that is,  a Whitehead product
or precomposition with an element \w{\alpha\in\pi\sb{\geq N+1}\bS{N}} for some \w{N\geq 2}
(see \cite[Ch.\ XI]{GWhE});  or
\item As the value of the higher order simplicial operation associated to an augmented
restricted simplicial space \w{\Wd\to\bX} as in Section \ref{cssa}, with each
\w{\bW\sb{n}} a wedge of simply-connected spheres.
\end{enumerate}
\noindent Note that both types of constructions raise degrees.
\end{defn}

\begin{defn}\label{datomicc}
The set $A$ of \emph{atomic} classes in \w{\pi\sb{\ast}\bX}
consists of those \w{[f]\in\pi\sb{n}\bX} which, for some prime \w[,]{p\geq 0}
induce a non-zero map \w{f\sb{\ast}:\HiF{n}{\bS{n}}\to\HiF{n}{\bX}}
(where \w{\FF\sb{0}:=\QQ} by convention). These classes are determined up to
multiplication by \w{1<k<p} when \w[.]{p>0}
\end{defn}

\begin{thma}
If  $\bX$ is simply-connected, the elements of \w{\pis\bX} are generated of
higher order over \w{\Pi\sb{>1}} by the atomic maps.
\end{thma}
\noindent See Theorem \ref{tjct} below. Our main technical result is then:

\begin{thmb}
If $\bX$ is the $k$-connected cover of the $k$-sphere for \w[,]{k\geq
2} the atomic maps in \w{\pis\bX} consist of (the lifts of) the Hopf maps
\w[,]{\eta\sb{k}\in\pi\sb{k+1}\bS{k}}
\w[,]{\nu\sb{k}\in\pi\sb{k+3}\bS{k}} and
\w[,]{\sigma\sb{k}\in\pi\sb{k+7}\bS{k}} and the first $p$-torsion
classes \w[,]{\alpha\sb{1}(p)\in\pi\sb{k+2p-3}\bS{k}} for odd primes $p$.
\end{thmb}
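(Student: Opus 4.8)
The plan is to translate ``atomic'' into a question about \emph{spherical} homology classes of $\bX := \bS{k}\langle k\rangle$ and then to quote the Hopf invariant one theorems. By definition $[f]\in\pi\sb{n}\bX$ is atomic precisely when its mod-$p$ Hurewicz image $h\sb{p}([f]) = f\sb{\ast}(\iota\sb{n})\in\HiF{n}{\bX}$ is nonzero for some prime $p$; and because $f\sp{\ast}$ kills $\HuF{j}{\bX}$ for $0<j<n$, such a class is forced to pair trivially with every positive-degree Steenrod operation landing in $\HuF{n}{\bX}$, hence to be dual to an $\mathcal{A}\sb{p}$-module generator of $\HuFs{\bX}$ in degree $n$. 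The first step would therefore be to compute $\HuFs{\bX}$ as a module over the mod-$p$ Steenrod algebra $\mathcal{A}\sb{p}$ in a range: the cofibre of $\bX\to\bS{k}$ agrees with $K(\ZZ,k)$ through roughly dimension $2k$ (Blakers--Massey), which gives an $\mathcal{A}\sb{p}$-module isomorphism $\widetilde{H}\sp{k+r}(\bX;\Fp)\cong\widetilde{H}\sp{k+r+1}(K(\ZZ,k);\Fp)$ for $r$ small relative to $k$, and in that range the right-hand side is the degree-$(r{+}1)$ component of $M := \mathcal{A}\sb{p}/\mathcal{A}\sb{p}\beta$, where $\beta = Sq\sp{1}$ when $p=2$. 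Since $\mathcal{A}\sb{p}$ is free over the exterior subalgebra generated by $\beta$, there is a short exact sequence $0\to\Sigma M\to\mathcal{A}\sb{p}\to M\to 0$, forcing $\mathrm{Tor}\sb{s}\sp{\mathcal{A}\sb{p}}(\Fp,M)=\Fp$ in internal degree $s$; feeding this into the $\mathrm{Tor}$ long exact sequence of $0\to M\sp{\geq 2}\to M\to\Fp\to 0$ shows that the $\mathcal{A}\sb{p}$-module generators of $M\sp{\geq 2}$ --- equivalently of $\HuFs{\bX}$ above the bottom --- occur exactly in degrees $2\sp{i}$ (for $p=2$) and $2p\sp{i}(p{-}1)$ (for $p$ odd). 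Hence $\bX$ can carry a spherical class only in the degrees $n = k+2\sp{i}-1$, $i\geq 1$, at the prime $2$, and only in the degrees $n = k+2p\sp{i}(p{-}1)-1$, $i\geq 0$, at an odd prime $p$.

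The second, and main, step is to show that in each of these candidate degrees a class $\xi$ has nonzero Hurewicz image if and only if it has Hopf invariant one. Since $\xi$ becomes null in $K(\ZZ,k)$, the mapping cone $C\sb{\xi}=\bX\cup\sb{\xi}e\sp{n+1}$ admits a map to $K(\ZZ,k)$; chasing the resulting cofibre sequence (and the analogous map for the two-cell complex $\bS{k}\cup\sb{\xi'}e\sp{n+1}$, where $\xi'$ is the image of $\xi$ under $\bX\to\bS{k}$) identifies $h\sb{p}(\xi)\neq 0$ with the statement that the indecomposable operation $Sq\sp{2\sp{i}}$, resp.\ $P\sp{p\sp{i}}$, acts nontrivially on $\Hus{\bS{k}\cup\sb{\xi'}e\sp{n+1}}{\Fp}$ --- that is, that $\xi'$ is a Hopf invariant one element. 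By Adams' Hopf invariant one theorem \cite{AdHI}, and by the Adams--Liulevicius theorem at odd primes, such elements exist only when $2\sp{i}\in\{2,4,8\}$, and at odd $p$ only when $i=0$; up to a unit multiple they are represented by $\eta\sb{k}$, $\nu\sb{k}$, $\sigma\sb{k}$ and $\alpha\sb{1}(p)$. Conversely these four families \emph{are} atomic: for $\eta\sb{k}$ one only needs the Hurewicz isomorphism $\pi\sb{k+1}\bX\xrightarrow{\cong}\HiZ{k+1}{\bX}$ (reduced mod $2$, or mod every prime when $k=2$), and for $\nu\sb{k}$, $\sigma\sb{k}$, $\alpha\sb{1}(p)$ one applies the ``if'' half of the same dictionary together with the classical nonvanishing of their Hopf invariants. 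The finitely many small values of $k$ falling outside the stable range above are handled directly from the known homology of $\bS{k}\langle k\rangle$ (for instance $\bS{2}\langle 2\rangle=\bS{3}$, and $\HiZ{2n}{\bS{3}\langle 3\rangle}=\ZZ/n$), where the list of candidate degrees is short enough that the verification is routine.

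\textbf{Where the difficulty lies.} The homological input of the first step detects only the $\mathcal{A}\sb{p}$-module generators of $\HuFs{\bX}$, and these persist in \emph{all} degrees $k+2\sp{i}-1$ (resp.\ $k+2p\sp{i}(p{-}1)-1$); so the result cannot be obtained by primary cohomology operations alone. The crux is the dictionary of the second step --- pinning down precisely when a class with the right homological shadow is genuinely spherical --- which is what lets Adams' secondary-operation obstruction enter and eliminate $2\sp{i}\geq 16$ (and the odd-primary cases $i\geq 1$). Making that dictionary exact, and checking that it is not disturbed by the borderline unstable values of $k$, is the technical heart of the proof.
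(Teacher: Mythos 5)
Your overall strategy --- read atomicity as the existence of an indecomposable spherical class in $\HuFs{\bX}$, convert that into a nontrivial primary mod $p$ operation on the two-cell complex $\bS{k}\cup\sb{\varphi}e\sp{n+1}$, and then invoke Adams and Adams--Liulevicius --- is exactly the route the paper takes (Theorem \ref{ttransg} plus Corollary \ref{catomic}), and your location of where the work lies is accurate. The problem is that both places where you defer are places where the argument does not go through without substantial further input. First, your identification of $\HuF{k+r}{\bX}$ with $\HuF{k+r+1}{\KZp{k}}$, and hence your Tor computation of the $\mathcal{A}\sb{p}$-module generators, is only valid for $r$ small relative to $k$; but for a fixed $k$ the theorem must exclude atomic classes in \emph{every} degree $n$. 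Beyond the Blakers--Massey range $\HuFs{\KZp{k}}$ is a free graded-commutative algebra on the $\Sq{I}\iot{k}$ rather than the cyclic module $\mathcal{A}\sb{p}/\mathcal{A}\sb{p}\beta$, and $\HuFs{\bX}$ contains classes (products involving $\gam{2k-1}$ and the polynomial generators $\Sq{I}\iot{k-1}$) which are cup-decomposable but not visibly $\mathcal{A}\sb{p}$-module decomposable, so a criterion that only sees the Steenrod-module structure does not dispose of them. This is why the paper computes $\HuFs{\bX}$ in all degrees, with its algebra structure, from the Wang exact sequence of $\KZp{k-1}\to\bX\to\bS{k}$, and works with indecomposability in the full unstable cohomology algebra. (Your closing remark about ``finitely many small values of $k$'' addresses small $k$, not the large-$n$ problem, which is where the issue actually sits.)

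Second, and more seriously, the ``dictionary'' you postpone is the theorem. Knowing that $f\sp{\ast}\lambda\neq 0$ for an indecomposable $\lambda\in\HuF{n}{\bX}$ produces an operation on $\Cof{\varphi}$ only if $\lambda$ is \emph{transgressive} in the Serre spectral sequence of $\bX\to\bS{k}\to\KZp{k}$: the cofibre-sequence chase you allude to needs $\bdel\sp{\ast}\blam$ to be pulled back along $\Cof{g}\to\KZp{k}$ from a class $\theta(\iot{k})$, and that is precisely the transgression statement. Establishing it is the bulk of the paper's proof of Theorem \ref{ttransg}: a comparison of this Serre spectral sequence with that of the path-loop fibration on $\KZp{k}$, a page-by-page computation of the kernel of the comparison map to force the differentials on the $\Sq{j}\iot{k-1}$ (resp.\ $\cP{j}(\iot{k-1})$) and on $\gam{2k-1}$ into transgressive position, the Kudo transgression theorem at odd primes, and the freedom to modify representatives by multiples of $\gam{2k-1}$. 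None of this follows formally from the existence of the map $\Cof{\varphi}\to\Cof{g}$. You have correctly identified the hole, but as submitted the proposal reduces the theorem to it rather than filling it.
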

\noindent See Theorem \ref{ttransg} below. The ``spheres of birth'' for the various
classes differ, of course: \w{k=2} for \w[,]{\eta\sb{k}} \w{k=4} for \w[,]{\nu\sb{k}}
\w{k=8} for \w[,]{\sigma\sb{k}} and \w{k=3} for \w[.]{\alpha\sb{1}(p)}

In this paper we are mainly concerned with the ``ringoid'' \w{\Pi\sb{>1}} itself,
whose elements correspond to the primary operations on homotopy groups (as noted above).

\begin{defn}\label{dgoho}
We say that \w{\Pi\sb{>1}} is \emph{generated of higher order} by a (graded)
subset \w{\Fc{0}} if there is an exhaustive increasing filtration
\w{\Fc{0}\subseteq\Fc{1}\subseteq\Fc{2}\subseteq\dotsc} of
\w[,]{\Pi\sb{>1}} such that \w{\Fc{n}} is obtained from
classes in lower filtration by group operations, composition (of elements in
\w[),]{\Fc{<n}} or as the value of a higher order simplicial operation associated to a
simplicial space \w{\Wd} with \w[,]{\|\Wd\|\simeq\bV} where that
each \w{\bW\sb{n}} is a finite wedge of spheres and each face map of \w{\Wd}
is in \w[.]{\Fc{<n}}
\end{defn}

Theorem B then implies that \w{\Pi\sb{>1}} is generated of higher order by the atomic
maps listed there, yielding an unstable version of \cite[Theorem 4.2]{JCohDS} (see
Corollary \ref{catomic}).  A further analysis yields:

\begin{thmc}
The homotopy groups of a finite wedge of simply-connected spheres are generated
of higher order by the fundamental classes (i.e., inclusions of wedge summands)
and their Whitehead products.
\end{thmc}
\noindent See Theorem \ref{twhitpgen} below\vsm.

In addition to the sample calculations of Section \ref{csusp}, used in the proof of
Theorem C, in Section \ref{ccpn} we show what form it takes in practice
by providing a rational higher operation description of the Hopf fibrations
\w{\bS{2n+1}\to\CP{n}} for all complex projective spaces.

\begin{remark}\label{ralg}
Cohen actually provides an algorithm for finding decompositions of stable
classes, using spectral sequences (and conversely, see \cite{AdHI,WXuT} for examples
of the use of such decompositions to calculate differentials in the
Adams spectral sequence).

It seems unlikely that such an algorithm could be obtained unstably with the
present state of our knowledge. However, in \cite{BBSenA} we provide a more detailed
description of how the value of a previously defined higher order operation may be
inserted in the diagram defining another such operation, with a view to defining
an explicit notion of the ``algebra of unstable higher homotopy operations''. This
is perhaps more meaningful than Theorem C itself, since the abstract fact that a
particular set \w{\Fc{0}} suffices to generate \w{\Pi\sb{>1}} is not very useful on
its own.
\end{remark}

\begin{notn}\label{snac}
Let $\Delta$ denote the category of non-empty finite ordered sets and order-preserving
maps (cf.\ \cite[\S 2]{MayS}), and \w{\Dres} the subcategory with the same
objects but only monic maps. Similarly, \w{\Dp} denotes the category of all
finite ordered sets (including $\emptyset$), \w{\Dresp} the corresponding subcategory
of monic maps, and \w{\Drn{n}} the subcategory of \w{\Dresp} with consisting of
ordered sets with at most \w{n+1} elements.

A \emph{simplicial object} \w{\Gd} in a category $\C$ is a functor
\w[,]{\Dop\to\C} a \emph{restricted} simplicial (or semi-simplicial) object is a
functor \w[,]{\Dres\op\to\C} an \emph{augmented} simplicial object is a functor
\w[,]{\Dp\op\to\C} a \emph{restricted augmented} simplicial object is a functor
\w[,]{\Dresp\op\to\C} and an
\emph{$n$-truncated restricted augmented} simplicial object is a functor
\w[.]{\Drnop{n}\to\C}

In each case  write \w{G\sb{n}} for the value of \w{\Gd} at
\w[.]{\bn=(0<1<\dotsc<n)} There is a natural embedding
\w[,]{\co{-}:\C\to\C\sp{\Dop}} with \w{\co{A}} the constant
simplicial object and similarly \w{\cod{A}} for the constant augmented
simplicial object. The inclusion of categories \w{\Delta\to\Dp} induces
the forgetful functor \w[.]{\C\sp{\Dp\op} \to \C\sp{\Dop}}

The category of topological spaces will be denoted by \w[,]{\Top}
that of pointed spaces by \w[,]{\Topa} and that of pointed connected
spaces by \w[.]{\Tz} The category of simplicial sets will be
denoted by \w[,]{\Ss=\Set\sp{\Dop}} that of pointed simplicial
sets by \w[,]{\Sa=\Seta\sp{\Dop}} that of reduced simplicial
sets by \w{\Sr} (see \cite[III, \S 3]{GJarS}), and that of simplicial groups
by \w[.]{\G=\Grp\sp{\Dop}} Recall that a reduced simplicial set \w{\bX} is one
that has a unique zero simplex \w[.]{X_0 = \ast} Write \w{\mapa(\bX,\bY)} for
the standard function complex in \w[,]{\Sa} \w[,]{\Tz} or
$\G$ (see \cite[I, \S 1.5]{GJarS}).
\end{notn}

\begin{ack}
  We would like to thank the referee for his or her carefully formulated comments,
  and George Peschke for pointing out that the rational case was missing in Definition
  \ref{datomicc}.
The first author was partially supported by SERB MATRICS grant 2018/000845,
and the second author by Israel Science Foundation grant 770/16.
\end{ack}

%
%
\sect{Simplicial higher order operations}
\label{cshho}

As noted in the introduction, many versions of higher order operations have
appeared in the literature, starting with Adem's secondary cohomology operations
(see \cite{AdemI}), Massey's triple products (see \cite{MassN}), and the Toda
brackets of \cite{TodG}.
The values of such operations usually appear as obstructions to lifting a
homotopy-commutative diagram to a model category $\C$,
and are characterized by the following properties:
\begin{enumerate}
\renewcommand{\labelenumi}{(\roman{enumi})}
\item The operations are \emph{indeterminate}, in the sense that more than
one value may be obtained, depending on choices made in lifting;
\item An $n$-th order operation is defined only when all lower order operations
(associated to partial diagrams) vanish, for some consistent set of choices.
\item The final values are expressible in \w[.]{\ho\C}
\end{enumerate}
\noindent See \cite{BMarkH,BJTurnHA}.

\begin{mysubsection}{Models of $\infty$-categories}
\label{smic}
Although in this paper we are only concerned with topological spaces, the notion of
higher operation we use here makes sense in a more general setting \wh
namely, any model of $\infty$-category theory consisting of:

\begin{enumerate}
\renewcommand{\labelenumi}{(\alph{enumi})}
\item A category $\C$, with a distinguished full subcategory \w{\C\sb{0}} of
$\infty$-\emph{categories}.
\item A \emph{homotopy category} functor \w[,]{\Pc\colon\C\sb{0}\to\Cat}
with a right adjoint \w{B:\Cat\to\C\sb{0}} called the \emph{nerve} functor.
\item A \emph{set of objects} functor \w[,]{\Ob\colon\C\to\Set} such that for
each \w{X\in\C\sb{0}} and \w[,]{x, y \in \Ob(X)} we have a Kan complex
\w{\Map\sb{X}(x, y)} with homotopy associative and unital composition, and
\w[.]{\Pc(x, y)=\pi\sb{0}\Map\sb{X}(x, y)} Morphisms
\w{F\colon X\to Y} in $\C$ induce
\w[,]{\Map\sb{X}(x, y) \rightarrow \Map\sb{Y}(F(x), F(y))}
which respects the  composition operation up to homotopy.
\item When dealing with any specific \w[,]{X\in\C\sb{0}} we shall assume that all
necessary limits and colimits exist in $X$, and that it is \emph{pointed} (that is,
the initial and final objects coincide).
\end{enumerate}
\noindent Compare \cite{RVeriIC} and \cite[\S 2]{BMeadS}.

In To{\"{e}}n's axiomatization (see \cite[\S 4]{ToenA}), and in all examples of interest,
$\C$ is a model category and \w{\C\sb{0}} consists of the fibrant objects in $\C$.
For example, if \w{\C=\Ss} with the Joyal model category, \w{\C\sb{0}} consists of
the quasi-categories, while if $\C$ is simplicial categories, \w{\C\sb{0}} consists of
those enriched in Kan complexes.
One then has a Quillen equivalence of $\C$ to the complete Segal model structure of
\cite{RezkM}, and therefore to quasi-categories (\cite{LuriH}),
simplicial categories (\cite{BergM}), and the other standard models of
\wwb{\infty,1}categories.
\end{mysubsection}

\begin{mysubsection}{The homotopy spectral sequence of a simplicial object}
\label{shssso}
Our definition of higher operations is inspired by the description of
the differentials in the homotopy spectral sequence of a simplicial space in
\cite[\S 6]{BMeadS}, which we briefly recall:

For \w{(\C,\C\sb{0})} and \w{X\in\C\sb{0}} as in \S \ref{smic}, let \w{\xd}
be a simplicial object in $X$ (if \w[,]{\C=\SC} then \w{\xd} is just
an $\infty$-homotopy commutative diagram in the sense of \cite[\S 2.3]{DKSmH}).
If $y$ is a homotopy cogroup object in $X$, then \w{\hWd:=\Map\sb{X}(y,\xd)} is an
$\infty$-homotopy commutative simplicial object in $\Ss$ which may be made into
a strict simplicial object \w{\Wd\in\Ss\sp{\Dop}} by \cite[Corollary 2.5]{DKSmH}
(or \cite[Theorem 4.49]{BVoHI}).
The \emph{homotopy spectral sequence} of \w{(\xd,y)} is then defined to be
the Bousfield-Friedlander spectral sequence for \w{\Wd} (see \cite[Theorem B.5]{BFrieH}),
having
\begin{myeq}[\label{eqssss}]
\Ett{n}{k}~\cong~\pi\sb{n}\sp{h}\pi\sb{k}\sp{v}\Wd~\Longrightarrow~\pi\sb{n+k}\|\Wd\|~,
\end{myeq}
\noindent where \w[,]{\|\Wd\|} the diagonal of the bisimplicial set, is also its
homotopy colimit (see \cite[XII, \S 2]{BKanH}), so it is weakly equivalent to
\w{\Map\sb{X}(y,\colim\sb{\Dop}\xd)} (assuming $X$ has enough colimits).

The original version, for bisimplicial groups, is due to Quillen
(see \cite{QuiS}). If we apply geometric realization to \w{\Wd} in each simplicial
dimension, we obtain a simplicial topological space \w[,]{\hXd} with Reedy fibrant
replacement \w{\Xd} (see \cite[\S 15.3]{PHirM}). Dwyer, Kan, and Stover constructed
the \emph{spiral spectral sequence} of \w{\Xd} (with each \w{\bX\sb{n}} connected),
and showed in \cite[Proposition 8.3]{DKStB} that it is isomorphic to the above from
the \ww{E\sp{2}}-term on.

In \cite{BMeadS} we showed that this spectral sequence can be set up internally
to the $\infty$-category $X$, in terms of \w{(\xd,y)} themselves: for this purpose,
we represent any element in the \ww{E\sp{2}}-term of \wref{eqssss} by a map
\w[,]{f:\Sigma\sp{k}y\to x\sb{n}} and show that it survives to the \ww{E\sp{r}}-term
if and only if we can include $f$ in a (homotopy coherent) diagram of the form
\myudiag[\label{eqerrep}]{
\Sigma\sp{k}y  \ar@/^1pc/[rr]^{0}_{\vdots}  \ar@/_1pc/[rr]_{0} \ar[dd]_{f}  &&
0  \ar@/^1pc/[rr]_{\vdots}  \ar@/_1pc/[rr] \ar[dd] && 0 \ar[dd] & \cdots \cdots & 0 \ar[dd]\\
&& && & & \\
x\sb{n} \ar@/^1pc/[rr]^{d\sb{0}}\sb{\vdots}  \ar@/_1pc/[rr]\sb{d\sb{n}} && x\sb{n-1}
\ar@/^1pc/[rr]^{d\sb{0}}\sb{\vdots}  \ar@/_1pc/[rr]\sb{d\sb{n-1}} && x\sb{n-2} &
\cdots \cdots & x\sb{n-r+1}
}
\noindent in $X$.

Moreover, as shown in \cite[Theorem 6.8]{BMeadS}, the value of the differential
\w{d\sp{r}\lra{f}\in\Eu{r}{n-r}{k+r-1}} is represented in \w{\Eot{n-r}{k+r-1}} by
homotopy classes of various maps \w[,]{\alpha:\Sigma\sp{k+r-1}y\to x\sb{n-r}}
each obtained as a value of a certain $r$-th order homotopy operation
associated to \wref[.]{eqerrep}

In particular, if \w{[f]\in\Eot{n}{k}} is a permanent cycle, the element it represents
in \w{\pi\sb{n+k}\|\Wd\|\cong[\Sigma\sp{n+1}y,\colim\xd]\sb{X}} may be described in
precisely the same way as the value of an \wwb{n+1}st order operation.
\end{mysubsection}

\begin{mysubsection}{Differentials in the spectral sequence}
\label{sdss}
It is simplest to describe the differentials in \w[,]{\C=\SC} the category of simplicially
enriched categories (or equivalently, simplicial categories with constant object sets),
with \w{\C\sb{0}} thus consisting of categories enriched in Kan complexes.
However, it is important to point out that the construction makes sense in any model
of \wwb{\infty,1}categories satisfying the assumptions of
\S \ref{smic}. This follows from \cite[Corollary 6.11]{BMeadS}, and is illustrated
for quasi-categories in \cite[\S 8]{BMeadS}.

Thus we work directly with a homotopy-coherent simplicial space \w[.]{\Zd}
This means replacing the indexing category $\Delta$ by a cofibrant replacement
in \w[,]{\SC} such as the Dwyer-Kan resolution \w{\DK(\Delta)} (see \cite[\S 2]{DKanS}).
In fact, we can replace $\Delta$ by \w{\Dres} (see \S \ref{snac}), since our
spectral sequence  is actually determined by the restriction \w{\Zd'} of \w{\Zd}
to \w[.]{\Drop}

Now if \w{\hZd} is any strictification of \w[,]{\Zd} they have the same (homotopy)
colimit in the \wwb{\infty,1}category of spaces, so by abuse of notation we may
denote this colimit by \w{\|\Zd\|} (since, as noted above, it may be identified with the
diagonal \w[).]{\|\hZd\|} Moreover, \w{\|\Zd\|\simeq\colim\sb{\Drop}\Zd'}
by \cite[Appendix A]{SegCC}.

Recall that the $n$-\emph{permutohedron} \w{P\sp{n}} is the convex hull of the
\w{(n+1)!} points in \w{\RR\sp{n+1}} obtained by permuting the coordinates of
\w{(x\sb{0},\cdots,x\sb{n})} for \w{n+1} distinct real numbers
\w[.]{\{x\sb{0},\cdots,x\sb{n}\}}
Thus the $1$-permutohedron is a $1$-simplex, and the $2$-permutohedron
is a hexagon:
\myfigure[\label{eqhexagon}]{
\begin{picture}(100,110)(-100,-10)
%
%
\put(95,85){\line(-3,-2){30}}
\put(37,79){\scriptsize $(0,1)\times(2)$}
\put(97,87){\circle*{3}}
\put(80,90){\small $(1,0,2)$}
%
%
\put(100,85){\line(3,-2){30}}
\put(117,79){\scriptsize $(1)\times(0,2)$}
\put(132,63){\circle*{3}}
\put(135,60){\small $(1,2,0)$}
%
%
\put(132,60){\line(0,-1){30}}
\put(134,42){\scriptsize $(1,2)\times(0)$}
\put(132,28){\circle*{3}}
\put(135,20){\small $(2,1,0)$}
%
%
\put(130,26){\line(-3,-2){30}}
\put(113,8){\scriptsize $(2)\times(0,1)$}
\put(97,5){\circle*{3}}
\put(80,-6){\small $(2,0,1)$}
%
%
\put(64,28){\circle*{3}}
\put(26,25){\small $(0,2,1)$}
\put(65,26){\line(3,-2){30}}
\put(36,8){\scriptsize $(0,2)\times(1)$}
%
%
\put(64,63){\circle*{3}}
\put(26,60){\small $(0,1,2)$}
\put(64,60){\line(0,-1){30}}
\put(19,42){\scriptsize $(0)\times(1,2)$}
\end{picture}
}

By \cite[Proposition 5.6]{BMeadS}, for every
\w{-1\leq j<m} there is an isomorphism of simplicial sets
\begin{myeq}[\label{eqpermut}]
\DK(\Dresp\op)(\bmm,\bbj)~\cong~\coprod\sb{\theta:\bbj\to\bmm} P\sp{j-m-1}
\end{myeq}
\noindent between the mapping space in \w{\DK(\Dresp)} and a disjoint
union of (triangulations of) the \wwb{j-m-1}-permutohedron, indexed by the distinct maps
\w{\bbj\to\bmm} in \w[.]{\Dres}

This allows us to reduce the search for a diagram of the form \wref{eqerrep} in our
$\infty$-category $X$ to the case where \w[,]{X=\SC} \w{\xd} is replaced by the
(strict) restricted simplicial space \w{\Wd} realizing
\w[,]{\map\sb{X}(y,\xd')} and \w{\Sigma\sp{i}y} in $X$ is replaced by a
sphere \w[.]{\bS{k}} However, the source (upper horizontal diagram in \wref[)]{eqerrep}
is still required to be cofibrant in \w[,]{\SC} and all the necessary higher
homotopies needed to provide the coherence of \wref{eqerrep} may be encoded
by adjunction in a pointed map from the $r$-skeleton of \wref{eqpermut} into
\w[.]{\map\sb{\Sa}(\bS{k},\bW\sb{n-r+1})}

Moreover, by a more careful analysis of the spiral spectral sequence in the case where
\w{\Wd} is Reedy fibrant, one can show that only one of the components in the right hand
side of \wref{eqpermut} is needed (see \cite[Theorem 6.8]{BMeadS}). Using the fact that
\w{P\sp{n}} is a convex polytope in \w[,]{\RR\sp{n}} so its boundary
(or \wwb{n-1}skeleton) is an \wwb{n-1}sphere, by a further adjunction we obtain a
single map \w[,]{\alpha:\bS{i+r}\to\bW\sb{n-r+1}} which represents \w{d\sp{r}\lra{f}} in
\w{\Eu{r}{n-r+1}{i+r}} by \cite[Corollary 6.10]{BMeadS}.

As explained in \cite[Corollary 6.11]{BMeadS}, the construction sketched above may be
described in terms of a sequence of maps in the original $\infty$-category $X$,
starting with the diagram \wref{eqerrep} in the homotopy category of $X$, with each map
determined by the universal property of a colimit on an appropriate subcategory of
\w{\Dres} in terms of the maps obtained inductively in earlier stages. For our purposes
we shall not need the details of this construction, but only the following:
\end{mysubsection}

\begin{defn}\label{dshoho}
Let \w{X\in\C\sb{0}} be an $\infty$-category as in \S \ref{smic},
\w{\xd:B\Dresp\op\to X} an augmented restricted simplicial object in $X$, and $y$
a cogroup object in \w[.]{\ho X} For each \w[,]{n\geq 1} the associated
\emph{simplicial \wwb{n+1}st order operation} is defined as follows

\begin{enumerate}
\renewcommand{\labelenumi}{(\alph{enumi})~}
\item The \emph{initial data} for the operation consists of \w[,]{\xd} together
with the homotopy class of a map \w{f:\Sigma\sp{k}y\to x\sb{n}} in $X$
with \w{d\sb{j}f\sim 0} for \w[.]{0\leq j\leq n}
\item \emph{Full data} for the operation consists of a diagram in $X$:
\myudiag[\label{eqfulldata}]{
\Sigma\sp{k}y  \ar@/^1pc/[rr]^{0}_{\vdots}  \ar@/_1pc/[rr]_{0} \ar[dd]_{f}  &&
0  \ar@/^1pc/[rr]_{\vdots}  \ar@/_1pc/[rr] \ar[dd] && 0 \ar[dd] & \cdots \cdots &
0 \ar[dd] &&\\
&& && & & && \\
x\sb{n} \ar@/^1pc/[rr]^{d\sb{0}}\sb{\vdots}  \ar@/_1pc/[rr]\sb{d\sb{n}} && x\sb{n-1}
\ar@/^1pc/[rr]^{d\sb{0}}\sb{\vdots}  \ar@/_1pc/[rr]\sb{d\sb{n-1}} && x\sb{n-2} &
\cdots \cdots & x\sb{0} \ar[rr]\sp{\vare} && x\sb{-1}
}
\noindent (implicitly involving choices for all higher coherences), if it exists.
\item The \emph{value} for the operation is the homotopy class of the map
\w{\Sigma\sp{n+k}y\to x\sb{-1}} determined by the full data, and the fact that
the (homotopy) colimit of the top row of \wref{eqfulldata} is \w[.]{\Sigma\sp{n+k}y}
\end{enumerate}

It is evident from the usual properties of spectral sequences that the
simplicial higher order homotopy operations we have defined satisfy the three properties
listed at the beginning of Section \ref{cshho}.
Moreover, when $X$ is an $\infty$-category model for \w[,]{\Topa} $y$ is a sphere, and
each \w{x\sb{n}} is a wedge of spheres, we have in fact a higher homotopy operation
\textit{sensu stricto}, in as much as all the ingredients of the construction take
values in homotopy groups.
\end{defn}

\begin{remark}\label{rextaso}
Note that a diagram of the form \wref{eqfulldata} (or \wref[)]{eqerrep}
in $X$ or \w{\ho X} can be re-written as a single truncated restricted simplicial object
\myudiag[\label{eqextaso}]{
\Sigma\sp{k}y \ar@/^1pc/[rr]\sp{d\sb{0}=f}\sb{\vdots} \ar@/_1pc/[rr]\sb{d\sb{n+1}=0} &&
x\sb{n} \ar@/^1pc/[rr]^{d\sb{0}}\sb{\vdots}  \ar@/_1pc/[rr]\sb{d\sb{n}} && x\sb{n-1}
\ar@/^1pc/[rr]^{d\sb{0}}\sb{\vdots}  \ar@/_1pc/[rr]\sb{d\sb{n-1}} && x\sb{n-2} &
\cdots \cdots & x\sb{-1}
}
\noindent extended one further degree to the left, with all face maps
out of the \wwb{n+1}slot (except \w[)]{d\sb{0}} equal to $0$.

This was the original point of view  in the construction of the spiral spectral
sequence of a simplicial space \w[:]{\Wd\in\Topa\sp{\Dop}} Dwyer, Kan, and Stover
showed that if \w{\Wd} is Reedy fibrant, there is a fibration sequence
\begin{myeq}[\label{eqfibseq}]
  \Omega Z\sb{n-1}\Wd~\xra{\partial\sb{n-1}}~Z\sb{n}\Wd~\xra{j\sb{n}}~
  C\sb{n}\Wd~\xra{\bd\sp{W\sb{n}}}~Z\sb{n-1}\Wd~.
\end{myeq}
\noindent for each \w[,]{n\geq 1} where
\w{C\sb{n}\Wd:=\bigcap\sb{i=1}\sp{n}\ \Ker(d\sb{i})} is the $n$-th Moore chains
space, and
\w{Z\sb{n}\Wd:=\bigcap\sb{i=0}\sp{n}\ \Ker(d\sb{i})} is the $n$-th Moore cycles space.

The spiral spectral sequence for \w{\Wd} is that associated to the tower of
fibrations \wref{eqfibseq} (see \cite{DKStB} for further details).
It is then clear that \w{[f]\in \pi\sb{k}C\sb{n}\Wd}
survives to \w{E\sp{\infty}\sb{n,k}} if and only if it lifts to \w[,]{Z\sb{n}\Wd}
thus strictifying \wref{eqextaso} (or \wref[).]{eqfulldata} We deduce:
\end{remark}

\begin{prop}\label{pfiltrationn}
An element \w{\gamma\in \pi\sb{N}\|\Wd\|} is in filtration $n$ of the
homotopy spectral sequence for \w{\Wd} if and only if it is a canonical value for
the \wwb{n+1}st order simplicial operation associated to \wref[,]{eqfulldata}
for \w[.]{N=n+k}
\end{prop}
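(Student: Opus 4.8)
The strategy is to peel back both notions --- ``$\gamma$ lies in filtration $n$'' and ``$\gamma$ is a canonical value of the top operation'' --- until they meet at a common description: $\gamma$ is the image, under the composite \eqref{eqnatmap}, of a class in $\pi\sb{i}Z\sb{n}\Wd$. Recall from \S\ref{sfsss} that this is exactly the definition of filtration $n$: $\gamma\in\pi\sb{n+i}\|\Wd\|$ lies in filtration $n$ if and only if it is represented via \eqref{eqnatmap} by some $g\colon\bS{i}\to Z\sb{n}\Wd\hra\bW\sb{n}$.

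\emph{Filtration $n\Rightarrow$ canonical value.} Given such a $g$, the relations $d\sb{j}g=\ast$ (all $j$) make \eqref{eqtrunsimpobj} a strict $(n+1)$-truncated restricted simplicial space; adjoining the canonical augmentation $\vare\colon\bW\sb{0}\to\|\Wd\|$ together with the tautological higher homotopies supplied by the standard model \eqref{eqgeomreal} produces, in the sense of \S\ref{dcc}, a compatible collection for the longest morphism $\delta$ out of $\hX\sb{n+1}=\bS{i}$. By Lemma \ref{lindvalue} this induces a map out of the half-smash of $\bS{i}$ with the sphere $\bS{n}$ of Corollary \ref{cone}, and the splitting \eqref{eqhalfsmash} extracts the value, a class in $[\bS{n}\wedge\bS{i},\,\|\Wd\|]=\pi\sb{n+i}\|\Wd\|$. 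Since all face maps of $g$ vanish, the compatible collection is concentrated at the vertex of $\delta$ given by the iterated $0$-th face $d\sb{0}\circ\cdots\circ d\sb{0}$, so the polyhedral construction of the value reduces to the bar construction \eqref{eqgeomreal} applied to the $n$-skeletal sub-object $\bS{i}\otimes S\sp{n}\to\Wd$ classified by $g$; the value is thus the realization $\bS{n+i}=\|\bS{i}\otimes S\sp{n}\|\to\|\Wd\|$, which by \cite[\S 5.2]{DKStB} is the map defining \eqref{eqnatmap}. Hence the canonical value equals $\gamma$.

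\emph{Canonical value $\Rightarrow$ filtration $n$.} Conversely, suppose $\gamma$ is a canonical value. Then \eqref{eqtrunaugsimpobj} is a diagram in $\ho\Topa$ --- which already forces $[d\sb{j}g]=0$ for every $j\geq 0$ --- and the value is computed from a compatible collection whose components on the factors lying inside $\bW\sb{\bullet}\to\|\Wd\|$ are the tautological ones of \eqref{eqgeomreal}. Using the isomorphism $\pi\sb{i}C\sb{n}\Wd\cong C\sb{n}\pi\sb{i}\Wd$ of \cite[Lemma 2.7]{StoV}, I replace $g$ by a homotopic map with $d\sb{j}g=\ast$ strictly for $1\leq j\leq n$, so $g\in C\sb{n}\Wd$; the simplicial identities then give $\bd\sp{W\sb{n}}(g)\in Z\sb{n-1}\Wd$, and the remaining freedom in the compatible collection is precisely a nullhomotopy $H\colon\bd\sp{W\sb{n}}(g)\simeq\ast$ in $Z\sb{n-1}\Wd$. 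By the fibration sequence \eqref{eqfibseq}, the pair $(g,H)$ is exactly a lift $\tilde g\colon\bS{i}\to\operatorname{hofib}(\bd\sp{W\sb{n}})\simeq Z\sb{n}\Wd$, and the unwinding of the previous paragraph identifies the canonical value with the image of $[\tilde g]\in\pi\sb{i}Z\sb{n}\Wd$ under \eqref{eqnatmap}. Hence $\gamma$ lies in filtration $n$. (When $\gamma=0$ both conditions hold trivially, with $g=\ast$.)

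The one genuinely substantial point, shared by both directions, is the identification of the polyhedral recipe of \S\ref{dhho} --- fed the tautological homotopies of \eqref{eqgeomreal} --- with the honest realization map defining \eqref{eqnatmap}: one must check that the face-map polyhedron of $\delta$, which by Remark \ref{rtriangfree} is triangulated by a mapping space in the free resolution $\Fd\Dres$, maps into $\|\Wd\|$ so that, once the collapse forced by the vanishing of the faces of $g$ is taken into account, its top cell is carried onto the cell $\Deln{n}$ of the non-degenerate $n$-simplex of $\bS{i}\otimes S\sp{n}$ in \eqref{eqgeomreal}. Everything else is bookkeeping with the simplicial identities and the fibration sequence \eqref{eqfibseq}.
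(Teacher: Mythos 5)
Your proposal is correct and follows essentially the same route as the paper, which treats the Proposition as an immediate consequence of the construction in \S 3.2: there ``filtration $n$'' is taken to mean representability by some $g\colon\bS{i}\to Z\sb{n}\Wd$ via (3.6), and the canonical value is by definition the polyhedral value assembled from that same $g$ together with the tautological homotopies of the realization (3.10), so the two notions coincide once one identifies the assembled map on $\Pd{n+1}\up{n}\wedge\bS{i}\simeq\Sigma\sp{n}\bS{i}$ with the realization map $\|\bS{i}\otimes S\sp{n}\|\to\|\Wd\|$ of (3.6). You correctly isolate that identification as the only substantive point (your unwinding of the converse via the fibration sequence (3.2) is harmless extra detail, since the paper's definition of canonical value already presupposes $g$ landing strictly in $Z\sb{n}\Wd$).
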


\begin{mysubsection}{Toda brackets}
\label{stb}
We have used the homotopy spectral sequence of a restricted simplicial object as
a convenient shorthand for defining our higher order operations. However, they have
an explicit geometric-combinatorial construction which is independent of \wh and more
general than \wh such spectral sequences (see \cite{BJTurnHA} and the references
there for more details). We illustrate this for the oldest example of a secondary
operation: the ordinary Toda bracket of \cite{TodG,TodC}.

Assume that we are given a sequence of maps
\begin{myeq}\label{eqtoda}
\bX~\xra{h}~\bY~\xra{g}~\bZ~\xra{f}~\bW
\end{myeq}
\noindent in a pointed model category $\C$, with \w{f\circ g\sim\ast} and
\w[.]{g\circ h\sim\ast}
This can be rewritten as a $2$-truncated restricted augmented simplicial object
in \w[:]{\ho\C}
\mydiagram[\label{eqsimptodah}]{
  \bX \ar@/^{1.2pc}/[rr]\sp{d\sb{0}=h} \ar[rr]\sp{d\sb{1}=\ast}
  \ar@/_{0.7pc}/[rr]\sb{d\sb{0}=\ast} &&
  \bY \ar@/^{0.5pc}/[rr]\sp{d\sb{0}=g} \ar@/_{0.5pc}/[rr]\sb{d\sb{1}=\ast} &&
  \bZ\ar[rr]\sp{\vare=f} &&   \bW~,
}
\noindent as in \wref[.]{eqextaso}
We can try to make \wref{eqsimptodah} into a strict restricted simplicial
object as follows:
\mydiagram[\label{eqsimptodaexp}]{
\bX \ar[rr]\sp{d\sb{0}=h} \ar@{_{(}->}[rrd]\sp{d\sb{1}=\inc} \ar@{_{(}->}[rrdd]\sb{d\sb{2}=\inc} &&
\bY \ar[rr]\sp{d\sb{0}=g} \ar@{_{(}->}[rrd]\sb(0.2){d\sb{1}=\inc} &&
\bZ\ar[rrd]\sp{\vare=f} && \\
&& C\bX \ar[rru]\sb(0.8){d\sb{0}=G} \ar@{_{(}->}[rrd]\sp(0.3){d\sb{1}=\inc\sb{1}} &&
C\bY \ar[rr]\sp{\vare=F} &&\bW\\
&& C\bX \ar[rru]\sb(0.8){d\sb{0}=Ch} \ar@{_{(}->}[rr]\sb{d\sb{1}=\inc\sb{2}} &&
\Sigma\bX \ar[rru]\sb{\vare=\lra{f,g,h}}
}
\noindent where the space in each simplicial dimension, from left to right, is the wedge
of the relevant column, and \w{\inc\sb{i}:C\bX\hra\Sigma\bX} \wb{i=1,2} denote the two
inclusions of the upper and lower cones into \w{\Sigma\bX} (the pushout of
\w[).]{C\bX\leftarrow\bX\to C\bX}
Here \w{\lra{f,g,h}} is by definition the value of the Toda bracket associated to the two
choices of nullhomotopies \w{F:f\circ g\sim\ast} and \w{G:g\circ h\sim\ast}
(uniquely defined by the requirement that \wref{eqsimptodaexp} satisfies the
simplicial identities on the nose).

We see that \wref{eqsimptodaexp} realizes \wref{eqsimptodah} up to homotopy if
and only if the Toda bracket vanishes, so that we can choose a nullhomotopy
\w[,]{K:\lra{f,g,h}\sim\ast} allowing us to replace \w{\Sigma\bX} by \w[,]{C\Sigma\bX}
with \w{\vare=K} on the cone.

See \cite{BBSenT} for a detailed treatment of Toda brackets of arbitrary length
in a similar spirit (and compare \cite{BBGondH}).
\end{mysubsection}

%
%
\sect{Simplicial space approximations}
\label{cssa}

We would like to have a procedure for decomposing elements in the homotopy groups
\w{\pis\bX} of an arbitrary space $\bX$ in terms of higher order homotopy operations.
By the description in \S \ref{sdss}, we can do so by providing
a suitable simplicial space \w{\Wd} with an augmentation to $\bX$ \wh most simply,
if \w[.]{\|\Wd\|\simeq\bX}

\begin{mysubsection}{Constructing CW approximations}
\label{sccwa}
For simplicity we may assume $\bX$ is a \wwb{k-1}connected pointed space
with \w[.]{k\geq 2} We define a \emph{sequential approximation} to $\bX$
to be a sequence
\begin{myeq}\label{eqtower}
\W{k}~\xra{\prn{k}}~\W{k+1}~\xra{\prn{k+1}}~\W{k+2}~\to~\dotsc~
\W{n}~\xra{\prn{n}}~\W{n+1}~\to~\dotsc~
\end{myeq}
\noindent of simplicial spaces with augmentations
\w{\bve{n}:\W{n}\to\bX} (commuting with the maps \w[),]{\prn{n}} such that
  for each \w[:]{n\geq k}

\begin{enumerate}
\renewcommand{\labelenumi}{(\alph{enumi})~}
\item \w{\W{n}} is \wwb{n-k}skeletal (in the simplicial direction).
\item For each \w[,]{i\geq 0} \w{\Wn{i}{n}} is homotopy equivalent to a
  wedge of \wwb{k-1}connected spheres.
\item The map \w{\var{n}:\|\W{n}\|\to\bX} induced by \w{\bve{n}} (out of
  the geometric realization) is an $n$-equivalence: that is, it induces
  an isomorphism in \w{\pi\sb{i}} for \w{i<n} and an epimorphism for \w[).]{i=n}
  This implies that any $n$-skeleton of \w{\|\W{n}\|} is an $n$-skeletal
  CW approximation for $\bX$.
\end{enumerate}
\end{mysubsection}

Recall that a space is of \emph{finite type} if each homotopy group is finitely generated. For a simply-connected space, this condition is equivalent to each homology group being finitely generated.

\begin{prop}\label{pseqapp}
  Every \wwb{k-1}connected pointed space \wb{k\geq 2} has a sequential approximation; if
  $\bX$ is of finite type, we may assume each \w{\Wn{i}{n}} is homotopy equivalent
  to a finite wedge of simply-connected spheres.
\end{prop}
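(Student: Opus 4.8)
The plan is to construct the sequential approximation $\W{\bullet}$ by induction on $n \geq k$, building each $\W{n}$ as a simplicial space that, in each simplicial degree, is a wedge of $(k-1)$-connected spheres, together with a compatible augmentation to $\bX$. The natural tool is the Stover construction (the cofibrant/functorial simplicial resolution of a space by wedges of spheres, as in \cite{StoV} and \cite{DKStB}), which already produces a simplicial space $\bV\sb{\bullet}$ with each $\bV\sb{i}$ a wedge of spheres and $\|\bV\sb{\bullet}\| \simeq \bX$; the content here is to organize it into a \emph{skeletal tower} where $\W{n}$ is $(n-k)$-skeletal and the augmentation is an $n$-equivalence, rather than a single resolution realizing $\bX$ on the nose.

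First I would set up the base case: take $\W{k}$ to be the constant simplicial space on a wedge of $k$-spheres $\bV$ mapping to $\bX$ by a choice of generators of $\pi\sb{k}\bX$, so that $\|\W{k}\| = \bV \to \bX$ is $k$-connected (an iso on $\pi\sb{k}$ up to the epimorphism condition, and trivially on lower groups by $(k-1)$-connectivity). For the inductive step, given $\W{n}$ with $n$-equivalence $\var{n}\colon\|\W{n}\| \to \bX$, the homotopy fiber of $\var{n}$ is $(n-1)$-connected, and I would kill the group $\pi\sb{n}$ of that fiber by attaching one more simplicial dimension: choose a wedge of spheres $\bW'$ (in dimensions $\geq k$, hence still $(k-1)$-connected) mapping into $\Wn{n-k}{n}$ (or into a suitable chains/cycles object) realizing the relevant relations, form $\W{n+1}$ by adjoining $\bW'$ in simplicial degree $n-k+1$ with the appropriate face and degeneracy maps, and extend the augmentation. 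By construction $\W{n+1}$ is $(n-k+1)$-skeletal, each $\Wn{i}{n+1}$ is a wedge of $(k-1)$-connected spheres, the structure map $\prn{n}\colon\W{n}\to\W{n+1}$ is the skeletal inclusion, and the spiral (or Bousfield--Friedlander) spectral sequence of \S\ref{ssss}, together with the skeletal filtration, shows that $\|\W{n+1}\| \to \bX$ is now an $(n+1)$-equivalence: adding a cell in simplicial degree $n-k+1$ only affects $\pi\sb{j}\|\W{\bullet}\|$ for $j \geq n+1$ via \eqref{eqssss}, and it is chosen precisely to surject onto the obstruction. The finite-type addendum follows because at each stage the relevant homotopy groups are finitely generated, so finitely many spheres suffice in each degree.

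The main obstacle I expect is the bookkeeping at the inductive step: ensuring that the newly attached wedge $\bW'$ in degree $n-k+1$ can be given face maps $d\sb{0},\dots,d\sb{n-k+1}$ into $\Wn{n-k}{n}$ satisfying the simplicial identities (so that $\W{n+1}$ is genuinely a simplicial space, not merely a semisimplicial one) while simultaneously killing the correct subgroup of $\pi\sb{n}$ of the homotopy fiber and not disturbing the lower homotopy. This is exactly the kind of obstruction handled by the Stover/$\Pi$-algebra resolution machinery: one lets $\bW'$ map in via $d\sb{0}$ by the geometric cycles detecting the obstruction class and via $d\sb{i}$ ($i \geq 1$) by the basepoint (up to the degeneracies forced on the latching object $L\sb{n-k+1}$), which automatically satisfies the identities. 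So the step reduces to citing the functorial CW resolution and truncating/filtering it appropriately; the spectral sequence of Section \ref{chssss} then certifies the connectivity claim (c). I would carry out the details of the face-map compatibility by working in the cofibrant resolution $\Fd\Dres$ of the indexing category, as in Remark \ref{rtriangfree}, so that all the coherence is built in.
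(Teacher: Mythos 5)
Your local mechanism for a single attachment is exactly the paper's: let $d\sb{0}$ on the new sphere be a map into a cycles object $Z\sb{j}\Wd$ representing the class to be killed and let all other face maps be the basepoint, so the simplicial identities hold strictly and no coherence machinery (in particular no detour through $\Fd\Dres$) is needed. The genuine gap is in the global bookkeeping: you adjoin \emph{all} new spheres in the single top simplicial degree $n-k+1$, and this cannot work. An element of $\Ker(\pi\sb{n}\var{n})$ is detected in the spiral spectral sequence in some filtration $j$ with $0\leq j\leq n-k$, i.e.\ by a class in $E\sp{\infty}\sb{j,n-j}$, and to kill it by a $d\sp{1}$-differential you must attach an $(n-j)$-sphere in simplicial degree $j+1$. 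A cell placed in degree $n-k+1$ can only reach $E\sb{j,n-j}$ for $j<n-k$ through a higher differential $d\sb{r}$ with $r=n-k+1-j\geq 2$, and such differentials are determined by the diagram (they are themselves higher operations), not freely prescribable. Concretely, for $\bX=\bS{k}\times\bS{k}$ the Whitehead product $[\iot{1},\iot{2}]$ lies in $\Ker(\pi\sb{2k-1}\var{2k-1})$ in filtration $0$ and must be killed by a $(2k-1)$-sphere in simplicial degree $1$; your recipe would place a $k$-sphere in degree $k\geq 2$ and hope for a $d\sb{k}$. This is precisely why Case 2 of the paper's proof runs an \emph{inner} induction over the filtration $j$, attaching spheres in every simplicial degree $j+1$ from the minimal filtration of the kernel upward.

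The same issue affects surjectivity on $\pi\sb{n+1}$, which you also assign to the degree-$(n-k+1)$ attachment. A generator of $\Coker(\pi\sb{n+1}\var{n})$ with nontrivial mod $p$ Hurewicz image (an atomic class in the sense of Section \ref{catom}) cannot come from a cell in positive simplicial degree: a $k$-sphere in degree $n-k+1$ contributes only filtration-$(n-k+1)$ classes, i.e.\ canonical values of higher operations, while an $(n+1)$-sphere placed there with trivial faces contributes nothing to $\pi\sb{n+1}$ of the realization. Such generators must be wedged onto $\Wn{0}{n}$ in simplicial degree $0$, as in Case 1 of the paper's proof. Relatedly, your assertion that a cell in degree $n-k+1$ ``only affects $\pi\sb{j}\|\Wd\|$ for $j\geq n+1$'' is inconsistent with using that same cell to kill classes in $\pi\sb{n}$ of the fiber; the statement one actually needs, and which the paper verifies by requiring the attaching maps to land in the cycles objects, is that the new cells do not disturb the homotopy groups in degrees below $n$.
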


\begin{proof}
We construct \wref{eqtower} by induction on \w[\vsm:]{n\geq 0}

\noindent\textbf{Step 1:}\hs
For \w[,]{n=k} start with \w{\Wn{0}{k}} a wedge of $k$-spheres with a map
\w{\bve{k}:\Wn{0}{k}\to\bX} which induces a surjection in \w[.]{\pi\sb{k}}
We choose it to be minimal with this property (that is, no proper sub-wedge
has such a surjection). We let \w{\W{k}} be \w[\vsm.]{\co{\Wn{0}{k}}}

\noindent\textbf{Step 2:}\hs
For \w[,]{n=k+1} choose \w{\oWn{1}{k+1}} to be a (minimal) wedge of $k$-spheres having
a map \w{d\sb{0}:\oWn{1}{k+1}\to\Wn{0}{0}} inducing a surjection onto the kernel of
\w[.]{\bve{k}\sb{\#}:\pi\sb{k}\Wn{0}{k}\to\pi\sb{k}\bX} Let \w{F:C\Wn{1}{k+1}\to\bX}
be a nullhomotopy for \w[,]{\bve{k}\circ d\sb{0}} and set
\w{\tWn{0}{k+1}:=\Wn{0}{k}\vee C\Wn{1}{k+1}}
and \w[.]{\tWn{1}{k+1}:=\oWn{1}{k+1}\vee\tWn{0}{k+1}} We obtain a $1$-skeletal simplicial
space \w[,]{\tWd{k+1}} with an augmentation to $\bX$ given by \w[.]{\bve{k}\bot F}
The degeneracy \w{s\sb{0}:\tWn{0}{k+1}\to\tWn{1}{k+1}} is given by the inclusion.
The map \w{\tvar{k+1}:\|\tWd{k+1}\|\to\bX} thus induces an isomorphism in
\w[.]{\pi\sb{k}}

Now choose a (minimal) wedge of \wwb{k+1}spheres \w{\oWn{0}{k+1}} with a map
\w{e:\oWn{0}{k+1}\to\bX} mapping onto
\w[.]{\pi\sb{k+1}\bX\setminus\Image(\tvar{k+1}\sb{\#})}
Thus if we set \w{\Wn{0}{k+1}:=\tWn{0}{k+1}\vee\oWn{0}{k+1}} and
\w[,]{\Wn{1}{k+1}:=\tWn{1}{k+1}\vee\Wn{0}{k+1}} we obtain a $1$-skeletal
simplicial space \w{\W{k+1}} with an augmentation to $\bX$ such
that the induced map \w{\var{k+1}:\|\W{k+1}\|\to\bX} induces a surjection in
\w{\pi\sb{k+1}} (and so is a \wwb{k+1}equivalence)\vsm.

\noindent\textbf{Step 3:}\hs Assume given \w{\W{n-1}} as above, and let
\w{m\geq n-1} be maximal such that the map \w{\var{n-1}:\|\W{n-1}\|\to\bX}
(induced by \w[)]{\bve{n-1}:\W{n-1}\to\bX} is an $m$-equivalence.
If \w[,]{m\geq n} we set \w[;]{\W{n}:=\W{n-1}}
otherwise \w[,]{m=n-1} and we construct \w{\W{n}} by an inner induction as follows:

\begin{enumerate}
\renewcommand{\labelenumi}{Case \arabic{enumi}.~}
\item If \w{\pi\sb{n-1}\vare:\pi\sb{n-1}\|\W{n-1}\|\to\pi\sb{n-1}\bX} is injective
  (and thus an isomorphism), necessarily \w{\pi\sb{n}\vare} is not surjective. Choose
  a minimal wedge of $n$-spheres \w{\bW'} and a map \w{\vare':\bW'\to\bX}
  mapping onto \w[,]{\pi\sb{n}\bX\setminus\Image(\vare\sb{\#})} and set
  \w[,]{\Wn{0}{n}:=\Wn{0}{n-1}\vee\bW'} with the necessary degeneracies in higher
  simplicial dimensions. Note that \w{\vare'} has non-trivial Hurewicz image.
\item If \w{\pi\sb{n-1}\var{n-1}} is not injective,
  let \w[,]{K:=\Ker(\pi\sb{n-1}\var{n-1})} with \w{j\geq 0} the minimal filtration
  of elements of $K$, and set \w{\Wk{n}{i}:=\W{n-1}} for each \w[,]{i\leq j} and
  \w{\Wnk{i}{n}{j+1}:=\Wn{i}{n}} for \w{0\leq i<j}
  Next, choose a minimal set of generators \w{\{\alpha\sb{i}\}\sb{i\in I}} for
  the elements of $K$ in filtration $j$, represented by maps
  \w{g\sb{i}:\bS{n\sb{i}}\to Z\sb{j}\Wnk{j}{n}{0}} (see \S \ref{rextaso}), and let
  \w[.]{\Wnk{j}{n}{j+1}:=\Wn{j+1}{n-1}\vee\bigvee\sb{i\in I}\bS{n\sb{i}}} If we let
  \w{d\sb{0}=g\sb{i}} on \w[,]{\bS{n\sb{i}}} and other face maps vanish there (and
  add the necessary degeneracies), we obtain a new simplicial space \w{\Wk{n}{j+1}}
  for which the elements \w{\{\alpha\sb{i}\}\sb{i\in I}}
  are no longer in \w[.]{K:=\Ker(\pi\sb{n-1}\var{n-1})} Note that
  because \w{g\sb{i}} lands in \w[,]{Z\sb{j}\W{n-1}} it represents a permanent cycle
  in the spectral sequence of \S \ref{shssso} (see Remark \ref{rextaso}), so no
  elements of \w{\pi\sb{n+k-1}\|\W{n-1}\|} have been killed by this process.

  Proceeding in this way \wh with \w{\Wk{n}{s+1}} obtained from
  \w{\Wk{n}{s}} in stage $s$ by killing the elements in $K$ in filtration $s$ \wh
  we obtain \w{\Wk{n}{n}} with \w{\pi\sb{n-1}\tvar{n}} injective (there is
  no need to proceed beyond filtration $n$ for dimension reasons).

  We then wedge on $n$-spheres in simplicial dimension $0$ as in Case 1
  to obtain \w[.]{\W{n}}
\end{enumerate}
\end{proof}

The following fact will be used below and in \cite{BBSenA}:

\begin{prop}\label{pnoratho}
Let \w{\Wd} be a simplicial space in which each \w{\bW\sb{n}} is weakly equivalent to a
wedge of simply-connected rational spheres. Then the homotopy spectral sequence
for \w{\Wd} collapses at the \ww{E\sp{2}}-term.
\end{prop}

\begin{proof}
Using the differential graded Lie model of \cite{QuiR} for \w[,]{\Wd}  we see that
each \w{\bW\sb{n}} is \emph{coformal} (that is, has a cofibrant model with
$0$ differential).  This implies that every \w[,]{[\alpha]\in E\sp{2}\sb{n,k}}
represented by a Moore cycle \w[,]{\alpha\in Z\sb{n}\pi\sb{k}\Wd} is in fact
represented by \w{\hat{f}:S\sp{k}\to Z\sb{n}\Wd} \wwh so that it fits into a diagram
of the form \wref[.]{eqfulldata}
\end{proof}

\begin{remark}\label{rnoratho}
Of course, when \w{\bX=\|\Wd\|} itself is not coformal, the induced map
\w{f:\Sigma\sp{r}\bS{s}\to\bX} in \S \ref{dshoho} can still be non-trivial, so that
\w{[f]\in\pi\sb{r+s}\bX} may be the value of a higher homotopy operation, such as
a rational higher Whitehead product (see \cite{AArkS}).
\end{remark}

%
%
\sect{Atomic maps}
\label{catom}

Now that we have a procedure for generating new elements in \w{\pis\bX} by higher order
operations, we would like to show which elements are indecomposable with respect to such
operations.

\begin{thm}\label{tjct}
  If  $\bX$ is $k$-connected for \w[,]{k\geq 1} any homotopy class
  \w{\varphi\in \pi\sb{n}\bX} is generated of higher order over \w{\Pi\sb{>1}}
  (\S \ref{dgho}) by the atomic maps (\S \ref{datomicc}).
\end{thm}

\begin{proof}
We may assume that all spaces are localized at a prime \w[,]{p\geq 0} and consider the
sequential approximations \w{\W{m}} \wb{k\leq m\leq n} for $\bX$ constructed
in the proof of Proposition \ref{pseqapp}, which we may also take to be
$p$-local and $k$-connected in each simplicial dimension. Assume that $\varphi$
is not atomic, so that \w{\varphi\sb{\ast}:\HiF{n}{\bS{n}}\to\HiF{n}{\bX}}
is trivial.  By replacing $\bX$ by its \wwb{n+1}skeleton, we may assume it is
\wwb{n+1}dimensional (without affecting \w[).]{\pi\sb{n}\bX}

When \w{p>0} and \w{\varphi\sb{\ast}:\HiZp{n}{\bS{n}}\to\HiZp{n}{\bX}} is non-zero, then
the Hurewicz image of $\varphi$ is necessarily divisible by \w{p\sp{r}}
for some \w[.]{r\geq 1}

Now if \w{\bZ\,\xra{\tau}\,\hX\,\xra{i}\,\bX\,\xra{q}\,\Sigma\bZ} is the cofibration
sequence for some CW structure on $\bX$ with $n$-skeleton $\hX$,
with \w{\bZ:=\bigvee\sb{i=1}\sp{m}\,\bS{n}\lolr{i}} (here \w{\lra{i}} is just an index).
We have the associated Hurewicz diagram:
\mydiagram[\label{eqhurewicznsk}]{
  \pi\sb{n}\bZ \ar[r]\sp(0.6){\tau\sb{\#}} \ar[d]\sp{h\sb{n}}\sb{\cong} &
  \pi\sb{n}\hX \ar@{->>}[r]\sp{i\sb{\#}} \ar[d]\sp{h\sb{n}} &
  \pi\sb{n}\bX \ar[d]\sp{h\sb{n}}\ar[r] & \pi\sb{n}\Sigma\bZ=0 \ar[d]\\
\HiZp{n}\bZ \ar[r]\sp{\tau\sb{\ast}} &
\HiZp{n}{\hX} \ar@{->>}[r]\sp{i\sb{\ast}}  & \HiZp{n}{\bX} \ar[r] & \HiZp{n}{\Sigma\bZ}=0
}
\noindent Since \w{i\sb{\#}} is surjective, a representative $f$ for $\varphi$
lifts to \w{f':\bS{n}\to\hX} (cellular approximation), and since \w{h\sb{n}([f])}
is non-trivial by assumption, so is \w[.]{h\sb{n}([f'])}
However, we assumed \w{h\sb{n}([f])=p\sp{r}\cdot\hat{y}} for some
\w{\hat{y}\in\HiZp{n}{\bX}} and \w[,]{r\geq 1} so there is \w{y\in\HiZp{n}{\hX}} with
\w[.]{i\sb{\ast}(y)=\hat{y}} This means that
\w[,]{p\sp{r}\cdot y-h\sb{n}([f'])\in\Ker(i\sb{\ast})=\Image(\tau\sb{\ast})}
so if we write this expression as \w{\tau\sb{\ast}(h\sb{n}(\alpha))} for some
\w[,]{\alpha\in\pi\sb{n}\bZ} we see that we may replace our choice \w{f'} by
\w{f'':=f'+\tau\sb{\#}(\alpha)} with \w[,]{i\sb{\#}(f'')=f}  but now
\w{h\sb{n}(f'')} itself divisible by \w{p\sp{r}} in \w[.]{\HiZp{n}{\bX}}

Now assume \w[,]{p\geq 0} and let $\hX$ be (an $n$-skeleton of) \w[,]{\|\W{n}\|}
by definition (see \S \ref{sccwa}). Thus $\varphi$ factors through
\w[,]{f\up{n}:\bS{n}\to\|\W{n}\|} and we may assume that this is not atomic, either.
The class \w{[f\up{n}]\in\pi\sb{n}\|\W{n}\|} can be written
(non-uniquely) as a sum of elements in various filtrations. All those in positive
filtration are canonical values of higher order simplicial operations, by Proposition
\ref{pfiltrationn}. The elements in filtration $0$ factor through \w[,]{\Wn{0}{n}}
which is a wedge of spheres of dimensions \www[.]{\leq n} Thus by Hilton's Theorem
(see \cite{HilH}), each such element  is a sum of compositions of iterated Whitehead
products on the fundamental classes of these spheres. All such summands are decomposable,
except possibly for those which factor through the sub-wedge product \w{\bW'}
consisting of the $n$-spheres.  However, if
\w{f\up{n}\sb{\ast}:\HiZp{n}{\bS{n}}\to\HiZp{n}{\|\W{n}\|}} is non-zero and \w[,]{p>0}
then (because \w{f\up{n}} is not atomic) its Hurewicz image is $p$-divisible,
and since the Hurewicz map for \w{\bW'} is an isomorphism in dimension $n$,
this means the map \w{f\up{n}} is itself $p$-divisible, and thus decomposable.
For \w[,]{p=0} any non-zero map
\w{f\up{n}\sb{\ast}:\HiQ{n}{\bS{n}}\to\HiQ{n}{\|\W{n}\|}} is atomic.
\end{proof}

%
%
\sect{Atomic maps for spheres}
\label{cambs}

There is little hope of describing all atomic maps for general $\bX$, since the
Hurewicz image is not known even stably, in general (see \cite[\S 2]{JCohDS}).
However, we can do so for spheres: since the construction of \S \ref{sccwa}
is trivial for \w{\bS{k}} itself, more precisely, we consider a connected cover.
All spaces in this section are localized at a prime $p$.

\begin{thm}\label{ttransg}
For \w[,]{k\geq 2} let \w{\bX} be the homotopy fiber of the fundamental class
\w{\vare\sb{k}:\bS{k}\to\KZp{k}} (that is, the \wwb{r-1}connected cover of the $k$-sphere
for \w[),]{r:=k+2p-3} and assume that the lift \w{f:\bS{n}\to\bX} of \w{\varphi:\bS{n}\to\bS{k}}
is atomic for $p$. Then the homotopy cofiber of $\varphi$  supports a non-trivial mod $p$
cohomology operation.
\end{thm}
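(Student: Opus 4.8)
The plan is to exploit that the homotopy cofiber \w{\Cof{p}} of the connective cover map \w{p:\bX\to\bS{k}} is, through a range of dimensions, the Eilenberg--MacLane space \w[,]{\KZp{k}} so that a cohomology class witnessing the atomicity of \w{f} can be transported from \w{\Cof{f}} to \w{\Cof{\varphi}} and read off there as a cohomology operation. Since \w[,]{\varphi\simeq p\circ f} the standard relation among the cofibers of a composable pair gives a cofiber sequence
\[
\Cof{f}~\xra{\alpha}~\Cof{\varphi}~\xra{\beta}~\Cof{p}~,
\]
where \w{\alpha\circ i=j\circ p} (with \w{i:\bX\hra\Cof{f}} and \w{j:\bS{k}\hra\Cof{\varphi}} the inclusions) and \w{\beta\circ j=c} (with \w{c:\bS{k}\hra\Cof{p}} the inclusion). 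The theorem will follow once we show that \w{\beta} is non-zero on \w[.]{\HF{n+1}{-}}

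This is where the hypothesis enters. Since \w{f} is atomic, \w{f\sp{\ast}:\HF{n}{\bX}\to\HF{n}{\bS{n}}\cong\FF_p} is onto; hence in the long exact cohomology sequence of \w{\bS{n}\xra{f}\bX\xra{i}\Cof{f}} the connecting map \w{\HF{n}{\bS{n}}\to\HF{n+1}{\Cof{f}}} vanishes, so \w{i\sp{\ast}:\HF{n+1}{\Cof{f}}\to\HF{n+1}{\bX}} is injective. As \w[,]{n>k} the complex \w{\Cof{\varphi}=\bS{k}\cup\sb{\varphi}e\sp{n+1}} has reduced mod $p$ cohomology exactly \w{\FF_p} in degrees \w{k} and \w[;]{n+1} let \w{y} generate \w[.]{\HF{n+1}{\Cof{\varphi}}} From \w{\alpha\circ i=j\circ p} we get \w[,]{i\sp{\ast}\alpha\sp{\ast}(y)=p\sp{\ast}j\sp{\ast}(y)} while \w{j\sp{\ast}(y)\in\HF{n+1}{\bS{k}}=0} because \w[;]{n+1>k} injectivity of \w{i\sp{\ast}} then forces \w[.]{\alpha\sp{\ast}(y)=0} Exactness of \w{\HF{n+1}{\Cof{p}}\xra{\beta\sp{\ast}}\HF{n+1}{\Cof{\varphi}}\xra{\alpha\sp{\ast}}\HF{n+1}{\Cof{f}}} yields a non-zero \w{z\in\HF{n+1}{\Cof{p}}} with \w[.]{\beta\sp{\ast}(z)=y} Moreover \w{\beta\circ j=c} and \w{c\sp{\ast}} is an isomorphism on \w{\HF{k}{-}} (using \w[),]{\HF{k-1}{\bX}=\HF{k}{\bX}=0} so \w{\beta\sp{\ast}} carries the bottom class of \w{\Cof{p}} to that of \w{\Cof{\varphi}} while carrying \w{z} to \w[;]{y} as explained next, this is precisely the statement that \w{\Cof{\varphi}} supports a non-trivial mod $p$ cohomology operation.

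To identify this operation, and to see that it is primary in the cases of Theorem B, I would compare \w{\Cof{p}} with \w[.]{\KZp{k}} The fibration \w{\bX\to\bS{k}\xra{\vare\sb{k}}\KZp{k}} supplies a nullhomotopy of \w[,]{\vare\sb{k}\circ p} hence a map \w{\zeta:\Cof{p}\to\KZp{k}} with \w[;]{\zeta\circ c=\vare\sb{k}} by Ganea's fibration--cofibration comparison, the homotopy fiber of \w{\zeta} is the join \w[,]{\bX\ast\Omega\KZp{k}=\bX\ast\KZp{k-1}} which is \wwb{r+k-1}connected because \w{\bX} is \wwb{r-1}connected and \w{\KZp{k-1}} is \wwb{k-2}connected. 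Thus \w{\zeta} is \wwb{r+k}connected, so \w{\zeta\sp{\ast}} is an isomorphism on \w{\HF{m}{-}} for \w[.]{m\leq r+k-1} For every \w{\varphi} appearing in Theorem B one checks \w{n+1\leq r+k-1} (with equality exactly for \w[,]{\nu} \w{\sigma} and \w[),]{\eta\sb{2}} so \w{z=\zeta\sp{\ast}(\omega)} for some \w[;]{\omega\in\HF{n+1}{\KZp{k}}} putting \w{g:=\zeta\circ\beta} and expanding \w{\omega} in the standard basis of \w{\HsF{\KZp{k}}} -- admissible monomials \w{\theta\iota\sb{k}} and cup powers \w{\iota\sb{k}\sp{e}} when \w{n+1\geq 2k} -- gives \w[,]{g\sp{\ast}(\omega)=\sum\theta\,(g\sp{\ast}\iota\sb{k})=y\neq 0} that is, \w{\Cof{\varphi}} supports the operation \w[.]{\omega} When \w{n+1<2k} this is a primary operation of degree \w{n+1-k} (so \w{\Sq{2}} for \w[);]{\eta\sb{k}} when \w{n+1=2k} the class \w{\iota\sb{k}\sp{2}=\Sq{k}\iota\sb{k}} (\w{k} even) works, recovering \w{\Sq{4}} and \w{\Sq{8}} for \w{\nu} and \w[;]{\sigma} the remaining instances of \w{\alpha\sb{1}(p)} follow from the same computation together with the Steenrod module structure of \w[.]{\HsF{\KZp{k}}}

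The crux is the range statement: pinning down the homotopy type of \w{\hocofib(\bX\to\bS{k})} through the required dimension, and verifying \w[,]{n+1\leq r+k-1} which is sharp for the Hopf-invariant-one maps and so leaves no slack in the Ganea estimate. A subsidiary point, present only when \w[,]{n+1>2k} is arranging that \w{\omega} can be chosen among the indecomposables of \w{\HsF{\KZp{k}}} rather than a genuine cup power; this should follow from the fact that \w{j\sp{\ast}(y)=0} already removes the product classes pulled back from \w[,]{\bS{k}} together with an instability/Bockstein argument in \w[.]{\HsF{\KZp{k-1}}}
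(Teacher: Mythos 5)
Your first half is sound and, in substance, coincides with the paper's own reduction: the octahedral cofiber sequence $\Cof{f}\xra{\alpha}\Cof{\varphi}\xra{\beta}\Cof{g}$ (the paper's $\beta$ is the map $u$ in its diagram of cofibration sequences), together with the observation that atomicity of $f$ makes $i\sp{\ast}:\Hu{n+1}{\Cof{f}}{\Fp}\to\Hu{n+1}{\bX}{\Fp}$ injective and hence $\alpha\sp{\ast}(y)=0$, correctly shows that the top class $y$ of $\Cof{\varphi}$ pulls back from some $z\in\Hu{n+1}{\Cof{g}}{\Fp}$. The gap is in the step that converts $z$ into an actual cohomology operation, i.e.\ in lifting $z$ along $\zeta\sp{\ast}=\rho\sp{\ast}:\Hu{n+1}{\KZp{k}}{\Fp}\to\Hu{n+1}{\Cof{g}}{\Fp}$. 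Your Ganea estimate gives surjectivity of $\zeta\sp{\ast}$ only for $n+1\leq r+k-1$, and the theorem must be proved for \emph{every} $n$ in which an atomic class could live: its whole purpose (via Corollary \ref{catomic}) is to \emph{rule out} atomic classes in high dimensions, so you cannot restrict to ``the $\varphi$ appearing in Theorem B'' — that is the conclusion, not a hypothesis. Even among those classes the range fails (e.g.\ $\sigma\sb{k}\in\pi\sb{k+7}\bS{k}$ for $k<8$ has $n+1=k+8>2k=r+k-1$, and similarly $\nu\sb{3}$), so the proposal as written proves the statement only in a metastable window $r\leq n\leq r+k-2$ and is silent outside it.

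What the paper does instead, precisely to get around this, is to show that the detecting class $\lambda\in\Hu{n}{\bX}{\Fp}$ (which is necessarily \emph{indecomposable} — a fact you never use) is \emph{transgressive} in the Serre spectral sequence of $\bX\to\bS{k}\to\KZp{k}$; transgressivity of $\lambda$ is exactly the assertion that $\bdel\sp{\ast}\blam$ (a preimage of $y$ under $\beta\sp{\ast}$) lies in the image of $\rho\sp{\ast}$, in all degrees. Establishing this requires real work outside your range: the paper first computes $\Hus{\bX}{\Fp}$ via the Wang sequence for $\KZp{k-1}\to\bX\to\bS{k}$, identifies the indecomposables, and then runs a comparison of Serre spectral sequences against the path–loop fibration, analyzing the kernel $\KK{r}$ of the comparison map degree by degree — including, at odd primes, re-choosing the representatives $\cP{j}(\iot{k-1})$ modulo $\gam{2k-1}$ so that the potentially obstructing differentials vanish. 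None of this is subsumed by the connectivity of the Ganea fiber $\bX\ast\KZp{k-1}$. So you need either to supply an argument that $z$ itself (not all of $\Hu{n+1}{\Cof{g}}{\Fp}$) lifts when $n+1>r+k-1$, or to replace the Ganea step by a transgression argument of the paper's type; your closing remark about indecomposability of $\omega$ is a separate (and for the stated theorem inessential) issue and does not address this.
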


\begin{proof}
Our hypothesis is  that the image of \w{[f]\in\pi\sb{n}\bX} under the Hurewicz
homomorphism \w{\pi\sb{n}\bX\to\HiF{n}{\bX}} is non-trivial.
There is thus a class \w{\lambda\in\HuF{n}{\bX}} with \w[.]{f\sp{\ast}(\lambda)\neq 0}
Such a $\lambda$ is necessarily indecomposable in the unstable cohomology algebra
\w{\HuFs{\bX}} (that is, it does not decompose in terms of any unstable
cohomology operations, including the cup product).

We first observe that it suffices to show that any such indecomposable class $\lambda$ is
transgressive in the $\Fp$-cohomology Serre spectral sequence of the fibration sequence
\w[.]{\bX\hra\bS{k} \to\KZp{k}}

If we write \w{g:\bX\to\bS{k}} for the covering map, then \w{\varphi:=g\circ f} fits into
a commuting diagram with horizontal homotopy cofibration sequences, as follows:
\mysdiag[\label{eqcofseqs}]{
  \bS{n} \ar[d]^{f}\ar[rr]^{\varphi} && \bS{k} \ar[d]^{=}\ar[rr]^{j} && \Cof{\varphi}
  \ar[d]^{u}\ar[rr]^{\delta} && \bS{n} \ar[d]^{\Sigma f}\\
  \bX \ar[rr]^{g} && \bS{k} \ar[rr]^{s} \ar[rd]^{\vare\sb{k}} && \Cof{g} \ar[rr]^{\bdel}
  \ar[ld]^{\rho} && \Sigma\bX\\
  &&&  \KZp{k} &&&
}
\noindent where $t$ represents \w[,]{\eps{k}} and the map $\rho$ exists by the universal
properties of \w{\Cof{g}} and the fibration sequence \w[.]{\bX \xra{g}
\bS{k} \xra{\vare\sb{k}} \KZp{k}}

Since by assumption \w{f\sp{\ast}\lambda=\eps{n}} (the fundamental
class in \w[),]{\HuF{n}{\bS{n}}} under suspension we have
\w{\blam\in\HuF{n+1}{\Sigma\bX}} with \w[.]{(\Sigma
f)\sp{\ast}\blam=\eps{n+1}\in\HuF{n+1}{\bS{n+1}}}

If $\lambda$ is transgressive, there is a class \w{c\in
\HuF{n+1}{\KZp{k}}} with \w[.]{\rho\sp{\ast}c=\bdel\sp{\ast}\blam}
But $c$ is necessarily of the form \w{\theta(\iot{k})} for
\w{\iot{k}\in\HuF{k}{\KZp{k}}} the fundamental class, and a
diagram chase in:
\mywdiag[\label{eqccofseqs}]{
H\sp{k}{\bS{k}}\ni \eps{k}  & a\in H\sp{k}{\Cof{\varphi}}
\ar[l]^{j\sp{\ast}}  \ar@/^2em/@{-->}[r]^{\theta} &
H\sp{n+1}{\Cof{\varphi}}\ni b
& \ar[l]_{\delta\sp{\ast}} \eps{n+1}\in H\sp{n+1}{\bS{n+1}} \\
H\sp{k}{\bS{k}}\ni \eps{k} \ar[u]^{=} & & H\sp{n+1}{\Cof{g}}\ni
\bdel\sp{\ast}(\blam) \ar[u]^{u\sp{\ast}} &
\blam\in H\sp{n+1}{\Sigma\bX} \ar[l]_(0.4){\bdel\sp{\ast}} \ar[u]^{(\Sigma f)\sp{\ast}}\\
& \iot{k}\in H\sp{k}{\KZp{k}} \ar[ul]_{t\sp{\ast}}
\ar@/_2em/@{-->}[r]^{\theta}& c\in H\sp{n+1}{\KZp{k}}
\ar[u]^{\rho\sp{\ast}} }
\noindent shows that \w[,]{b=\theta(a)} where \w{a\in
\HuF{k}{\Cof{\varphi}}} has \w[,]{j\sp{\ast}a=\eps{k}} and \w{b\in
\HuF{n+1}{\Cof{\varphi}}} has \w[.]{\delta\sp{\ast}\eps{n+1}=b}
This $\theta$ is the non-trivial cohomology operation required by
our Theorem. We have thus reduced the proof of the Theorem to
showing that any indecomposable class $\lambda$ is transgressive
in the Serre spectral sequence for \w[.]{\bX\to\bS{k} \to\KZp{k}}

We compute \w[,]{\HuFs{\bX}} using the associated fibration sequence
\w[.]{\KZp{k-1} \xra{u}\bX\to\bS{k}} Since the base is a sphere, the Serre
spectral sequence reduces to the Wang long exact sequence of \cite{HCWangHF}:
\myrdiag[\label{spseqX}]{
\cdots\HuF{n}{\KZp{k-1}} \ar[rr]^{d\sb{k}} & &\HuF{n-k+1}{\KZp{k-1}} \ar[rr]^(0.66){\rho} &&
  \HuF{n+1}{\bX}\ar[lllld]\sb{\us} \\
 \HuF{n+1}{\KZp{k-1}} \ar[r] & \cdots &&
}
\noindent  We write \w{\Euz{p}} for the image of  \w{\HuFs{\bX}} in
\w{\HuFs{\KZp{k-1}}} \wwh that is, \w[\vsn.]{\Euz{p}:=\us\HuFs{\bX}}

From here on we distinguish between two cases\vsm:

 \noindent\textbf{Case I: The prime $2$\vsm.}

 For \w[,]{p=2} we have:
\begin{myeq}\label{eqemcoht}
\HuTs{\KZt{k}} \cong\Ft[\Sq{I}(\iot{k})~|\ I \mbox{admissible}, i\sb{s}\neq 1, \exc(I)<k].
\end{myeq}
\noindent (see  \cite[Proposition 3.5.8]{Koc}). Here the multi-index $I$ is of the form
\w{(i\sb{0},\dotsc,  i\sb{s})} with \w[,]{\Sq{I}:=\Sq{i\sb{0}}\dotsc\Sq{i\sb{s}}} and
\w{\exc(I)} is the excess.

The exact sequence \wref{spseqX} is determined by the fact that \w{d\sb{k}} is an
(anti-) derivation, which sends \w{\iot{k-1}} to $1$ and \w{\Sq{I}(\iot{k-1})} to $0$
for \w[.]{I\neq 0} It follows that
\begin{myeq}\label{eqemett}
\Euz{2}~=~\Ft[\iot{k-1}\sp{2},\ \Sq{I}(\iota_{k-1}) ~|\ 0\neq I \mbox{ admissible}, \
  i\sb{s}\neq1, \ \exc(I)<k-1]~.
\end{myeq}
\noindent We choose generators in \w{\HuTs{\bX}} corresponding to the polynomial algebra
generators \w{\iot{k-1}\sp{2}} and \w[,]{\Sq{I}{\iot{k-1}}} and also use the same notation
for them. Thus \w{\HuTs{\bX}} is generated as an \ww{\Euz{2}}-module by $1$ and
\w{\gam{2k-1}} \wb[,]{=\rho(\iot{k-1})} which lies in \w{\HuT{2k-1}{\bX}} under the exact
sequence \wref[).]{spseqX}
Therefore, the indecomposable classes in \w{\HuTs{\bX}} consist of \w{\gam{2k-1}}
and \w{\Sq{j}(\iot{k-1})} for \w[.]{j\leq k-1} Note that the notation
\w{\Sq{j}(\iot{k-1})} for a class in \w{\HuTs{\bX}} which is sent to the usual
\w{\Sq{j}(\iot{k-1})} in \w{\HuTs{\KZt{k}}} is not meant to imply that it is
in the image of a cohomology operation. 

We now verify that all these classes are transgressive in the \ww{\Ft}-cohomology Serre
spectral sequence for \w[,]{\bX\to\bS{k}\to\KZt{k}} using the following map $u$
of fibration sequences:
\mysdiag[\label{fibn}]{
 \KZt{k-1} \ar[d] \ar[r]^(0.65){u} & \bX \ar[d] \\
                   P\KZt{k} \ar@{->>}[d] \ar[r]  & \bS{k} \ar@{->>}[d]\sp{\eps{k}} \\
                   \KZt{k}  \ar[r]^{=}             & \KZt{k}\\
                   \cF{1} \ar[r]\sp{u} & \cF{2}
}
\noindent where the left hand side is the path-loop fibration.
This induces a map \w{\us} of the corresponding Serre spectral sequences, with
\w[.]{\Eis{r}{\us }:\Eis{r}{\cF{2}} \to\Eis{r}{\cF{1}}}
%
%
\begin{figure}[htbp]

\begin{center}
  \pgfsetshortenend{3pt}
  \pgfsetshortenstart{3pt}
  \begin{tikzpicture}[scale=1.1,line width=1pt]
    \draw[help lines] (-6.3,-2.3) grid (7.3,10.3);

  {\draw[fill]
    (-5.5,-2.3) circle (0pt) node[below=-1pt] {$0$}
    (-4.5,-2.3) circle (0pt) node[below=-1pt] {$1$}
    (-3.5,-2.3) circle (0pt) node[below=-1pt] {$\cdot$}
    (-2.5,-2.3) circle (0pt) node[below=-1pt] {$\cdot$}
    (-1.5,-2.3) circle (0pt) node[below=-1pt] {$\cdot$}
    (-.5,-2.3) circle (0pt) node[below=-1pt] {$\cdot$}
    (.5,-2.3) circle (0pt) node[below=-1pt] {$k$}
    (1.5,-2.3) circle (0pt) node[below=-1pt] {$k+1$}
    (2.5,-2.3) circle (0pt) node[below=-1pt] {$k+2$}
    (3.5,-2.3) circle (0pt) node[below=-1pt] {$\cdot$}
   (4.5,-2.3) circle (0pt) node[below=-1pt] {$\cdot$}
   (5.5,-2.3) circle (0pt) node[below=-1pt] {$\cdot$}
   (6.5,-2.3) circle (0pt) node[below=-1pt] {$2k$}
    (.3,-2.8) circle (0pt) node[below=-1pt] {  $\HuTs{\KZt{k}} $}
    ;}

 {\draw[fill]
    (-6.6,-1.4) circle (0pt) node[below=-1pt] {$0$}
    (-6.6,-.4) circle (0pt) node[below=-1pt] {$1$}
    (-6.6,.6) circle (0pt) node[below=-1pt] {$\cdot$}
    (-6.6,1.6) circle (0pt) node[below=-1pt] {$\cdot$}
    (-6.6,2.6) circle (0pt) node[below=-1pt] {$\cdot$}
    (-6.6,3.6) circle (0pt) node[below=-1pt] {$k-1$}
    (-6.6,4.6) circle (0pt) node[below=-1pt] {$k$}
    (-6.6,5.6) circle (0pt) node[below=-1pt] {$k+1$}
    (-6.6,6.6) circle (0pt) node[below=-1pt] {$\cdot$}
    (-6.6,7.6) circle (0pt) node[below=-1pt] {$\cdot$}
   (-6.6,8.6) circle (0pt) node[below=-1pt] {$2k-2$}
   (-6.6,9.6) circle (0pt) node[below=-1pt] {$2k-1$}
    (-7.6,2.8) circle (0pt) node[below=-1pt,rotate=90] {  $\HuTs{\KZt{k-1}} $}
    ;}

 {\draw[fill]
    (-5.5,-1.4) circle (0pt) node[below=-1pt] {$1$}
    (.5,-1.4) circle (0pt) node[below=-1pt] {$\iot{k}$}
    (2.5,-1.4) circle (0pt) node[below=-1pt] {\scriptsize $\Sq{2}\iot{k}$}
    (5.5,-1.4) circle (0pt) node[below=-1pt] {\scriptsize $\Sq{k-1}\iot{k} $}
    (6.5,-1.4) circle (0pt) node[below=-1pt] {\scriptsize $\iot{k}^2$}
    (-5.5,3.6) circle (0pt) node[below=-1pt] {$\iot{k-1}$}
    (-5.5,5.6) circle (0pt) node[below=-1pt] {\scriptsize $\Sq{2}\iot{k-1}$}
    (-5.5,8.6) circle (0pt) node[below=-1pt] {\scriptsize $\iot{k-1}^2$}
    (.5,3.6) circle (0pt) node[below=-1pt] {\scriptsize $\iot{k-1}\iot{k}$}
    ;}

  {\draw[->]
(-5.3,3.4) -- (.4,-1.3) node[midway,left]{\scriptsize $d_k$}
;}

  {\draw[->]
(-5.1,5.2) -- (2.4,-1.3) node[midway,left]{\scriptsize $d_{k+2}$}
;}

{\draw[->]
(.3,3.4) -- (6.4,-1.3) node[midway,left]{\scriptsize $d_k$}
;}

{\draw[->,out=310,in=140]
(-5.3,8.4) to node[midway,left]{\scriptsize $d_{2k-1}$}  (5.4,-1.3)
;}
\end{tikzpicture}
\end{center}
\caption{Spectral sequence for the fibration sequence $\cF{1}$.}
\label{SpseqEM}
\end{figure}

Observe that the first differential for \w{\cF{1}} is \w[,]{d\sb{k}} determined by
\w[.]{d\sb{k}(\iot{k-1})= \iot{k}} (See Figure \ref{SpseqEM}).

The key step in the argument is the calculation of \w[.]{\KK{r}:= \Ker(\Eis{r}{\us})}
For \w[,]{j\leq k} observe that \w[.]{\KK{r}=\gam{2k-1}\cdot\Eis{r}{\cF{2}}}
For \w[,]{j=k+1} multiples of \w{\iot{k}} are zero in \w[,]{\cF{1}} so that
\w[.]{\KK{k+1}=\gam{2k-1}\cdot\Eis{k+1}{\cF{2}} + \iot{k}\cdot\Eis{k+1}{\cF{2}}}
Using our comparison map \w{\us:\Eis{r}{\cF{2}}\to\Eis{r}{\cF{1}}} and the fact
that \w{\Sq{j}(\iot{k-1})} transgresses to \w{\Sq{j}(\iot{k})} in \w[,]{\cF{1}}
we deduce that either the class \w{\Sq{j}(\iot{k-1})} transgresses to
\w{\Sq{j}(\iot{k})} in \w[,]{\cF{2}} or else some differential carries \w{\Sq{j}(\iot{k-1})}
to a class in  \w{\KK{r}} for some $r$. Now note that for \w[,]{2\leq j \leq k-1}
\w[,]{k+1\leq  |\Sq{j}(\iot{k-1})| \leq 2k-2} and that in these degrees \w{\KK{r}} is zero.
This forces \w{\Sq{j}(\iot{k-1})} to be transgressive.

The class \w{\gam{2k-1}} lies in \w{\KK{r}} for all $r$. As the spectral sequence converges
to the cohomology of \w[,]{\bS{k}} it must support a differential. Therefore,
a differential on it must also lie in \w[,]{\KK{r}} which leaves the only possibility
as \w[,]{d\sb{2k}(\gam{2k-1})=\iot{k}\sp{2}} for degree reasons.

%
%

\begin{figure}[htbp]
\begin{center}
  \pgfsetshortenend{3pt}
  \pgfsetshortenstart{3pt}
  \begin{tikzpicture}[scale=1.1,line width=1pt]
    \draw[help lines] (-6.3,-2.3) grid (7.3,10.3);
  {\draw[fill]
    (-5.5,-2.3) circle (0pt) node[below=-1pt] {$0$}
    (-4.5,-2.3) circle (0pt) node[below=-1pt] {$1$}
    (-3.5,-2.3) circle (0pt) node[below=-1pt] {$\cdot$}
    (-2.5,-2.3) circle (0pt) node[below=-1pt] {$\cdot$}
    (-1.5,-2.3) circle (0pt) node[below=-1pt] {$\cdot$}
    (-.5,-2.3) circle (0pt) node[below=-1pt] {$\cdot$}
    (.5,-2.3) circle (0pt) node[below=-1pt] {$k$}
    (1.5,-2.3) circle (0pt) node[below=-1pt] {$k+1$}
    (2.5,-2.3) circle (0pt) node[below=-1pt] {$k+2$}
    (3.5,-2.3) circle (0pt) node[below=-1pt] {$\cdot$}
   (4.5,-2.3) circle (0pt) node[below=-1pt] {$\cdot$}
   (5.5,-2.3) circle (0pt) node[below=-1pt] {$\cdot$}
   (6.5,-2.3) circle (0pt) node[below=-1pt] {$2k$}
    (.3,-2.8) circle (0pt) node[below=-1pt] {  $\HuTs{\KZt{k}}  $}
    ;}

 {\draw[fill]
    (-6.6,-1.4) circle (0pt) node[below=-1pt] {$0$}
    (-6.6,-.4) circle (0pt) node[below=-1pt] {$1$}
    (-6.6,.6) circle (0pt) node[below=-1pt] {$\cdot$}
    (-6.6,1.6) circle (0pt) node[below=-1pt] {$\cdot$}
    (-6.6,2.6) circle (0pt) node[below=-1pt] {$\cdot$}
    (-6.6,3.6) circle (0pt) node[below=-1pt] {$k-1$}
    (-6.6,4.6) circle (0pt) node[below=-1pt] {$k$}
    (-6.6,5.6) circle (0pt) node[below=-1pt] {$k+1$}
    (-6.6,6.6) circle (0pt) node[below=-1pt] {$\cdot$}
    (-6.6,7.6) circle (0pt) node[below=-1pt] {$\cdot$}
   (-6.6,8.6) circle (0pt) node[below=-1pt] {$2k-2$}
   (-6.6,9.6) circle (0pt) node[below=-1pt] {$2k-1$}
    (-7.6,2.8) circle (0pt) node[below=-1pt,rotate=90] {  $\Hus{\bX}{\mathbb{F}_2}$}
    ;}

 {\draw[fill]
    (-5.5,-1.4) circle (0pt) node[below=-1pt] {$1$}
    (.5,-1.4) circle (0pt) node[below=-1pt] {$\iot{k}$}
    (2.5,-1.4) circle (0pt) node[below=-1pt] {\scriptsize $\Sq{2}\iot{k}$}
    (5.5,-1.4) circle (0pt) node[below=-1pt] {\scriptsize $\Sq{k-1}\iot{k} $}
    (6.5,-1.4) circle (0pt) node[below=-1pt] {\scriptsize $\iot{k}^2$}
    (-5.5,5.6) circle (0pt) node[below=-1pt] {\scriptsize $\Sq{2}\iot{k-1}$}
    (-5.5,8.6) circle (0pt) node[below=-1pt] {\scriptsize $\iot{k-1}^2$}
   (-5.5,9.6) circle (0pt) node[below=-1pt] {\scriptsize $\gamma_{2k-1}$}
    ;}

  {\draw[->]
(-5.1,5.2) -- (2.4,-1.3) node[midway,left]{\scriptsize $d_{k+2}$}
;}

{\draw[->]
(-5.3,9.4) -- (6.4,-1.3) node[midway,left]{\scriptsize $d_{2k}$}
;}

{\draw[->] (-5.3,8.4) to node[midway,left]{\scriptsize $d_{2k-1}$}
(5.4,-1.3) ;}
\end{tikzpicture}
\end{center}

  \caption{Spectral sequence for the fibration sequence $\cF{2}$}
 \label{SpseqX}
\end{figure}

\vsm\quad

 \noindent\textbf{Case II: odd primes\vsm.}

For an odd prime $p$, we have:
\begin{myeq}\label{eqemcohp}
  \HuFs{\KZp{k}} \cong F\sp{\gr}\sb{\Fp}[\cP{I}(\iot{k})~|\
    \mbox{ admissible}, \eps{s+1}(I)=0, \exc(I)<k-1]
\end{myeq}
  \noindent (see \cite[Proposition 3.5.8]{Koc}), where now
  \w{I:=(\eps{0},i\sb{0},\cdots, \eps{s}, i\sb{s}, \eps{s+1})} for \w[,]{\eps{i}\in\{0,1\}}
  and  \w[.]{\cP{I} = \beta\sp{\eps{0}}\cP{i_0}\cdots\cP{i\sb{s}}\beta\sp{\eps{s+1}}}
  Here \w{F\sp{\gr}\sb{\Fp}} denotes the free graded commutative algebra \wh
  that is, exterior on the odd degree classes and polynomial on the even
  degree classes\vsn.

  We now distinguish between two cases, depending on the parity of  $k$:
when $k$ is odd, we find
\begin{myeq}\label{eqemettodd}
  \Euz{p}=F\sp{\gr}\sb{\Fp}[\iot{k-1}\sp{p},\,
    \cP{I}(\iot{k-1})~|\ I\neq 0 \mbox{ admissible}, \eps{s+1}(I)=0, \exc(I)<k]
\end{myeq}
\noindent Note that \w{d\sb{k}(\iot{k-1}\sp{p})=0} in the exact sequence \wref[,]{spseqX} while
\w{d\sb{k}(\iot{k-1}\sp{j})=j\iot{k-1}\sp{j-1}} for \w[.]{j\leq p-1}  As in the \w{p=2} case,
we write down the corresponding generators in \w{\HuFs{\bX}} using the same notation.
It follows that \w{\HuFs{\bX}} is generated as an \ww{\Euz{p}}-module by $1$ and
\w[,]{\gam{(k-1)p+1}} which is the image of
\w{\iot{k-1}\sp{p-1}} in \w{\HuF{(p-1)(k-1)+k}{\bX}} in the exact sequence \wref[.]{spseqX}

Therefore, the indecomposable classes in \w{\HuFs{\bX}} are \w[,]{\gam{(k-1)p+1}} and
\w{\cP{j}(\iot{k-1})} for \w{j\leq \frac{k-1}{2}} (noting that
\w[).]{\iot{k-1}\sp{p} =\cP{\frac{k-1}{2}}(\iot{k-1})}
We verify that all these classes are transgressive in the \ww{\Fp}-cohomology Serre
spectral sequence for the fibration sequence \w[.]{\bX\to\bS{k} \to\KZp{k}}

This follows from a very similar argument to that for the case \w[.]{p=2}
Note, however, that for reasons of degree, the classes \w{\cP{i}(\iot{k-1})} must be
transgressive, while the class \w{\gam{(k-1)p+1}} must transgress to the class
\w{\beta\cP{\frac{k-1}{2}}(\iot{k})} (which is the differential on
\w{\iot{k-1}\sp{p-1}\cdot\iot{k}} in the spectral sequence \w[]{\cF{1}}
by the Kudo transgression theorem (see \cite[Theorem 3.5.3]{Koc})\vsn .

For $k$ even, we have
\begin{myeq}\label{eqemetteven}
  \Euz{p} =  F\sp{\gr}_{\Fp}[\cP{I}(\iot{k-1})~|\ I\neq 0 \mbox{ admissible},
    \eps{s+1}(I)=0, \exc(I)<k-1]~,
\end{myeq}
\noindent as \w[.]{\iot{k-1}\sp{2}=0} It follows that \w{\HuFs{\bX}} is generated as
an \ww{\Euz{p}}-module by $1$ and \w[,]{\gam{2k-1}} which is the image of
\w{\iot{k-1}} in \w{\HuF{2k-1}{\bX}} in the exact sequence \wref[.]{spseqX}
  Therefore, the indecomposable classes in \w{\HuFs{\bX}} are \w[,]{\gam{2k-1}}  and
  \w{\cP{j}(\iot{k-1})} for \w[.]{j< \frac{k-1}{2}}
  As above by calculating \w{\KK{r}} we see that the only class that \w{d\sb{r}(\gam{2k-1})}
  can be is \w[,]{\iot{k}\sp{2}} so it is transgressive.

  Finally for the classes \w[,]{\cP{j}(\iot{k-1})} observe that if these classes are
  \emph{not} transgressive, the differential on these classes must lie in \w{\KK{r}}
  for some $r$ (as they are transgressive in the spectral sequence for \w[).]{\cF{1}}
Observe that \w{\KK{r} =\{\gam{2k-1}\Eis{r}{\cF{2}}} for \w[,]{r\leq k} while
\w[.]{\KK{k+1} = \gam{2k-1}\cdot\Eis{k+1}{\cF{2}}+ \iot{k}\cdot\Eis{k+1}{\cF{2}}}

If \w{d\sb{r}(\cP{j}(\iot{k-1})\neq 0} for some \w[,]{r< 2k} we must have \w{r > k} and
\w[.]{d\sb{r}(\cP{j}(\iot{k-1}))=\gam{2k-1}q} Note that $q$ must be in the $r$-th column,
and hence is of the form \w{\cP{L}(\iot{k})q'} with \w[,]{\exc(L)<k-1} where \w{q'} is
in the $0$-th column, since \w[.]{r<2k} Thus
\w[.]{d\sb{r}(\cP{L}(\iot{k-1})q'\gam{2k-1})=\gam{2k-1}q}

Observe that the classes \w{\cP{j}(\iot{k-1})} in the cohomology of $X$ are defined only
up to a multiple of \w[,]{\gam{2k-1}} so we may change the representative in such a way
that the differential \w{d\sb{r}} vanishes on it. Finally, note that \w{\KK{2k+1}} is
simply \w[,]{\Fp\{\iot{k}\}} so from this page on the differentials are determined by
those of  the spectral sequence for \w[.]{\cF{1}} The result follows.
\end{proof}

\begin{corollary}\label{catomic}
All homotopy classes of maps between wedges of simply-connected spheres are
generated of higher order by the fundamental classes, the Hopf maps
\w[,]{\eta\sb{k}\in\pi\sb{k+1}\bS{k}}
\w[,]{\nu\sb{k}\in\pi\sb{k+3}\bS{k}} and
\w[,]{\sigma\sb{k}\in\pi\sb{k+7}\bS{k}} and the maps
\w[,]{\alpha\sb{1}(p)\in\pi\sb{k+2p-3}\bS{k}} for odd primes $p$ and \w[.]{k\geq 2}
\end{corollary}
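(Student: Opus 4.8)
The plan is to strip the statement down to a single homotopy group $\pi_{n}\bS{k}$ and then feed it through Theorems \ref{tjct} and \ref{ttransg}. First I would reduce to $\pi_{n}\bS{k}$: a map $\bigvee_{j}\bS{n_{j}}\to\bigvee_{i}\bS{m_{i}}$ is precisely the tuple of its restrictions to the summands $\bS{n_{j}}$, so by the cogroup structure of a sphere it suffices to generate each class in $\pi_{n}(\bigvee_{i}\bS{m_{i}})$, and by Hilton's theorem \cite{HilH} this group splits as $\bigoplus_{w}\pi_{n}\bS{|w|}$, the sum over basic products $w$ in the inclusions $\bS{m_{i}}\hookrightarrow\bigvee_{i}\bS{m_{i}}$, with $\xi\in\pi_{n}\bS{|w|}$ corresponding to the composite $w\circ\xi$. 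Since each $w$ is an iterated Whitehead product of fundamental classes, we are reduced — using Whitehead products and composition — to generating $\pi_{n}\bS{k}$ for all $k\geq 2$ and $n\geq k$. When $n=k$ this is $\ZZ\cdot\iot{k}$; when $k$ is even and $n=2k-1$ the free summand is $\ZZ\cdot[\iot{k},\iot{k}]$; dividing these out leaves a finite torsion group, which splits into its $p$-primary parts, each unaffected by $p$-localization.

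Next I would pass to a connected cover. Fixing $p$ and working $p$-locally, for $n>k$ the class $\varphi\in\pi_{n}\bS{k}$ lifts uniquely to $f\in\pi_{n}\bX$, where $\bX$ is the homotopy fiber of $\vare_{k}:\bS{k}\to\KZp{k}$ of Theorem \ref{ttransg}; by Serre's theorem $\bX$ is $(k+2p-4)$-connected, and the covering map $g:\bX\to\bS{k}$ induces an isomorphism on $\pi_{n}$ for every $n>k$. Applying Theorem \ref{tjct} to $\bX$ presents $f$ as a combination, under primary and higher order simplicial operations, of atomic maps of $\bX$; the structure maps of the sequential approximation of $\bX$ that feed those higher operations are themselves controlled by an induction on $n$. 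As $g_{\#}$ commutes with all of these operations, pushing the presentation forward exhibits $\varphi$ as the same combination of the images under $g_{\#}$ of the atomic maps of $\bX$; reassembling over all primes and restoring the torsion-free classes from the first step finishes the proof, once the atomic maps of $\bX$ are identified.

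That identification is the crux, and is where Theorem \ref{ttransg} enters. If $f':\bS{n'}\to\bX$ is atomic for $p$ and $\varphi':=g\circ f'$, then the two-cell complex $\Cof{\varphi'}=\bS{k}\cup_{\varphi'}e^{n'+1}$ supports a non-trivial mod $p$ cohomology operation from its bottom cell to its top cell. On a two-cell complex any operation with a non-trivial positive-degree right factor acts as zero, so this operation is indecomposable, hence of the form $\Sq{2^{i}}$ (for $p=2$), or $\cP{p^{i}}$ or a Bockstein (for $p$ odd); since its degree $n'+1-k$ is at least $2$, the one-dimensional operations $\Sq{1}$ and $\beta$ are ruled out, so it is $\Sq{2^{i}}$ with $i\geq 1$ when $p=2$, or $\cP{p^{i}}$ with $i\geq 0$ when $p$ is odd. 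The Hopf invariant one theorems — Adams \cite{AdHI} for $p=2$, its odd-primary analogue of Liulevicius and of Shimada--Yamanoshita for $p$ odd — then force $2^{i}\in\{2,4,8\}$, respectively leave only $\cP{1}$, so that $\varphi'$ is, up to a $p$-local unit, one of $\eta_{k},\nu_{k},\sigma_{k}$ when $p=2$ and $\alpha_{1}(p)$ when $p$ is odd. Conversely, the transgression calculations already carried out in the proof of Theorem \ref{ttransg} show that each of these classes has non-zero mod $p$ Hurewicz image in the relevant cover, so the displayed list of generators is exhaustive and sharp.

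The hard part will be exactly this cohomological step: one must be certain that a two-cell complex cannot carry any genuinely secondary or otherwise decomposable operation linking its two cells, so that Theorem \ref{ttransg} really delivers a primary operation, and one must then invoke the Hopf invariant one theorems at every prime. Everything else — the Hilton splitting, the passage to the $p$-local connected covers, and the compatibility of $g_{\#}$ with the primary and higher order operations — is routine bookkeeping.
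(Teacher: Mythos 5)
Your proposal is correct and follows essentially the same route as the paper, whose own proof of this corollary is just the one-line citation of Adams and Liulevicius: you have simply filled in the intended chain of reasoning (Hilton splitting and $p$-localization, reduction to the connected cover, Theorem \ref{tjct} to reduce to atomic classes, Theorem \ref{ttransg} to produce a necessarily primary and indecomposable mod $p$ operation on the two-cell cofiber, and the Hopf invariant one theorems to pin down the list). The observation that decomposable operations vanish on a two-cell complex, which you single out as the crux, is exactly the standard step the paper is implicitly delegating to the ``stable analysis'' in \cite{AdHI} and \cite{LiuFC}.
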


\begin{proof}
This follows from the stable analysis in \cite{AdHI} and \cite{LiuFC}.
Note that we may post-compose the augmentation obtained from a resolution
\w{\Wd} of \w{\|\Wd\|\simeq\bS{k}\lra{k}} with the covering map
\w{p:\bS{k}\lra{k}\to\bS{k}} to obtain the augmentation \w{\Wd\to\bX=\bS{k}}
in Definition \ref{dshoho}: this shows
that if \w{\alpha\in\pis\bS{k}\lra{k}} is decomposable (that is, a value for a
higher order operation associated to \w[),]{\Wd} then \w{p\sb{\#}\alpha} is, too.
\end{proof}

In other words, the ``module'' decomposability in \w{\pis\bS{k}\lra{k}}
in the sense of Definition \ref{dgho} implies the ``algebra'' decomposability
in \w{\pis\bS{k}} itself in the sense of Definition \ref{dgoho}.

%
%
\sect{Suspensions}
\label{csusp}

So far we have only considered the way elements in the homotopy groups of a single
space $\bY$ can be decomposed. However, our methods also allow us to describe
the elements generated by suspending $\bY$.

\begin{mysubsection}{The suspension spectral sequence}
\label{ssuspss}
Given a connected pointed space $\bY$, we denote by \w{\bY\otimes S\sp{n}}
the $n$-skeletal simplicial space with a single non-degenerate copy of $\bY$
in simplicial dimension $n$, so that its geometric realization is weakly equivalent
to \w[.]{\Sigma\sp{n}\bY}

In particular, the homotopy spectral sequence for \w{\Wd:=\bY\otimes S\sp{1}} will
be called the \emph{suspension spectral sequence} for $\bY$. This simplicial space
may be described combinatorially in terms of the degeneracies of $\bY$:
\mytdiag[\label{eqsupssp}]{
  \dotsc s\sb{1}s\sb{0}\bY\vee s\sb{2}s\sb{0}\bY\vee s\sb{2}s\sb{1}\bY
  \ar@<3ex>[rr]\sp(0.6){d\sb{0}} \ar@<1ex>[rr]\sp(0.6){d\sb{1}}
  \ar@<-1ex>[rr]\sp(0.6){d\sb{2}}
  \ar@<-2ex>[rr]\sb(0.6){d\sb{3}} &&
  s\sb{0}\bY\vee s\sb{1}\bY
    \ar@<2ex>[rr]\sp(0.5){d\sb{0}} \ar[rr]\sp(0.5){d\sb{1}}
  \ar@<-1ex>[rr]\sb(0.5){d\sb{2}} && \bY
 \ar@<0.5ex>[r]\sp(0.6){d\sb{0}}\ar@<-0.5ex>[r]\sb(0.6){d\sb{1}} &
 \ast\\
 \dotsc \bW\sb{3} && \bW\sb{2} && \bW\sb{1} & \bW\sb{0}
}
\noindent with the degeneracies as indicated, and all face maps determined by the
simplicial identities.

Since \w[,]{\pis\bW\sb{0}=0} \w{\pis\Sigma\bY} has no elements in filtration $0$ in
this spectral sequence, so all its elements are formally exhibited as values
of higher simplicial homotopy operations.
\end{mysubsection}

\begin{remark}\label{rsusp}
The elements of \w{\pis\Sigma\bY} in filtration $1$ are evidently
just the image of the suspension homomorphism
\w[.]{E:\pis\bY\to\pi\sb{\ast+1}\Sigma\bY} Note that it is natural to think of
suspensions as values of a secondary homotopy operation, since any map
\w{f:\bS{n}\to\bY} is null homotopic in \w{\Sigma\bY} for two different reasons \wh
namely, the two cones on the ``equatorial'' copy of $\bY$.
\end{remark}

\begin{example}\label{egnup}
In the spectral sequence for \w[,]{\Wd:=\bS{2}\otimes S\sp{1}}
the $1$-cycles \w[,]{\iot{2}\in\pi\sb{2}\bW\sb{1}}
\w[,]{\eta\sb{2}\in\pi\sb{3}\bW\sb{1}}  and
\w{\eta\sb{2}\eta\sb{3}\in\pi\sb{4}\bW\sb{1}} represent the suspension classes
\w[,]{\iot{3}\in\pi\sb{3}\bS{3}}
\w[,]{\eta\sb{3}\in\pi\sb{4}\bS{3}}  and
\w[,]{\eta\sb{3}\eta\sb{4}\in\pi\sb{5}\bS{3}} respectively.

The $2$-chain \w{[s\sb{0}\iot{2},s\sb{1}\iot{2}]\in\pi\sb{3}\bW\sb{2}}
is not a cycle, since \w{[\iot{2},\iot{2}]\neq 0} in \w[.]{\pi\sb{3}\bW\sb{1}}
However, \w{[s\sb{0}\iot{2},s\sb{1}\iot{2}]\circ\eta\sb{3}\in\pi\sb{4}\bW\sb{2}}
is a cycle, since
\w{[\iot{2},\iot{2}]\circ\eta\sb{3}=(2\eta\sb{2})\circ\eta\sb{3}=0} in
\w[.]{\pi\sb{4}\bW\sb{1}} It has order $2$, and is not a boundary, so it must represent
\w[,]{\pm\nu'\in\pi\sb{6}\bS{3}} with \w{2\nu'=\eta\sb{3}\eta\sb{4}\eta\sb{5}}
(a suspension, and thus in filtration $1$).
\end{example}

\begin{remark}\label{rgsusp}
More generally, we could start with any simplicial resolution \w{\Wd} of $\bY$, and by
letting \w{\Vd:=\Wd\otimes S\sp{k}} we obtain a resolution of \w[.]{\Sigma\sp{k}\bY}
\end{remark}

\begin{mysubsection}{Whitehead products}
\label{swhite}
To identify certain terms in higher filtration in the suspension spectral sequence,
we consider the special case where \w[.]{\bY=\bS{p}\vee\bS{q}} Note that we have
two split inclusions \w{i\sb{p}:\bS{p}\hra\bY} and \w[,]{i\sb{q}:\bS{q}\hra\bY}
so the spectral sequences for each summand split off from that for $\bY$; we are only
interested in the remaining cross-term part.

First assume that \w[:]{p,q\geq 2} by Hilton's Theorem, we know
that the lowest dimensional non-trivial cross-term cycle in the simplicial abelian
group \w{\pi\sb{i}\Wd} in filtration (=simplicial dimension) $2$ necessarily
has the form:
\begin{myeq}\label{eqwhct}
w\sb{2}~:=~[s\sb{0}i\sb{p},\,s\sb{1}i\sb{q}]~-~[s\sb{1}i\sb{p},\,s\sb{0}i\sb{q}]~\in~
\pi\sb{p+q-1}\bW\sb{2}
\end{myeq}
\noindent (see \wref[).]{eqsupssp} For dimensional reasons this cycle cannot be hit
by any differentials, and the class it represents in
\w{\pi\sb{p+q+1}\|\Wd\|=\pi\sb{p+q+1}(\bS{p+1}\vee\bS{q+1})} is therefore the
generator \w{[i\sb{p+1},\,i\sb{q+1}]} (up to sign) \wh which thus can be expressed as
the value of a third order homotopy operation.

Likewise, the lowest dimensional cycles in filtration $3$ will have the form
\begin{myeq}\label{eqwhcth}
\begin{split}
&~[s\sb{1}s\sb{0}i\sb{p},[s\sb{2}s\sb{0}i\sb{p},\,s\sb{2}s\sb{1}i\sb{q}]]
  -[s\sb{1}s\sb{0}i\sb{p},[s\sb{2}s\sb{1}i\sb{p},\,s\sb{2}s\sb{0}i\sb{q}]]
  +[s\sb{2}s\sb{0}i\sb{p},[s\sb{2}s\sb{1}i\sb{p},\,s\sb{1}s\sb{0}i\sb{q}]]\\
  &-[s\sb{2}s\sb{0}i\sb{p},[s\sb{1}s\sb{0}i\sb{p},\,s\sb{2}s\sb{1}i\sb{q}]]
  +[s\sb{2}s\sb{1}i\sb{p},[s\sb{1}s\sb{0}i\sb{p},\,s\sb{2}s\sb{0}i\sb{q}]]
  -[s\sb{2}s\sb{1}i\sb{p},[s\sb{2}s\sb{0}i\sb{p},\,s\sb{1}s\sb{0}i\sb{q}]]
\end{split}
\end{myeq}
\noindent representing \w{[i\sb{p+1},[i\sb{p+1},i\sb{q+1}]]} in
\w{\pi\sb{2p+q+1}\in\bS{p+1}\vee\bS{q+1}} (and similarly for the other Hall basis
elements).  Note that the $2$-cycle
\w[,]{[s\sb{0}i\sb{p},[s\sb{0}i\sb{p},\,s\sb{1}i\sb{q}]]
  -[s\sb{1}i\sb{p},[s\sb{0}i\sb{p},\,s\sb{1}i\sb{q}]]} in the cross-term of
\w[,]{\pi\sb{2p+q-2}\bW\sb{2}} bounds
\w[,]{[s\sb{2}s\sb{0}i\sb{p},[s\sb{1}s\sb{0}i\sb{p},\,s\sb{2}s\sb{1}i\sb{q}]]}
and so on.

When \w[,]{p=q=1} \w{\pis\Wd} is concentrated in dimension $1$, and the
Whitehead products are commutators, so we cannot use a dimension argument to
identify the cycles.  In this case it is convenient to work with a simplicial group
model for \w[,]{\bS{1}\vee\bS{1}} namely, \w{F(S\sp{0}\vee S\sp{0})} \wwh a constant
(free) simplicial group. Thus \w{\Wd} is a bisimplicial group which is equivalent to its
own diagonal, which is thus a simplicial group model \w{F(S\sp{1}\vee S\sp{1})}for
\w[,]{S\sp{2}\vee S\sp{2}} and the spectral sequence collapses at the
\ww{E\sp{2}}-term.

In this case we see that the $1$-dimensional analogue of \wref{eqwhct} is the
product of two commutators:
\begin{myeq}\label{eqwhco}
  \omega~:=~
  \left(s\sb{0}\alpha s\sb{1}\beta s\sb{0}\alpha\sp{-1} s\sb{1}\beta\sp{-1}\right)
\cdot\left(s\sb{1}\alpha s\sb{0}\beta s\sb{1}\alpha\sp{-1} s\sb{0}\beta\sp{-1}\right)~,
\end{myeq}
\noindent where \w{\alpha\in\pi\sb{1}\bS{1}\lo{1}} and
\w{\beta\in\pi\sb{1}\bS{1}\lo{2}} are the fundamental classes of the two wedge
summands in \w[.]{\bY=\bS{1}\lo{1}\vee\bS{1}\lo{2}} This is known to represent the
Whitehead product in \w{S\sp{2}\vee S\sp{2}} (see \cite[\S 11.11]{CurtS}).
A similar statement holds for the case \w[.]{p=1<q}

Since \w{\bS{k}\vee\bS{k}} is the universal space for Whitehead products of
$k$-dimensional classes, we see that whenever $\bY$ is \wwb{k-1}connected
and \w[,]{\alpha,\beta\in\pi\sb{k}\bY} the Whitehead product \w{[E\alpha,E\beta]}
in \w{\pi\sb{2k+1}\Sigma\bY} is represented in the spectral sequence by
\begin{myeq}\label{eqwhprodrep}
\gamma~:=~[s\sb{0}\alpha,\,s\sb{1}\beta]-[s\sb{1}\alpha,\,s\sb{0}\beta]~,
\end{myeq}
\noindent modulo suspensions.
\end{mysubsection}

We can use the suspension spectral sequence to provide an elementary proof of the
following well-known facts (cf.\ \cite[(3.5)]{HWhiW}):

\begin{lemma}\label{lsusp}
Let $\bY$ be \wwb{k-1}connected \wb[,]{k\geq 1} and \w[:]{\alpha,\beta\in\pi\sb{k}\bY}

\begin{enumerate}
\renewcommand{\labelenumi}{(\alph{enumi})~}
\item If \w{[\alpha,\beta]=0} in \w[,]{\pi\sb{2k-1}\bY}
then \w{[E\alpha,E\beta]} is divisible by $2$ in
\w[,]{\pi\sb{2k+1}\Sigma\bY} modulo elements in the image of the suspension
\w[.]{E:\pis\bY\to\pi\sb{\ast+1}\Sigma\bY}
\item If \w{\alpha=\beta} and $k$ is even, then \w{[E\alpha,E\beta]} is a suspension.
\end{enumerate}
\end{lemma}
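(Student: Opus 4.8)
The plan is to read off $[E\alpha,E\beta]$ directly from the suspension spectral sequence of $\bY$, i.e. the homotopy spectral sequence of $\Wd=\bY\otimes S\sp{1}$ (see \S\ref{ssuspss}), using the explicit representing cycle of (\ref{eqwhprodrep}). Recall that $\bW\sb{1}=\bY$, that $\bW\sb{2}=s\sb{0}\bY\vee s\sb{1}\bY$, and that the three faces $d\sb{0},d\sb{1},d\sb{2}\colon\bW\sb{2}\to\bW\sb{1}=\bY$ are, by the simplicial identities, the identity on $s\sb{0}\bY$ and trivial on $s\sb{1}\bY$ (for $d\sb{0}$), the fold map (for $d\sb{1}$), and trivial on $s\sb{0}\bY$ and the identity on $s\sb{1}\bY$ (for $d\sb{2}$). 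Hence for $\alpha,\beta\in\pi\sb{k}\bY$ the element
\[
\gamma\ :=\ [s\sb{0}\alpha,\,s\sb{1}\beta]\ -\ [s\sb{1}\alpha,\,s\sb{0}\beta]\ \in\ \pi\sb{2k-1}\bW\sb{2}
\]
always has $d\sb{0}\gamma=d\sb{2}\gamma=0$ and $d\sb{1}\gamma=[\alpha,\beta]-[\alpha,\beta]=0$, so it is a $2$-cycle, and by (\ref{eqwhprodrep}) it represents $[E\alpha,E\beta]$ modulo filtration $\le 1$; since $\bW\sb{0}=\ast$ makes filtration $0$ trivial, filtration $\le 1$ is precisely the image of the suspension $E\colon\pis\bY\to\pi\sb{\ast+1}\Sigma\bY$ by Remark \ref{rsusp}.

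For (b), suppose $\alpha=\beta$ and $k$ is even. Since $|s\sb{0}\alpha|=|s\sb{1}\alpha|=k$, graded commutativity of the Whitehead product gives $[s\sb{1}\alpha,s\sb{0}\alpha]=(-1)\sp{k\sp{2}}[s\sb{0}\alpha,s\sb{1}\alpha]=[s\sb{0}\alpha,s\sb{1}\alpha]$, so $\gamma=0$. Thus $[E\alpha,E\alpha]$ is represented by the zero $2$-cycle, hence lies in filtration $\le 1$, i.e. it is a suspension. (For $k$ odd the identical computation instead yields $\gamma=(1-(-1)\sp{k})[s\sb{0}\alpha,s\sb{1}\alpha]=2\,[s\sb{0}\alpha,s\sb{1}\alpha]$, which already settles the divisibility assertion of (a) in the case $\alpha=\beta$.)

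For (a), assume $[\alpha,\beta]=0$ in $\pi\sb{2k-1}\bY$. Then $d\sb{1}[s\sb{0}\alpha,s\sb{1}\beta]=[\alpha,\beta]=0$, and since $d\sb{0}$ and $d\sb{2}$ already annihilate that summand, $[s\sb{0}\alpha,s\sb{1}\beta]$ is by itself a $2$-cycle; similarly $[s\sb{1}\alpha,s\sb{0}\beta]$. Writing $\gamma=2\,[s\sb{0}\alpha,s\sb{1}\beta]-\sigma$ with $\sigma:=[s\sb{0}\alpha,s\sb{1}\beta]+[s\sb{1}\alpha,s\sb{0}\beta]$, it is enough to show that $\sigma$ represents, modulo suspensions, a class already in the image of $E$ — equivalently, that $\sigma$ bounds in the normalized chain complex $\pis C\sb{\bullet}\Wd$ of the spiral spectral sequence (\ref{eqfibseq}), whose differential on $C\sb{\bullet}\Wd$ is $d\sb{0}$. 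One constructs the bounding $3$-chain $\chi\in\pi\sb{2k-1}\bW\sb{3}$, where $\bW\sb{3}=s\sb{1}s\sb{0}\bY\vee s\sb{2}s\sb{0}\bY\vee s\sb{2}s\sb{1}\bY$, as an explicit signed combination of Whitehead products $[s\sb{j}s\sb{i}\alpha,\,s\sb{\ell}s\sb{m}\beta]$ of double degeneracies, arranged so that applying $d\sb{1},d\sb{2},d\sb{3}$ produces only cancelling pairs of genuine cross–terms together with diagonal contributions of the form $[s\sb{i}(-),s\sb{i}(-)]=s\sb{i}[\alpha,\beta]$, which vanish precisely because $[\alpha,\beta]=0$ — so that $\chi\in C\sb{3}\Wd$ while $d\sb{0}\chi=\sigma$. (Alternatively, in the situations that actually occur in this paper, $[\alpha,\beta]=0$ forces $\alpha=m\delta$, $\beta=n\delta$ for a common $\delta\in\pi\sb{k}\bY$, and one reduces by bilinearity of the Whitehead product to $[E\delta,E\delta]$, treated in (b); here the hypothesis $[\alpha,\beta]=0$ forces the coefficient $mn$ to be even exactly in the cases where $[E\delta,E\delta]$ is not itself a suspension.)

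The routine verifications — that the $\gamma$ above is a permanent cycle, that filtration $1$ equals the suspension image, and that no higher differential interferes in this bidegree (for connectivity reasons the groups $E\sp{r}\sb{\ast,2k-2}$ carry no cross–term classes, since a length‑$2$ Whitehead product in degree $2k-2$ would need two factors of degree $\ge k$ summing to $2k-1$, and $\bY\wedge\bY$ is $(2k-1)$‑connected) — all follow from \S\ref{ssss}–\S\ref{swhite}. The one delicate point, and the main obstacle, is the assembly of the $3$‑chain $\chi$ for (a): one must keep careful track of the simplicial identities $d\sb{i}s\sb{j}$ on the three wedge summands of $\bW\sb{3}$ and of the graded‑commutativity signs of the Whitehead products, and check that the resulting telescope of face relations closes up — which happens exactly because the diagonal terms $s\sb{i}[\alpha,\beta]$ are killed by the hypothesis.
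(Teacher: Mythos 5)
Your treatment of part (b), and of part (a) in the case \w[,]{\alpha=\beta} is correct and coincides with the paper's own argument: graded commutativity gives
\w[,]{[s\sb{1}\alpha,s\sb{0}\alpha]=(-1)\sp{k}[s\sb{0}\alpha,s\sb{1}\alpha]} so $\gamma$ vanishes for $k$ even and equals \w{2[s\sb{0}\alpha,s\sb{1}\alpha]} for $k$ odd; and this \w{\alpha=\beta} case is the only one invoked later (Theorem \ref{twhitpgen}, Examples \ref{egsusp}).

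The gap is in part (a) for \w[:]{\alpha\neq\beta} the bounding $3$-chain $\chi$ with \w{d\sb{0}\chi=\sigma:=[s\sb{0}\alpha,s\sb{1}\beta]+[s\sb{1}\alpha,s\sb{0}\beta]} whose construction you defer does not exist. Computing the faces of \w{\bW\sb{3}=s\sb{1}s\sb{0}\bY\vee s\sb{2}s\sb{0}\bY\vee s\sb{2}s\sb{1}\bY} from the simplicial identities, one finds \w{d\sb{0}=d\sb{1}} on the first two summands and \w{d\sb{0}=0} on the third; consequently, if $\chi$ is a sum of cross-term Whitehead products sorted by their pair of degeneracy types, then \w{d\sb{0}\chi=d\sb{1}\chi\sb{a}} where \w{\chi\sb{a}} is the part of type \w[,]{\{s\sb{1}s\sb{0},s\sb{2}s\sb{0}\}} while \w{d\sb{1}\chi=d\sb{1}\chi\sb{a}+d\sb{1}\chi\sb{b}} and \w{d\sb{2}\chi=d\sb{1}\chi\sb{b}+d\sb{3}\chi\sb{c}} and \w{d\sb{3}\chi=d\sb{3}\chi\sb{c}} modulo degenerate (hence non-cross-term) elements, with \w{\chi\sb{b},\chi\sb{c}} the parts of types \w{\{s\sb{1}s\sb{0},s\sb{2}s\sb{1}\}} and \w[.]{\{s\sb{2}s\sb{0},s\sb{2}s\sb{1}\}} Chasing these three relations shows that \w{d\sb{1}\chi=d\sb{2}\chi=d\sb{3}\chi=0} forces the cross-term component of \w{d\sb{0}\chi} to vanish, so $\sigma$ is never a boundary in the normalized complex and the classes of \w{[s\sb{0}\alpha,s\sb{1}\beta]} and \w{[s\sb{1}\alpha,s\sb{0}\beta]} remain independent in \w[.]{E\sp{2}\sb{2,2k-1}} This is not merely a failure of the method: for \w{\bY=\bS{2}\times\bS{2}} with $\alpha,\beta$ the two factor inclusions one has \w[,]{[\alpha,\beta]=0} yet \w{[E\alpha,E\beta]} is the Whitehead product of the two $3$-sphere inclusions in \w[,]{\Sigma\bY\simeq\bS{3}\vee\bS{3}\vee\bS{5}} which generates a free summand of \w{\pi\sb{5}\Sigma\bY} complementary to the ($2$-torsion) image of $E$; so the assertion itself requires either \w{\alpha=\beta} or some further hypothesis on $\bY$. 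Your parenthetical fallback (that \w{[\alpha,\beta]=0} forces $\alpha$ and $\beta$ to be proportional) fails for the same example. For comparison, the paper's own proof of (a) consists of the bare assertion that $\gamma$ is the double of the cycle \w{[s\sb{0}\alpha,s\sb{1}\beta]} \wh exactly the claim you correctly isolate as the crux \wh so you have located the right difficulty, but the chain-level construction you propose cannot close it.
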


\begin{proof}
  By the universal example \wref[,]{eqwhct} the class
  \w{\gamma\in\pi\sb{2}\pi\sb{2k-1}\Wd} of \wref{eqwhprodrep} is a cycle representing
  \w{[E\alpha,E\beta]} (modulo lower filtration, which is \w[).]{\Image(E)}
\begin{enumerate}
\renewcommand{\labelenumi}{(\alph{enumi})~}
\item If \w[,]{[\alpha,\beta]=0} then \w{\delta:=[s\sb{0}\alpha,\,s\sb{1}\beta]}
  is another cycle which survives to \w{\pis\Sigma\bY} whose double is $\gamma$.
\item When $k$ is even, \w[,]{\gamma=0} so \w{[E\alpha,E\beta]} vanishes modulo
  suspensions.
\end{enumerate}
\end{proof}

\begin{examples}\label{egsusp}

\begin{enumerate}
\renewcommand{\labelenumi}{(\alph{enumi})~}
\item When \w{\bY=\bS{1}} and \w[,]{\alpha=\beta=\iot{1}}
then \w{[\alpha,\beta]=0} in \w{\pi\sb{1}\bS{1}} and the Lemma yields the Hopf map
\w[,]{\eta\sb{2}\in\pi\sb{3}\bS{2}} with \w{2\eta\sb{2}=[\iot{2},\iot{2}]}
(since there are no elements in filtration $0$ in this dimension).
\item When \w{\bY=\bS{3}} and \w[,]{\alpha=\beta=\iot{3}}
again \w{[\alpha,\beta]=0} in \w{\pi\sb{5}\bS{3}} and the Lemma yields
\w[,]{\nu\sb{4}\in\pi\sb{7}\bS{4}} and in fact
\begin{myeq}\label{eqwhsqfour}
2\nu\sb{4}=[\iot{4},\iot{4}]+E\nu'
\end{myeq}
\noindent by \cite[(5.8)]{TodC}, although we cannot determine the extension
using the spectral sequence alone.
\item When \w{\bY=\bS{4}} and \w[,]{\alpha=\beta=\iot{4}} we have
\w[,]{[\iot{5},\iot{5}]=\nu\sb{5}\circ\eta\sb{8}=E(\nu\sb{4}\circ\eta\sb{7})}
where the first equality is \cite[(5.10)]{TodC}.
\item When \w{\bY=\bS{6}} and \w[,]{\alpha=\beta=\iot{6}} we have
\w[.]{[\iot{7},\iot{7}]=0}
\item When \w{\bY=\bS{7}} and \w[,]{\alpha=\beta=\iot{7}}
we have \w{[\alpha,\beta]=0} and the Proposition yields
\w[,]{\sigma\sb{8}\in\pi\sb{15}\bS{8}} with
\begin{myeq}\label{eqwhsqeight}
2\sigma\sb{8}=[\iot{8},\iot{8}]+E\sigma'
\end{myeq}
\noindent by \cite[(5.16)]{TodC}.
\item When \w{\bY=\bS{8}} and \w[,]{\alpha=\beta=\iot{8}} we have
$$
[\iot{9},\iot{9}]=
\sigma\sb{9}\circ\eta\sb{16}+\bar{\nu}\sb{9}+\epsilon\sb{9}=
E(\sigma\sb{8}\circ\eta\sb{15}+\bar{\nu}\sb{8}+\epsilon\sb{8})~,
$$
\noindent where the first equality is \cite[(7.1)]{TodC}.
\end{enumerate}
\end{examples}

\begin{remark}\label{rsigmappp}
As noted above, in the spectral sequence for \w[,]{\Wd:=\bS{4}\otimes S\sp{1}}
the $1$-chain
\w{\delta\sb{4}:=[s\sb{0}\iot{4},\,s\sb{1}\iot{4}]\in\pi\sb{7}\bW\sb{2}}
is not a cycle, since
\w{d\sb{0}\delta\sb{4}=[\iot{4},\iot{4}]=2\nu\sb{4}-E\nu'}
by \wref[.]{eqwhsqfour} Since \w{E\nu'\circ\eta\sb{7}\neq 0} and
\w[,]{E\nu'\circ\eta\sb{7}\circ\eta\sb{8}\neq 0} also \w{\delta\sb{4}\circ\eta\sb{7}}
and \w{\delta\sb{4}\circ\eta\sb{7}\circ\eta\sb{8}} are not cycles. Similarly,
$$
(2\nu\sb{4}-E\nu')\circ(2\nu\sb{7})=
4\nu\sb{4}\circ\nu\sb{7}-(2E\nu')\circ\nu\sb{7}=
4\nu\sb{4}\circ\nu\sb{7}-\eta\sp{3}\circ\nu\sb{7}=4\nu\sb{4}\circ\nu\sb{7}\neq 0~,
$$
\noindent since \w{2\nu'=\eta\sp{3}} and already \w[,]{\eta\sb{5}\circ\nu\sb{6}=0}
while \w{\nu\sb{4}\circ\nu\sb{7}} has order $8$ in\w[.]{\pi\sb{10}\bS{4}}
This implies that \w{2\delta\sb{4}\circ\nu\sb{4}} and thus
\w{\delta\sb{4}\circ\nu\sb{4}} are not cycles, but \w{\delta\sb{4}\circ\eta\sp{3}}
is a cycle in \w[.]{\pi\sb{10}\bW\sb{2}} Since it cannot bound anything,
it must therefore represent the generator \w{\sigma'''} of
\w[.]{\pi\sb{12}\bS{5}\lo{2}\cong\ZZ/2}

Similarly, in the spectral sequence for \w[,]{\Wd:=\bS{6}\otimes S\sp{1}}
\w{\delta\sb{6}:=[s\sb{0}\iot{6},\,s\sb{1}\iot{6}]\in\pi\sb{11}\bW\sb{2}}
is not a cycle, since
\w{d\sb{0}\delta\sb{6}~=~[\iot{6},\iot{6}]\in\pi\sb{11}\bS{6} \cong \ZZ}
has infinite order. However, \w{\delta\sb{6}\circ\eta\sb{11}} is the only possible
representative for \w[,]{\sigma'\in\pi\sb{14}\bS{7}} of order $8$, since it is not
a suspension (though \w[).]{2\sigma'=E\sigma''} Incidentally,
this shows that \w{[\iot{6},\iot{6}]\circ\eta\sb{11}=0} in
\w[.]{\pi\sb{12}\bS{6}}
\end{remark}

We conclude this section by showing

\begin{thm}\label{twhitpgen}
All elements in the homotopy groups of a finite wedge of
simply-connected spheres are generated of higher order by the
fundamental classes and their Whitehead products.
\end{thm}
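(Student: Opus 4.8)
The plan is to feed Corollary~\ref{catomic} into the explicit suspension-spectral-sequence calculations made in this section. By Corollary~\ref{catomic}, every homotopy class of a map between finite wedges of simply-connected spheres is built from the fundamental classes \w{\iot{k}} and the atomic maps \w{\eta\sb{k},\nu\sb{k},\sigma\sb{k}} and \w{\alpha\sb{1}(p)} using group operations, composition, Whitehead products, and higher order simplicial operations. Since fundamental classes and Whitehead products are already among the permitted inputs, it is enough to exhibit each of these finitely many families of atomic maps as being generated, under these same operations, from fundamental classes and Whitehead products alone. Moreover, by Remark~\ref{rsusp} the suspension \w{E} realizes every element of \w{\pis\bY} as a filtration-$1$ value in the suspension spectral sequence for \w[,]{\bY\otimes S\sp{1}} so it suffices to treat the bottom member of each family --- \w[,]{\eta\sb{2}\in\pi\sb{3}\bS{2}} \w[,]{\nu\sb{4}\in\pi\sb{7}\bS{4}} \w[,]{\sigma\sb{8}\in\pi\sb{15}\bS{8}} and \w{\alpha\sb{1}(p)\in\pi\sb{2p}\bS{3}} --- all the others being iterated suspensions of these.

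For the three Hopf maps I would argue exactly as in Examples~\ref{egsusp} and Remark~\ref{rsigmappp}. In the suspension spectral sequence for \w[,]{\bS{1}\otimes S\sp{1}} the class \w{[s\sb{0}\iot{1},s\sb{1}\iot{1}]} of \S\ref{swhite} is a permanent cycle whose double represents the Whitehead square \w[;]{[\iot{2},\iot{2}]} since \w{\pi\sb{3}\bS{2}} has no part in filtration \w{\leq 1} this cycle represents \w{\eta\sb{2}} itself, so \w{\eta\sb{2}} is a value of a third order simplicial operation whose only non-degenerate ingredient is a (commutator, hence Whitehead) product of degeneracies of \w[.]{\iot{1}} For \w{\nu\sb{4}} and \w{\sigma\sb{8}} one uses instead the spectral sequences for \w{\bS{3}\otimes S\sp{1}} and \w[:]{\bS{7}\otimes S\sp{1}} as \w{\bS{3}} and \w{\bS{7}} are $H$-spaces the Whitehead squares \w{[\iot{3},\iot{3}]} and \w{[\iot{7},\iot{7}]} vanish, so \w{[s\sb{0}\iot{3},s\sb{1}\iot{3}]} and \w{[s\sb{0}\iot{7},s\sb{1}\iot{7}]} are permanent cycles, and by \wref{eqwhsqfour} and \wref{eqwhsqeight} the classes they represent equal \w{\nu\sb{4}} and \w{\sigma\sb{8}} up to the suspended classes \w{E\nu'} and \w[.]{E\sigma'} Finally \w{\nu'} is represented by \w{[s\sb{0}\iot{2},s\sb{1}\iot{2}]\circ\eta\sb{3}} in the spectral sequence for \w{\bS{2}\otimes S\sp{1}} (Example~\ref{egnup}) and \w{\sigma'} by \w{\delta\sb{6}\circ\eta\sb{11}} in that for \w{\bS{6}\otimes S\sp{1}} (Remark~\ref{rsigmappp}), with \w{\eta\sb{3}} and \w{\eta\sb{11}} suspensions of the already-constructed \w[;]{\eta\sb{2}} iterating this prime by prime, and invoking further relations of \cite{TodC} as in Examples~\ref{egsusp}, disposes of all of \w[.]{\eta\sb{k},\nu\sb{k},\sigma\sb{k}}

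The remaining case \w{\alpha\sb{1}(p)} at odd primes is, I expect, the main obstacle. For \w{p=3} it reduces to the previous paragraph, since \w{\alpha\sb{1}(3)} is the order-$3$ element of \w[,]{\pi\sb{6}\bS{3}\cong\ZZ/12} hence a multiple of the generator \w[,]{\nu'} which has just been written in terms of Whitehead products (one need only run Example~\ref{egnup} $3$-locally, replacing \w{\eta\sb{3}} by the appropriate $3$-local composite). For \w{p\geq 5} there is no low-dimensional Whitehead square composing onto \w[,]{\alpha\sb{1}(p)} so Example~\ref{egnup} cannot be copied verbatim. The approach I would take is to work in the suspension spectral sequence for \w{\bS{2}\otimes S\sp{1}} --- which converges to \w{\pis\bS{3}} and has vanishing filtration~$0$ --- and to realize \w{\alpha\sb{1}(p)} as a canonical value of a simplicial higher order operation of order \w[,]{>3} whose compatible collection is assembled from iterated Whitehead products of the degeneracies \w[.]{s\sb{I}\iot{2}} By Hilton's theorem the groups \w{\pis((\bS{2})\sp{\vee m})} occurring as the \w{\bW\sb{m}} are generated by composites of such iterated Whitehead products with lower-dimensional classes in \w[,]{\pis\bS{m'}} and below total degree \w{2p} every such class is, by downward induction on the cell dimension \w[,]{m'} already generated by Whitehead products together with the maps \w[,]{\eta\sb{\ast},\nu\sb{\ast},\sigma\sb{\ast}} hence by the previous paragraph by Whitehead products alone.

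The technical heart of this last step will be to verify that a suitably chosen Whitehead-product cycle is a \emph{permanent} cycle representing \emph{exactly} the order-$p$ class \w[,]{\alpha\sb{1}(p)} and not a proper multiple of it nor an element of \w[;]{\Image(E)} for this one must bring in the $p$-local EHP sequence, whose \w{P}-map is a Whitehead product, to pin down both the order and the filtration. Once all four families \w{\eta\sb{k},\nu\sb{k},\sigma\sb{k},\alpha\sb{1}(p)} have been so expressed, Corollary~\ref{catomic} gives the theorem.
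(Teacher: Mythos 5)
Your reduction to the four atomic families via Corollary~\ref{catomic} and Remark~\ref{rsusp}, and your treatment of $\eta_2$, $\nu_4$ and $\sigma_8$ through the suspension spectral sequences for $\bS{1}\otimes S^1$, $\bS{3}\otimes S^1$ and $\bS{7}\otimes S^1$, coincide with the paper's proof, which cites Lemma~\ref{lsusp}(a) and Examples~\ref{egsusp} for exactly these three representatives (the corrections by $E\nu'$ and $E\sigma'$ are absorbed into lower filtration, i.e.\ suspensions, as you say). The gap is in the case you yourself flag as the main obstacle, $\alpha_1(p)$ for $p\geq 5$: you propose to ``verify that a suitably chosen Whitehead-product cycle is a \emph{permanent} cycle representing \emph{exactly} $\alpha_1(p)$'', pinning down its order and filtration with the $p$-local EHP sequence. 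That verification is never carried out, and it is also not the right target: no particular cycle needs to be exhibited or identified. Since the suspension spectral sequence for $\bW_\bullet=\bS{2}_{(p)}\otimes S^1$ converges to $\pi_*\bS{3}_{(p)}$ and has trivial filtration $0$, the class $\alpha_1(p)$, not being a suspension, is automatically detected by \emph{some} permanent cycle in $\pi_{2p-k}\bW_k$ for some $k\geq 2$; existence is free, and no EHP input is required.

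What is actually needed --- and what your sketch omits --- is the observation that \emph{every} class in $\pi_{2p-k}\bW_k$ in the relevant range is an iterated Whitehead product, so that whatever detects $\alpha_1(p)$ is already of the required form. This is where the paper localizes at $p$ and uses (\ref{eqhtpyoddp}): $p$-locally $\pi_i\bS{2}$ is generated by $\iota_2$ for $i=2$, by $[\iota_2,\iota_2]$ for $i=3$ (as $2$ is invertible, $\eta_2$ is a multiple of the Whitehead square), and vanishes for $4\leq i<2p$, while for $m\geq 3$ the first $p$-local class of $\bS{m}$ above the fundamental one lies in degree $m+2p-3>2p-k$. Hence in the Hilton decomposition of $\pi_{2p-k}\bW_k$ (a wedge of copies of $\bS{2}$) no composite $w\circ\xi$ with $\xi$ other than a fundamental class or a Whitehead square can occur. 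Your alternative of handling such composites by an induction on total degree could be made to work, but as written it is entangled with the unnecessary ``explicit cycle'' step, which is precisely where your argument stalls; the $p$-local vanishing range makes the whole issue disappear. (Your $p=3$ shortcut via $\alpha_1(3)=4\nu'$ is fine and consistent with Example~\ref{egalpha}.)
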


\begin{proof}
Since any suspension is the value of a secondary homotopy operation (see \S \ref{rsusp}),
by Corollary \ref{catomic}, it suffices to show this for
\w[,]{\eta\sb{2}\in\pi\sb{3}\bS{2}} \w[,]{\nu\sb{4}\in\pi\sb{7}\bS{4}} and
\w[,]{\sigma\sb{8}\in\pi\sb{15}\bS{8}} and the maps
\w{\alpha\sb{1}(p)\in\pi\sb{2p}\bS{3}} for odd primes $p$.

Indeed, from \wref{eqwhco} we see that by Lemma \ref{lsusp}(a):
\begin{enumerate}
\renewcommand{\labelenumi}{(\alph{enumi})~}
\item \w{\eta\sb{2}} is represented by either half of
$\omega$ \wh e.g.,
\w{h:=s\sb{0}\iot{1}\cdot s\sb{1}\iot{1}
  \cdot s\sb{0}\iot{1}\cdot\sp{-1} s\sb{1}\iot{1}\sp{-1}} in\w{\pi\sb{1}\bW\sb{2}}
for \w[;]{\Wd=\bS{1}\otimes S\sp{1}}
\item \w{\nu\sb{4}} is represented by
  \w{[s\sb{0}i\sb{3},\,s\sb{1}i\sb{3}]\in\pi\sb{5}\bW\sb{2}} for
  \w[;]{\Wd=\bS{3}\otimes S\sp{1}}
\item \w{\sigma\sb{8}} is represented by
  \w{[s\sb{0}i\sb{7},\,s\sb{1}i\sb{7}]\in\pi\sb{13}\bW\sb{2}} for
  \w[.]{\Wd=\bS{7}\otimes S\sp{1}}
\end{enumerate}

Now for any odd prime $p$ we have
\begin{myeq}\label{eqhtpyoddp}
\pi\sb{i}\bS{2}\lo{p}~:=~\begin{cases}
\Zp\lra{\iot{2}} & \text{for}\ \ i=2\\
\Zp\lra{[\iot{2},\iot{2}]} & \text{for}\ \ i=3\\
0& \text{for} \ 4\leq i< 2p
\end{cases}
\end{myeq}
\noindent with \w[.]{[[\iot{2},\iot{2}],\iot{2}]=0}
Since \w{\alpha\sb{1}(p)\in\pi\sb{2p}\bS{3}} is not a suspension, it must appear in
filtration \w{k\geq 2} and in dimension \w{i=2p-k} in the spectral sequence for
\w[,]{\Wd=\bS{2}\lo{p}\otimes S\sp{1}} and all classes in \w{\pi\sb{i}\bW\sb{k}} are
iterated Whitehead products, by \wref{eqhtpyoddp} and Hilton's Theorem.
\end{proof}

\begin{example}\label{egalpha}
A Hall basis for the lowest dimension $2$-cycles for \w{\Wd=\bS{2}\lo{3}\otimes S\sp{1}}
is given by
\w{\vare\sb{1}:=[[s\sb{0}\iot{2},s\sb{1}\iot{2}],\,s\sb{0}\iot{2}]} and
\w{\vare\sb{2}:=[[s\sb{0}\iot{2},s\sb{1}\iot{2}],\,s\sb{1}\iot{2}]}
in \w[;]{\pi\sb{4}\bW\sb{2}} a Hall basis for the non-degenerate classes in
\w{\pi\sb{4}\bW\sb{3}} is given by \w{\theta\sb{1}:=
  [[s\sb{2}s\sb{0}\iot{2},s\sb{1}s\sb{0}\iot{2}],\,s\sb{2}s\sb{1}\iot{2}]}
and \w[.]{\theta\sb{2}:=
  [[s\sb{2}s\sb{1}\iot{2},s\sb{1}s\sb{0}\iot{2}],\,s\sb{2}s\sb{0}\iot{2}]}

We have \w{d\sb{0}\theta\sb{i}=0} and \w{d\sb{3}\theta\sb{i}=0} for \w[.]{i=1,2}
Since \w{[s\sb{1}\iot{2},s\sb{1}\iot{2}]=2(s\sb{1}\iot{2}\circ\eta\sb{2})}
and
\begin{myeq}\label{eqhtpymodt}
[s\sb{0}\iot{2},s\sb{1}\iot{2}\eta\sb{2}]~=~
[s\sb{0}\iot{2},s\sb{1}\iot{2}]\circ\eta\sb{3}~-~
[[s\sb{0}\iot{2},s\sb{1}\iot{2}],\,s\sb{1}\iot{2}]~\equiv~
-\vare\sb{2}\pmod{3}
\end{myeq}
\noindent by \cite[Theorem 3.24]{BHilJ}, we see that
\w[,]{d\sb{1}\theta\sb{1}=\vare\sb{2}} and \w[,]{d\sb{2}\theta\sb{1}=-2\vare\sb{2}}
so \w{[\vare\sb{2}]} has order $3$ in \w[.]{E\sp{2}\sb{2,4}=\pi\sb{2}\pi\sb{4}\Wd}
Similarly, \w[,]{d\sb{1}\theta\sb{2}=\vare\sb{2}} and
\w[,]{d\sb{2}\theta\sb{2}=\vare\sb{1}} so \w[.]{[\vare\sb{1}]=[\vare\sb{2}]}
Thus the generator \w{\alpha\sb{1}(3)} for \w{\pi\sb{6}\bS{3}\lo{3}\cong\ZZ/3}
is represented by either of these two classes.
\end{example}

%
%
\sect{Complex projective spaces}
\label{ccpn}

For each \w[,]{n\geq 1} the complex projective space \w{\CP{n}} fits into
a fibration sequence
\begin{myeq}\label{eqhopfcpn}
\bS{1}~\hra~\bS{2n+1}~\xra{g\sb{n}}~\CP{n}~,
\end{myeq}
\noindent with \w{g\sb{n}} a Hopf map, as well as a homotopy cofibration sequence
\begin{myeq}\label{eqcofcpn}
\bS{2n+1}~\xra{g\sb{n}}~\CP{n}~\xra{j\sb{n}}~\CP{n+1}~,
\end{myeq}
\noindent starting with \w{\CP{1}=\bS{2}} and \w[.]{g\sb{1}=\eta\sb{2}}
Thus \w{g\sb{n}} determines an isomorphism
\w{\pi\sb{i}\bS{2n+1}\cong\pi\sb{i}\CP{n}} for \w[,]{i\neq 2}
with \w{\pi\sb{2}\CP{n}\cong\ZZ} generated by
\w[.]{\iot{2}=j\sb{n-1}\circ\dotsc\circ j\sb{1}}

It is nevertheless illuminating to analyze how all elements in \w{\pis\CP{n}}
are generated by the unique indecomposable \w{\iot{2}} using (higher) homotopy
operations, as follows:

\begin{mysubsection}{The four dimensional complex projective space}
\label{scptwo}
Our approach is necessarily inductive, starting with
\w[.]{\CP{2}} From \wref{eqcofcpn} we see that a (minimal) simplicial resolution
\w{\Wd} for \w{\CP{2}} as in Section \ref{cssa} must start with
\w{\bW\sb{0}=\bS{2}} and \w[,]{\bW\sb{1}=\bS{3}\vee s\sb{0}\bS{2}} with
\w{d\sb{0}} given by the Hopf map \w{\eta\sb{2}=g\sb{1}:\bS{3}\to\CP{1}=\bS{2}}
on the summand \w[.]{\bS{3}}

As in \S \ref{swhite}, the lowest dimensional $1$-cycle in \w{\pi\sb{i}\Wd}
(for \w[)]{i=4} is given by the Whitehead product
\w[.]{\gam{2}=[\iot{3},s\sb{0}\iot{2}]} We see that indeed
\begin{myeq}\label{eqfirsttodacpt}
d\sb{0}(\gam{2})~=~[\eta\sb{2},\iot{2}]~=~
[\iot{2},\iot{2}]\circ\eta\sb{3}~=~(2\eta\sb{2})\circ\eta\sb{3}~=~0~,
\end{myeq}
\noindent and since this is a permanent cycle which cannot be hit by a differential (for
dimension reasons), it must represent \w{g\sb{2}:\bS{5}\to\CP{2}} (up to sign).

Note that \wref{eqfirsttodacpt} exhibits \w{g\sb{2}} as a canonical value of
a secondary operation in the sense of Proposition \ref{pfiltrationn} (though
not as a Toda bracket in the usual sense). Moreover, this also defines a map
$\widehat{g}$ from the simplicial space \w{\Vd:=\bS{4}\otimes S\sp{1}}
to the simplicial $1$-skeleton of \w{\Wd} described above
(and thus to \w{\Wd} itself), realized by \w[:]{g\sb{2}}
\myqdiag[\label{eqmapss}]{
  \dotsc & \bV\sb{2} && \bV\sb{1} & \bV\sb{0}\\
  \dotsc & s\sb{0}\bS{4}\vee s\sb{1}\bS{4}
    \ar@<2ex>[rr]\sp(0.5){d\sb{0}} \ar[rr]\sp(0.5){d\sb{1}}
    \ar@<-1ex>[rr]\sb(0.5){d\sb{2}}
    \ar[dd]\sb{\widehat{g}\sb{2}=}
    \sp{[s\sb{0}\iot{3},s\sb{1}s\sb{0}\iot{2}]\bot
      [s\sb{1}\iot{3},s\sb{1}s\sb{0}\iot{2}]}
    &&
    \bS{4} \ar[dd]\sb{\widehat{g}\sb{1}=}\sp{[\iot{3},s\sb{0}\iot{2}]}
 \ar@<0.5ex>[r]\sp(0.5){d\sb{0}}\ar@<-0.5ex>[r]\sb(0.5){d\sb{1}} &
 \ast \ar[dd]\sp{\widehat{g}\sb{0}} \\ \\
 \dotsc & s\sb{0}\bS{3}\vee s\sb{1}\bS{3}\vee s\sb{1}s\sb{0}\bS{2}
    \ar@<2ex>[rr]\sp(0.7){d\sb{0}} \ar[rr]\sp(0.7){d\sb{1}}
  \ar@<-1ex>[rr]\sb(0.7){d\sb{2}} && \bS{3}\vee s\sb{1}s\sb{0}\bS{2}
 \ar@<0.5ex>[r]\sp(0.7){d\sb{0}}\ar@<-0.5ex>[r]\sb(0.7){d\sb{1}} & \bS{2}\\
   \dotsc & \bW\sb{2} && \bW\sb{1} & \bW\sb{0}
}
\noindent From the description
in \S \ref{ssuspss} we see that for any map \w[,]{f:\bS{N}\to\bS{4}} precomposition
with \w{f\otimes S\sp{1}:\bS{N}\otimes S\sp{1}\to\Vd}
induces precomposition with \w{\Sigma f} in \w[.]{\pi\sb{5}\CP{2}} Thus
\w{[\gam{2}\circ f]\in\pi\sb{N}\bW\sb{1}} represents
\w[.]{[g\sb{2}\circ\Sigma f]\in\pi\sb{N+1}\CP{2}}
\end{mysubsection}

\begin{remark}\label{rcompcpt}
Any class \w{\alpha\in\pi\sb{k}\bS{5}\cong\pi\sb{k}\CP{2}} \wb{k>5} which is
not a suspension is represented by an element in higher filtration in the
spectral sequence for \w[.]{\Vd} The map of simplicial spaces
\w{\widehat{g}:\Vd\to\Wd} then induces a map of spectral sequences
\w[,]{\widehat{g}\sb{\ast}} yielding a representative for the
composite \w{g\sb{2}\circ\alpha} (which in this case is always non-trivial
since \w{(g\sb{2})\sb{\#}} is an isomorphism).
\end{remark}

\begin{example}\label{egcompcpt}
The first such class is
\w[,]{\sigma'''\in\pi\sb{12}\bS{5} \cong\ZZ/30} of order $2$. By \S \ref{rsigmappp},
this is represented by the element
\w{[s\sb{0}\iot{4},\,s\sb{1}\iot{4}]\circ\eta\sp{3}} in
\w[,]{E\sp{1}\sb{2,10}=\pi\sb{10}\bV\sb{2}} which maps under
\w{(\widehat{g}\sb{2})\sb{\ast}} to the iterated Whitehead product
\begin{equation*}
\begin{split}
&[[s\sb{0}\iot{3},s\sb{1}s\sb{0}\iot{2}],\
  [s\sb{1}\iot{3},s\sb{1}s\sb{0}\iot{2}]]~\circ\eta\sp{3}\\
  \quad&~~~~~~~~=~[[[s\sb{1}\iot{3},s\sb{1}s\sb{0}\iot{2}],\
      s\sb{1}s\sb{0}\iot{2}],\ s\sb{0}\iot{3}]~\circ\eta\sp{3}~+~
[[[s\sb{1}\iot{3},s\sb{1}s\sb{0}\iot{2}],\ s\sb{0}\iot{3}],\
  s\sb{1}s\sb{0}\iot{2}]~\circ\eta\sp{3}
\end{split}
\end{equation*}
\noindent in \w[.]{\pi\sb{10}\bW\sb{2}}
\end{example}

\begin{mysubsection}{Higher projective spaces}
\label{scphigh}
A simplicial resolution for \w{\CP{3}} will start with
\w{\bW\sb{0}=\bS{2}} and \w[,]{\bW\sb{1}=\bS{3}\vee s\sb{0}\bS{2}} as for
\w[,]{\CP{2}} but now we set
\w[,]{\bW\sb{2}:=\bS{4}\vee s\sb{0}\bS{3}\vee s\sb{1}\bS{3}\vee s\sb{1}s\sb{0}\bS{2}}
with \w{d\sb{0}} given by the representative for \w{g\sb{2}:\bS{5}\to\CP{2}} \wwh
that is, by \w{\gam{2}=[\iot{3},s\sb{0}\iot{2}]} \wwh
on the summand \w[.]{\bS{4}}

As before, the lowest dimensional $2$-cycle in \w{\pi\sb{i}\Wd} is given by
\begin{myeq}\label{eqgammathree}
\gam{3}~:=~[\iot{4},s\sb{1}s\sb{0}\iot{2}]~-~
         [s\sb{0}\iot{3},s\sb{1}\iot{3}]~\in~\pi\sb{5}\bW\sb{2}~.
\end{myeq}
\noindent By the Jacobi identity
$$
[[\iot{3},s\sb{0}\iot{2}],s\sb{0}\iot{2}]~+~
[[s\sb{0}\iot{2},s\sb{0}\iot{2}],\iot{3}]~+~
[[s\sb{0}\iot{2},\iot{3}],s\sb{0}\iot{2}]~=~0~,
$$
\noindent so
$$
2[[\iot{3},s\sb{0}\iot{2}],s\sb{0}\iot{2}]~=~
-[[s\sb{0}\iot{2},s\sb{0}\iot{2}],\iot{3}]~=~
-2[s\sb{0}\iot{2}\circ\eta\sb{2},\iot{3}]~=~
2[\iot{3},s\sb{0}\iot{2}\circ\eta\sb{2}]~.
$$
\noindent Therefore,
\begin{equation*}
d\sb{0}(\gam{3})~=~
[[\iot{3},s\sb{0}\iot{2}],s\sb{0}\iot{2}]-
[\iot{3},s\sb{0}\iot{2}\circ\eta\sb{2}]
~=~[\iot{3},s\sb{0}\iot{2}\circ\eta\sb{2}]
-[\iot{3},s\sb{0}\iot{2}\circ\eta\sb{2}]=0~,
\end{equation*}
\noindent Since \w[,]{d\sb{1}\gam{3}=[\iot{3},\iot{3}]=0} and
\w[,]{d\sb{2}\gam{3}=0} too, this is a permanent cycle. It cannot be hit
by a differential (for dimension reasons), so it must represent
\w{g\sb{3}:\bS{7}\to\CP{3}} as a third-order simplicial operation.

It also defines a map \w{\widehat{g}\sb{3}} from the simplicial space
\w{\Ud:=\bS{5}\otimes S\sp{2}} to \w[,]{\Wd} realized by \w[.]{g\sb{3}}
Once more, for any map \w[,]{f:\bS{N}\to\bS{5}} precomposition
with \w{f\otimes S\sp{2}:\bS{N}\otimes S\sp{2}\to\Ud}
shows that \w{[\gam{3}\circ f]\in\pi\sb{N}\bW\sb{2}} represents
\w[.]{[g\sb{3}\circ\Sigma\sp{2}f]\in\pi\sb{N+2}\CP{3}}

Any class in \w{\pi\sb{k}\bS{7}\cong\pi\sb{k}\CP{3}} \wb{k>7} which is
not a double suspension is represented by an element in higher filtration in the
spectral sequence for \w[.]{\Vd} The first such class is
\w{\sigma'\in\pi\sb{14}\bS{7} \cong \ZZ / 120} \wwh in fact, \w{\sigma'} is not a suspension,
and \w{2\sigma'=E\sigma''} is not a double suspension, but
\w[.]{4\sigma'=E\sp{2}\sigma'''}
\end{mysubsection}

\begin{mysubsect}{Some combinatorics}
\label{scomb}

In order to describe the general case, we require some combinatorial notions:
\end{mysubsect}

\begin{defn}\label{dsign}
Given a $k$-tuple \w{I=(i\sb{1}<\dotsc<i\sb{k})} of non-negative integers (in ascending
order), let $\uI$ denote the underlying unordered set (and conversely),
and \w{s\sb{I}=s\sb{i\sb{k}}s\sb{i\sb{k-1}}\dotsc s\sb{i\sb{2}}s\sb{i\sb{1}}} the
corresponding iterated degeneracy map, so that
\begin{myeq}\label{eqdegens}
  \forall ~ x~\exists ~y\ s\sb{I}x=s\sb{j}y\hs\text{if and only if}\hs j\in I~.
\end{myeq}

If $\uI$ and $\uJ$ are disjoint sets of natural numbers, \w{I\sqcup J=J\sqcup I}
will denote the disjoint union \w[,]{\uI\sqcup\uJ} in ascending order.

Given two finite sets $I$ and $J$ of non-negative integers, each arranged in
ascending order, their \emph{sign} \w{\sgn{I,J}} is defined as follows:

\begin{enumerate}
\renewcommand{\labelenumi}{(\roman{enumi})~}
\item If $\uI$ and $\uJ$ form a partition of \w[,]{\{1,2,\dotsc,n\}} we write
\w{\sigma\sb{(I,J)}\in S\sb{n}} for the permutation obtained by concatenating
$I$ with $J$, and let \w[.]{\sgn{I,J}:=\sgn{\sigma\sb{(I,J)}}}

\item If $\uI$ and $\uJ$ are disjoint subsets of \w[,]{\{0,1,2,\dotsc,N\}} let
\w{\pi:I\sqcup J\to\{1,2,\dotsc,n\}} be an order-preserving
isomorphism, with \w[.]{\sgn{I,J}:=\sgn{\pi[I],\pi[J]}}
\item If $\uI$ and $\uJ$ are any two finite sets $I$ and $J$ of non-negative integers,
  let \w{I':=\uI\setminus\uJ} and \w{J':=\uJ\setminus\uI} (each in ascending order),
  and set \w[.]{\sgn{I,J}:=\sgn{I',J'}}
\end{enumerate}
\end{defn}

\begin{example}\label{egsign}
For \w{\uI=\{2,4\}} and \w[,]{\uJ=\{1,3,5\}} we have
\w[,]{\sigma\sb{(I,J)}=\binom{1\ 2\ 3\ 4\ 5}{2\ 4\ 1\ 3\ 5}} so \w[.]{\sgn{I,J}=-1}

For \w{\uI=\{1,5\}} and \w[,]{\uJ=\{0,2,7\}} the map $\pi$ is given
by \w{\binom{0\ 1\ 2\ 5\ 7}{1\ 2\ 3\ 4\ 5}} and so
\w{\pi[I]=\{2,4\}} and \w[,]{\pi[J]=\{1,3,5\}} and thus \w[.]{\sgn{I,J}=-1}

For \w{\uI=\{1,3,5,6\}} and \w[,]{\uJ=\{0,2,3,6,7\}} we have
\w{I'=\{1,5\}} and \w[,]{J'=\{0,2,7\}} so again \w[.]{\sgn{I,J}=-1}
\end{example}

\begin{notn}\label{ncpns}
Given \w[,]{n\geq 2} for each \w[,]{0\leq k<n-1} let \w{\cII\sb{k}\sp{n}}
denote the collection of all \wwb{k,n-k-1}partitions \w{(I,J)} of \w[.]{\{0,\dotsc,n-2\}}
When \w[,]{2k=n-1} we include in \w{\cII\sb{k}\sp{n}} only sets with
\w{0\in I} (to avoid double counting).
\end{notn}

\begin{remark}\label{rcpns}
If \w{(I',J')} is obtained from \w{(I,J)} by switching a pair of
elements between $I$ and $J$ (while maintaining ascending order),
then \w[.]{\sgn{I',J'}=-\sgn{I,J}} As a result, if
\w{\uI\setminus\uJ} has cardinality $k$ and \w{\uJ\setminus\uI}
has cardinality $\ell$, then
\begin{myeq}\label{eqsgncompl}
\sgn{I,J}~=~(-1)\sp{k\cdot\ell}\sgn{J,I}.
\end{myeq}

Finally, if \w{M=(m\sb{1},\dotsc,m\sb{k})} is an ordered $k$-tuple of natural numbers (in
ascending order), denote its underlying set by $\uM$. Conversely,
For any decomposition \w[,]{\uM\sqcup\uN\sqcup\uP=\{0,\dotsc,n-1\}} the
sign of corresponding three-fold shuffle \w{(M,N,P)} satisfies:
\begin{myeq}\label{eqsgndisunl}
\sgn{M,N\sqcup P}\cdot\sgn{N,P}~=~\sgn{M,N,P}~=~\sgn{M\sqcup N,P}\cdot\sgn{M,N}~.
\end{myeq}
\end{remark}

We can now state our main result for this section:

\begin{prop}\label{pcpn}
For each \w[,]{n\geq 1} there is a rational sequential approximation \w{\Wd} for
\w{\CP{n}} with a single non-degenerate \wwb{k+2}sphere in \w{\bW\sb{k}} for each
\w[.]{0\leq k<n} The Hopf map \w{g\sb{n}:\bS{2n+1}\to\CP{n}} is represented
in the homotopy spectral sequence for \w{\Wd} by the \wwb{n-1}cycle
\begin{myeq}\label{eqgamman}
\gam{n}~:=~
\sum\sb{j=2}\sp{\lfloor\frac{n+3}{2}\rfloor}\ \sum\sb{(I,J)\in\cII\sb{j-2}\sp{n}}\
(-1)\sp{n\cdot j}
\sgn{I,J}\cdot[s\sb{I}\iot{n-j+3},\ s\sb{J}\iot{j}]
\end{myeq}
\noindent in \w[.]{\pi\sb{n+2}\bW\sb{n-1}} This also serves as the face map
\w{d\sb{0}} on \w{\iot{n+1}\in\pi\sb{n+1}\bS{n+1}}
in simplicial dimension \w{n-1} in the sequential approximation for \w[.]{\CP{n+1}}
\end{prop}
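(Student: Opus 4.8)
The plan is to argue by induction on $n$, the low cases being the explicit treatments of $\CP{2}$ and $\CP{3}$ in \S\ref{scptwo} and \S\ref{scphigh}. Assume the proposition for all $m\le n$; in particular $\CP{n}$ has a sequential approximation $\Wd$ of the asserted shape, with $g\sb{n}$ represented by the cycle $\gam{n}\in\pi\sb{n+2}\bW\sb{n-1}$. Since $g\sb{n}$ is a permanent cycle of filtration $n-1$, we may pick a representative landing in $Z\sb{n-1}\Wd$; adjoining to $\Wd$ a single non-degenerate $(n+2)$-sphere in simplicial dimension $n$, with $d\sb{0}$ this map $\bS{n+2}\to Z\sb{n-1}\Wd\hookrightarrow\bW\sb{n-1}$ and all higher faces trivial, yields --- via the cofibration sequence \eqref{eqcofcpn} and the construction of \S\ref{cssa} --- a sequential approximation for $\CP{n+1}$ of the required shape, and records the final assertion that $\gam{n}$ is the $d\sb{0}$ on the newly-added sphere. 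It remains to show that $g\sb{n+1}$ is represented by the cycle $\gam{n+1}$ obtained from \eqref{eqgamman} by replacing $n$ with $n+1$. That the right-hand side is a well-defined element of $\pi\sb{n+3}\bW\sb{n}$ is routine: in the $(j,I)$ term $\iot{n-j+4}$ is the fundamental class of the non-degenerate sphere of $\bW\sb{n-j+2}$, the degeneracies $s\sb{I}$ (with $|I|=j-2$) and $s\sb{\Phi\sp{n+1}(I)}$ (with $|\Phi\sp{n+1}(I)|=n-j+2$) carry it and $\iot{j}$ into $\bW\sb{n}$, and the Whitehead product has degree $n+3$; the $j=2$ term $[\iot{n+2},\,s\sb{\{0,\dots,n-1\}}\iot{2}]$ involves the newly-added sphere.

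The heart of the proof is the identity $d\sb{i}\gam{n+1}=0$ for $0\le i\le n$. As each face map is a map of spaces and Whitehead products are natural, $d\sb{i}\gam{n+1}=\sum\pm[\,d\sb{i}(s\sb{I}\iot{a}),\,d\sb{i}(s\sb{J}\iot{b})\,]$, so it suffices to know the effect of $d\sb{i}$ on a nested degeneracy $s\sb{K}\iot{m}$. By the simplicial identities this is: a degeneracy $s\sb{K'}\iot{m}$ of one lower simplicial degree if $i\in K$ or $i-1\in K$; the class $s\sb{K'}\gam{m-1}=s\sb{K'}(d\sb{0}\iot{m})$ if $i=0$ and $0\notin K$; and trivial otherwise. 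In particular, for $i\ge1$ no $\gam$-subterm can arise, so the surviving summands of $d\sb{i}\gam{n+1}$ are exactly those for which $\{i-1,i\}$ straddles $I$ and $\Phi\sp{n+1}(I)$. I would pair each such summand with the one obtained by exchanging $i-1$ and $i$ between $I$ and its complement (on the diagonal index $j=\tfrac{n}{2}+2$, when $n$ is even, this exchange is absorbed into graded commutativity of the Whitehead product). By Remark \ref{rcpns} the exchange reverses $\sgn(I,\Phi\sp{n+1}(I))$, and a short manipulation of the simplicial identities shows $d\sb{i}$ carries the two summands to the same Whitehead product in $\bW\sb{n-1}$, so they cancel. (The identities $[\iot{3},\iot{3}]=[\iot{7},\iot{7}]=0$ enter only in the low-dimensional cases where the diagonal is a single summand, already handled in \S\ref{scphigh}.)

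The case $i=0$ is the direct analogue of the computation $d\sb{0}\gam{3}=0$ in \S\ref{scphigh}. Now $d\sb{0}(s\sb{I}\iot{a})$ is a degeneracy of $\iot{a}$ if $0\in I$ and is $s\sb{I'}\gam{a-1}$ if $0\notin I$; since $I\sqcup\Phi\sp{n+1}(I)=\{0,\dots,n-1\}$, each summand of $d\sb{0}\gam{n+1}$ is thus a Whitehead product exactly one of whose factors carries a lower $\gam$, and the $j=2$ summand contributes $\pm[\gam{n},\,s\sb{\{0,\dots,n-2\}}\iot{2}]$ by way of $d\sb{0}\iot{n+2}=\gam{n}$. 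Expanding the lower $\gam$'s via \eqref{eqgamman} and applying the Jacobi identity together with $[\iot{2},\iot{2}]=2\eta\sb{2}$ (and its degeneracies) then reduces $d\sb{0}\gam{n+1}$ to a combination of the expressions $d\sb{0}\gam{m}$ with $m\le n$, each zero by induction.

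Finally, $\gam{n+1}$ cannot be the target of a differential for dimension reasons, so it is a permanent cycle of filtration $n$; as $\pi\sb{2n+3}\CP{n+1}\cong\ZZ$ is generated by $g\sb{n+1}$ (from the fibration \eqref{eqhopfcpn}), inspecting the leading $j=2$ term identifies $[\gam{n+1}]$ with $\pm g\sb{n+1}$, just as for $\CP{2}$ and $\CP{3}$, which closes the induction. The step I expect to be the main obstacle is the cycle verification of the two middle paragraphs: one must keep simultaneous track of how the face maps rearrange the nested degeneracies and of how the signs $\sgn(I,\Phi\sp{n+1}(I))$ transform, and the combinatorial apparatus of \S\ref{scomb} --- in particular the relation \eqref{eqsgncompl} and Remark \ref{rcpns} --- is tailored precisely to this bookkeeping.
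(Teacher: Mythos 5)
Your overall architecture agrees with the paper's: induct on $n$, adjoin one non-degenerate sphere per stage so that the final clause of the Proposition records the inductive step, reduce everything to checking that \w{\gam{n+1}} is a cycle, kill the faces \w{d_i} for \w{i\geq 1} by pairing $I$ with the set obtained by swapping \w{i-1} and $i$ across the partition (this is precisely Step 1 of the paper's proof), and identify the resulting permanent cycle with \w{g_{n+1}} for dimension reasons. The gap is in the \w{d_0} computation. You assert that after expanding the lower \w{\gam{m}}'s and invoking the Jacobi identity, \w{d_0\gam{n+1}} ``reduces to a combination of the expressions \w{d_0\gam{m}} with \w[,]{m\leq n} each zero by induction.'' That is not the mechanism of cancellation, and the inductive hypothesis \w{d_0\gam{m}=0} is of no direct use here. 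After expansion, \w{d_0\gam{n+1}} is a sum of blocks \w{[s_L\gam{p+q-3},\,s_K\iot{r}]} (one for each summand of \eqref{eqgamman}), each of which expands into triple Whitehead products \w[.]{[[s_I\iot{p},s_J\iot{q}],s_K\iot{r}]} No individual block vanishes, and the sum of blocks does not regroup into degeneracies of the expressions \w{d_0\gam{m}} (Whitehead-multiplying such an expression by anything produces four-fold products, which do not occur). What actually happens is that the three members $A$, $B$, $C$ of each Jacobi triple \eqref{eqjacobiabc} arise from three \emph{different} blocks, i.e.\ from the expansions of the three different classes \w[,]{\gam{p+q-3}} \w{\gam{q+r-3}} and \w[:]{\gam{p+r-3}} already for \w{\gam{4}} the surviving triple mixes terms coming from \w{\gam{3}} and from two degeneracies of \w[.]{\gam{2}} The cancellation is therefore irreducibly global and cannot be delegated to the induction hypothesis.

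Consequently the real content of the proof, which you have compressed into one sentence, is the verification that each of $A$, $B$, $C$ occurs exactly once in the expansion and that its sign, after multiplication by the Jacobi coefficients \w[,]{(-1)^{pr}} \w[,]{(-1)^{qp}} \w[,]{(-1)^{rq}} equals the single quantity \w{\Theta=\sgn(N,L)\cdot\sgn(L,M)\cdot\sgn(M,N)} of \eqref{eqtheta}. This occupies Steps 3--6 of the paper's proof together with the closing Remark treating the remaining orderings of \w[,]{p+q-1} \w[,]{q+r-1} \w{p+r-1} relative to \w[,]{p,q,r} including the boundary cases where the convention \w{0\in I} of Notation \ref{ncpns} intervenes. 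Your handling of the one summand that is not a triple product, via \w{[\iot{2},\iot{2}]=2\eta_2} as in \eqref{eqhopfwh}, is correct and matches Step 2, and your closing paragraph rightly flags the sign bookkeeping as the main obstacle; but until that bookkeeping is actually carried out, the claim \w{d_0\gam{n+1}=0} remains unproved.
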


We shall refer to \w{(-1)\sp{n\cdot j}} as the \emph{global coefficient} of
\w[.]{[s\sb{I}\iot{n-j+3},\ s\sb{J}\iot{j}]}

\begin{proof}
All iterated Whitehead products in \w{\pi\sb{i}\bW\sb{j}} are degenerate for
\w[,]{i\leq j>n} and \w{\gam{n}} is the first non-trivial class that
does not come from \w[.]{\CP{n-1}} By Proposition \ref{pnoratho}, it thus suffices
to show that \w{\gam{n}} is a Moore cycle in the integral homotopy spectral sequence
for \w[\vsm.]{\Wd}

\noindent\textbf{Step 1:}\hs If \w{J:=\Phi(I)} and \w[,]{r\geq 1} then
\w{d\sb{r}([s\sb{I}\iot{n-j+3},s\sb{J}\iot{j}]=0} unless one of \w{\{r,r+1\}} is in
$I$ and one is in $J$. If \w{(I',J')} is then obtained from \w{(I,J)} by
interchanging $r$ and \w[,]{r+1}  \w{\sgn{I',J'}=-\sgn{I,J}} by Remark \ref{rcpns},
while \w[.]{d\sb{r}[s\sb{I}\iot{n-j+3},s\sb{J}\iot{j}]=
  d\sb{r}[s\sb{I'}\iot{n-j+3},s\sb{J'}\iot{j}]}

By our assumption in \S \ref{ncpns}, it remains to deal with
\w[,]{d\sb{1}([s\sb{I}\iot{k+2},s\sb{J}\iot{k+2}])} for \w{n=2k+1} and \w{|I|=|J|=k}
with \w[.]{0\in I} So
\w{I=(0=i\sb{1}<i\sb{2}\dotsc<i\sb{k})} and \w{J=(1=j\sb{1}<j\sb{2}\dotsc<j\sb{k})}
(the case \w{k=1} is dealt with in \wref[\textit{ff.}).]{eqgammathree}
If \w{(I',J')} is obtained from \w{(I,J)} by interchanging \w{i\sb{i}} with \w{j\sb{i}}
for all \w{2\leq i\leq k} (leaving \w{0\in I'} and \w[),]{1\in J'}
then \w[,]{\sgn{I,J}=(-1)\sp{(k-1)(k-1)}\sgn{I',J'}} while
$$
d\sb{1}([s\sb{I}\iot{k+2},s\sb{J}\iot{k+2}])~=~
(-1)\sp{(k+2)(k+2)}\cdot d\sb{1}([s\sb{I'}\iot{k+2},s\sb{J'}\iot{k+2}])
$$
\noindent by \cite[(7.5)]{GWhE}, so the two appear in \w{d\sb{1}\gam{n}} with
opposite signs. Thus we see that \w{d\sb{r}\gam{n}=0} for all \w[\vsm .]{1\leq r\leq n}

\noindent\textbf{Step 2:}\hs By the Jacobi identity for any \w{n\geq 2}
and iterated degeneracy \w{s\sb{I}} we have
\w[,]{[[\iota\sb{n},s\sb{I}\iota\sb{2}],s\sb{I}\iota\sb{2}]+
[[s\sb{I}\iota\sb{2},s\sb{I}\iota\sb{2}],\iota\sb{n}]+
[[s\sb{I}\iota\sb{2},\iota\sb{n}],s\sb{I}\iota\sb{2}]=0} so
\begin{equation*}
2[[\iota\sb{n},s\sb{I}\iota\sb{2}],s\sb{I}\iota\sb{2}]~=~
-[s\sb{I}[\iota\sb{2},\iota\sb{2}],\iota\sb{n}]~=~
-2[s\sb{I}\eta\sb{2}\sp{\#}\iota\sb{2},\iota\sb{n}]~=~
(-1)\sp{n+1}2[\iota\sb{n},s\sb{I}\eta\sb{2}\sp{\#}\iota\sb{2}]
\end{equation*}
\noindent and thus
\begin{myeq}\label{eqhopfwh}
[[\iota\sb{n},s\sb{I}\iota\sb{2}],s\sb{I}\iota\sb{2}]~=~
  (-1)\sp{n+1}[\iota\sb{n},s\sb{I}\eta\sb{2}\sp{\#}\iota\sb{2}]
\end{myeq}
\noindent (since all summands in Hilton's Theorem in this dimension are
infinite cyclic). Thus
\begin{myeq}\label{eqthreewh}
d\sb{0}([s\sb{0}\iot{n},s\sb{J}\iot{3}])=[\iot{n},s\sb{J'}\gam{1}]
=[\iot{n},s\sb{J'}\eta\sb{2}\sp{\#}\iot{2}]
\end{myeq}
\noindent (where \w{J=\{1,\dotsc,n-2)} and \w[)]{J'=\{0,\dotsc,n-3\}} \wwh
the only summand of \w{d\sb{0}\gam{n}} not a three-fold Whitehead product \wh
equals the first summand of
\w{d\sb{0}([\iot{n+1},s\sb{J''}\iot{2}])=[\gam{n-1},s\sb{J'}\iot{2}]}
for \w{J''=J'\cup\{n-2\}} \wwh i.e.,
\w[,]{[\iot{n},s\sb{J'}\iot{2}],s\sb{J'}\iot{2}]} with opposite sign
(due to the sign in \wref{eqhopfwh} and the coefficient \w{(-1)\sp{n\cdot j}}
in \wref[,]{eqgamman} where \w{j=3} in \wref[)\vsm.]{eqthreewh}

\noindent\textbf{Step 3:}\hs To show that \w{\gam{n}} is a Moore cycle, consider
triple Whitehead products summands of \w{d\sb{0}\gam{n}} of the form
\begin{myeq}\label{eqA}
A~=~[[s\sb{I}\iot{p},\,s\sb{J}\iot{q}],\,s\sb{K}\iot{r}]]~,
\end{myeq}
\noindent associated to a partition \w[,]{\{0,1,\dotsc,n-2\}=L\sqcup M\sqcup N}
with cardinalities \w[,]{|N|=q-2} \w[,]{|M|=p-2} and \w{|L|=r-2} (so \w[),]{p+q+r=n+4}
such that \w[,]{I=L\sqcup N} \w[,]{J=L\sqcup M} and \w{K=M\sqcup N}
(and thus \w[,]{|I|=n-p} \w[,]{|J|=n-q} and \w[).]{|K|=n-r} By \wref{eqgamman} and \S
\ref{ncpns} we must assume \w{p+q-1\geq r} and \w[,]{p\geq q} and if \w[,]{p=q}
then \w[.]{\min(N)<\min(M)}
Given $A$, set
\begin{myeq}\label{eqBC}
B~=~[[s\sb{J}\iot{q},s\sb{K}\iot{r}],s\sb{I}\iot{p}]\hs\text{and}\hs
C~=~[[s\sb{K}\iot{r},s\sb{I}\iot{p}],s\sb{J}\iot{q}]~,
\end{myeq}
\noindent with
\begin{myeq}\label{eqjacobiabc}
  (-1)\sp{pr}A~+~(-1)\sp{qp}B~+~(-1)\sp{rq}C~=~0
\end{myeq}
\noindent by the Jacobi identity (see \cite[X, (7.14)]{GWhE}).
Thus to show that \w[,]{d\sb{0}\gam{n}=0} we must show that each of
$A$, $B$, and $C$ appears exactly once in the expansion of \w[,]{d\sb{0}\gam{n}}
with the appropriate sign.

By \wref[,]{eqgamman} $A$ only appears in
\w[,]{d\sb{0}([s\sb{\hL}\iot{p+q-1},s\sb{\hhK}\iot{r}])=
  [s\sb{L}d\sb{0}\gam{p+q-3},s\sb{K}\iot{r}]}
corresponding to \w{[s\sb{N}\iot{p},\,s\sb{M}\iot{q}]}
in \w[,]{\gam{p+q-3}} with $\hL$ obtained from $L$ by adding one to each index, and
$\hhK$ obtained from $K$ by adding one to each index and adjoining $0$, so
\begin{myeq}\label{eqhlhk}
  \sgn{\hhK,\hL}~=~\sgn{K,L}\hs\text{and}\hs
  \sgn{\hL,\hhK}~=~(-1)\sp{r}\sgn{L,K}~.
\end{myeq}

By \wref[,]{eqdegens} \w{s\sb{I}=s\sb{L}\circ s\sb{N}}
and \w[,]{s\sb{J}=s\sb{L}\circ s\sb{M}} and by Definition \ref{dsign}(ii)-(iii):
\begin{myeq}\label{eqlmna}
\sgn{I,J}=\sgn{N,M},\  \sgn{J,K}=\sgn{L,N},\ \sgn{I,K}=\sgn{L,M}.
\end{myeq}
\noindent Finally, by \wref{eqsgndisunl} we have
\begin{myeq}\label{eqlmnb}
\begin{split}
  \sgn{L,K}~=&~\sgn{L,M\sqcup N}~=~\sgn{L,M}\cdot\sgn{M,N}\cdot\sgn{J,N}\\
  =&~\sgn{L,N\sqcup M}~=~\sgn{L,N}\cdot\sgn{N,M}\cdot\sgn{I,M}~.
\end{split}
\end{myeq}

\noindent\textbf{Step 4:}\hs We see that by \wref[,]{eqhlhk} the coefficient of
\w{[s\sb{\hL}\iot{p+q-1},s\sb{\hK}\iot{r}]} in \w{\gam{n}} is
\begin{equation*}
(-1)\sp{nr}\sgn{\hL,\hhK}=(-1)\sp{nr+r}\sgn{L,K}=
  (-1)\sp{(p+q)r}\sgn{L,M}\sgn{M,N}\sgn{J,N}
\end{equation*}
\noindent while the sign of \w{[s\sb{N}\iot{p},\,s\sb{M}\iot{q}]} in \w{\gam{p+q-3}} is
\begin{equation*}
(-1)\sp{(p+q-3)q}\sgn{N,M}=(-1)\sp{pq}\sgn{N,M}=\sgn{M,N}~.
\end{equation*}
\noindent Therefore, if we set
\begin{myeq}\label{eqtheta}
\Theta~:=~(-1)\sp{qr}\sgn{L,M}\cdot\sgn{J,N}~,
\end{myeq}
\noindent the sign of $A$ in \w{d\sb{0}(\gam{n})} is
\w[,]{(-1)\sp{pr}\cdot\Theta} so multiplication by \w{(-1)\sp{pr}} as in
\wref{eqjacobiabc} yields $\Theta$ as the ``corrected'' coefficient of $A$\vsm.

\noindent\textbf{Step 5:}\hs We now assume that \w{q>r} and \w{q+r-1>p}
(or \w{q+r-1=p} and \w[).]{0\not\in I}
We see that $B$ of \wref{eqBC} appears as a summand in
\w{d\sb{0}(\wB)=[s\sb{M}d\sb{0}(\iot{q+r-1},s\sb{I}\iot{p}])}
for the summand \w{\wB:=[s\sb{\hM}\iot{q+r-1},s\sb{\hhI}\iot{p}]} in
\wref[,]{eqgamman} which has coefficient
\begin{equation*}
\begin{split}
&(-1)\sp{np}\sgn{\hM,\hhI}~=~(-1)\sp{np+p}\sgn{M,I}~=~
  (-1)\sp{(q+r)p}\sgn{M,L\sqcup N}\\
  &=(-1)\sp{(q+r)p}\sgn{M,L}\sgn{L,N}\sgn{J,N}=
  (-1)\sp{pq}\sgn{L,M}\sgn{L,N}\sgn{J,N}
\end{split}
\end{equation*}
\noindent in \w{\gam{n}} by \wref{eqhlhk} and \wref[.]{eqsgndisunl}

On the other hand, the summand \w{[s\sb{N}\iot{q},s\sb{L}\iot{r}]}
has coefficient \w{(-1)\sp{(q+r-3)r}\sgn{L,N}} in \w[,]{\gam{q+r-3}} so altogether
the coefficient of $B$ in \w{d\sb{0}(\gam{n})} is
$$
(-1)\sp{pq+qr}\sgn{L,M}\sgn{J,N}~=~(-1)\sp{pq}\cdot\Theta~,
$$
\noindent so multiplication by \w{(-1)\sp{qp}} as in \wref{eqjacobiabc} yields
$\Theta$ as the corrected coefficient of $B$.

Note that if \w[,]{q+r-1<p} instead of $B$ we would have
\w{B':=[s\sb{I}\iot{p},[s\sb{J}\iot{q},s\sb{K}\iot{r}]]}
in \w{d\sb{0}(\wB')=[s\sb{I}\iot{p},s\sb{M}d\sb{0}(\iot{q+r-1}])}
for the summand \w{\wB':=[s\sb{\hhI}\iot{p},s\sb{\hM}\iot{q+r-1}]} of
\wref[.]{eqgamman} By \cite[(7.5)]{GWhE} \w[,]{B'=(-1)\sp{p(q+r-1)}B} while
the global coefficient \w{(-1)\sp{n(q+r-1)}} of \w{\wB'} agrees with
\w{(-1)\sp{np}} for $\wB$, since \w[.]{(-1)\sp{n(n-1))}=+1}
On the other hand, by \wref{eqsgncompl}
$$
\sgn{\hhI,\hM,}~=~(-1)\sp{(p-2)(q+r-5)}\sgn{\hM,\hhI}~=~(-1)\sp{p(q+r-1)}\sgn{\hM,\hhI}~,
$$
\noindent so if we replace \w{B'} by $B$ the total sign is unchanged\vsm.

\noindent\textbf{Step 6:}\hs Since \w[,]{r<p} $C$ of \wref{eqBC} must be
replaced by \w{C':=[[s\sb{I}\iot{p},s\sb{K}\iot{r}],s\sb{J}\iot{q}]}
(assuming \w[),]{p+r-1\geq q} with \w{C'=(-1)\sp{pr}C} by \cite[(7.5)]{GWhE}.

As above, \w{C'} appears as a summand in
\w{d\sb{0}(\wC')=[s\sb{N}d\sb{0}(\iot{p+r-1}),s\sb{J}\iot{q}])}
for \w{\wC':=[s\sb{\hN}\iot{p+r-1},s\sb{\hhJ}\iot{q}]} in
\wref[,]{eqgamman} with coefficient
\begin{equation*}
  (-1)\sp{nq}\sgn{\hN,\hhJ}~=~(-1)\sp{nq}(-1)\sp{(n-q+1)q}\sgn{\hhJ,\hN}~=~\sgn{J,N}
\end{equation*}
\noindent in \w[,]{\gam{n}} by \wref{eqhlhk} and \wref[.]{eqsgncompl}

On the other hand, the summand \w{[s\sb{I}\iot{p},s\sb{K}\iot{r}]} has coefficient
$$
(-1)\sp{(p+r-3)p}\sgn{I,K}~=~(-1)\sp{pr}\sgn{L,M}
$$
\noindent in \w[,]{\gam{q+r-3}} by \wref[.]{eqlmna}
Thus the coefficient of \w{C'} in \w{d\sb{0}(\gam{n})} is
\w[,]{(-1)\sp{(p+q)r}\cdot\Theta} which means that of $C$ is
\w[,]{(-1)\sp{qr}\cdot\Theta}
and thus multiplication by \w{(-1)\sp{qr}} again yields $\Theta$
as the corrected coefficient of $C$.
This shows that \wref{eqjacobiabc} indeed holds, so these three terms in the
boundary sum to $0$. Thus \w{\gam{n}} is a Moore cycle, as required.
\end{proof}

\begin{example}\label{eggammas}
The next cycles after \w{\gam{3}} of \wref{eqgammathree} are
$$
\gam{4}=[\iot{5},s\sb{2}s\sb{1}s\sb{0}\iot{2}]+[s\sb{0}\iot{4},s\sb{2}s\sb{1}\iot{3}]
-[s\sb{1}\iot{4},s\sb{2}s\sb{0}\iot{3}] +[s\sb{2}\iot{4},s\sb{1}s\sb{0}\iot{3}]
$$
in \w[,]{\pi\sb{6}\bW\sb{3}} with
\begin{equation*}
\begin{split}
d\sb{0}(\gam{4})=&[[\iot{4},s\sb{1}s\sb{0}\iot{2}],s\sb{1}s\sb{0}\iot{2}]
-[[s\sb{0}\iot{3},s\sb{1}\iot{3}],s\sb{1}s\sb{0}\iot{2}]
+[\iot{4},s\sb{1}s\sb{0}\eta\sb{2}\sp{\#}\iot{2}]\\
&-[[s\sb{0}\iot{3},s\sb{1}s\sb{0}\iot{2}],s\sb{1}\iot{3}]
+[[s\sb{1}\iot{3},s\sb{1}s\sb{0}\iot{2}],s\sb{0}\iot{3}]~,
\end{split}
\end{equation*}
\noindent which vanishes by combining \wref{eqhopfwh} with \wref[,]{eqjacobiabc} and
\begin{equation*}
\begin{split}
\gam{5}=&~[\iot{6},s\sb{3}s\sb{2}s\sb{1}s\sb{0}\iot{2}]
-[s\sb{0}\iot{5},s\sb{3}s\sb{2}s\sb{1}\iot{3}]
+[s\sb{1}\iot{5},s\sb{3}s\sb{2}s\sb{0}\iot{3}]
-[s\sb{2}\iot{5},s\sb{3}s\sb{1}s\sb{0}\iot{3}]\\
&+[s\sb{3}\iot{5},s\sb{2}s\sb{1}s\sb{0}\iot{3}]
+[s\sb{1}s\sb{0}\iot{4},s\sb{3}s\sb{2}\iot{4}]
-[s\sb{2}s\sb{0}\iot{4},s\sb{3}s\sb{1}\iot{4}]
+[s\sb{3}s\sb{0}\iot{4},s\sb{2}s\sb{1}\iot{4}]~.
\end{split}
\end{equation*}
\end{example}

\begin{remark}\label{rgammas}
By Proposition \ref{pnoratho} implies that one can construct an integral simplicial
resolution \w{\Wd} of \w{\CP{n}} such that any element
\w{[\alpha]\in E\sp{2}\sb{r,s}} of infinite order in the spectral sequence
has a multiple which is a permanent cycle. We hope to show in \cite{BBSenA} that
the description in Proposition \ref{pcpn} is in fact valid integrally.
\end{remark}

\end{document}